\begin{document}

\title{Affine reductive spaces of small dimension  and 
left A-loops}
\author{\'Agota Figula} 
\date{}
\maketitle
\newtheorem{Theo}{Theorem}
\newtheorem{Lemma}[Theo]{Lemma}
\newtheorem{Prop}[Theo]{Proposition}
\newcommand{\Rem}{\noindent{\em Remark. }}

\noindent
{\footnotesize {2000 {\em Mathematics Subject Classification:} 53C30, 20N05, 22A30.}}

\noindent
{\footnotesize {{\em Key words and phrases:} affine reductive spaces, almost differentiable left A-loops, differentiable sections in Lie groups. }} 

\noindent
{\footnotesize {{\em Thanks: } This paper was supported by DAAD}}

\begin{abstract} In this paper we determine the at least $4$-dimensional 
affine reductive homogeneous 
manifolds 
for an at most $9$-dimensional simple Lie group  
or an at most $6$-dimensional semi-simple Lie group. 
Those reductive spaces among them which admit a sharply transitive 
differentiable section yield local almost differentiable left A-loops. 
Using this we classify all global almost differentiable left A-loops $L$ 
having either a $6$-dimensional semi-simple Lie group or the group 
$SL_3(\mathbb R)$ as the group topologically generated by their left 
translations. Moreover,  we determine 
all at most $5$-dimensional left A-loops $L$ with 
 $PSU_3(\mathbb C,1)$ as the group 
topologically generated by their left translations. 
\end{abstract}

\section{ Introduction}

The affine reductive spaces are 
essential objects of differential geometry (cf. \cite{helgason}, 
\cite{oneill}, \cite{kobayashi}). They are homogeneous manifolds $G/H$ 
such that there exists an 
$Ad(H)$-invariant 
subspace ${\bf m}$ of the Lie algebra ${\bf g}$ of $G$ that is complementary 
to the subalgebra ${\bf h}$ in  ${\bf g}$. 

\smallskip
\noindent
The explicite knowledge of affine reductive spaces plays an important 
role in many investigations (cf. \cite{wolf}, \cite{atri}, \cite{kowalski}).
This paper is an application to  differentiable loops since the affine 
reductive spaces are the key for the classification of almost 
differentiable left A-loops $L$; these are loops in which any mapping 
$x \mapsto [(a b)^{-1}(a (b x))], a, b \in L$ is an automorphism of  $L$. 
The relations 
between them  and reductive homogeneous 
spaces  are explicitly discussed  
in \cite{kikkawa1}, \cite{kikkawa2} and \cite{loops}.  

\smallskip
\noindent
Using the fact that the groups topologically generated by the left 
translations of almost differentiable left A-loops $L$ are Lie groups 
(cf. \cite{quasigroups}),    
we treat  $L$ as images of global 
differentiable 
sections $\sigma :G/H \to G$, where $G$ is a connected Lie 
group, $H$ is a 
closed subgroup containing no non-trivial normal subgroup of $G$ 
such that  the subset $\sigma (G/H)$ is invariant 
under the conjugation with the elements of $H$. 
Since the tangent space $T_1(\sigma (G/H))$ is a complementary reductive 
subspace to the Lie algebra ${\bf h}$ of $H$ the  affine 
reductive spaces are  crucial for the classification of 
almost differentiable 
left A-loops. 

\smallskip
\noindent
In contrast to the compact connected Lie groups in which for any 
connected closed subgroup there is an reductive complement 
(cf. \cite{kobayashi}, p. 199), for non-compact Lie groups the situation is 
complicated already if they have small dimension. This is documented by 
 Section 3 and Proposition \ref{reductiveslg2}, where we determine  
all  at least $4$-dimensional affine 
reductive homogeneous spaces $({\bf g}, {\bf h}, {\bf m})$, such that 
${\bf g}$ is either an at 
most $9$-dimensional simple Lie algebra or it is isomorphic to 
$\mathfrak{sl_2}(\mathbb R) \oplus {\bf g}_2$, where ${\bf g}_2$ 
is a $3$-dimensional simple Lie algebra. 

\smallskip
\noindent
The exponential images $\exp {\bf m}$  of  reductive 
complements ${\bf m}$ of the triples  $({\bf g}, {\bf h}, {\bf m})$ 
obtained in Section 3 and in Proposition \ref{reductiveslg2}  yield local left A-loops. 
In Section 4 and Proposition 21 we discuss which of these left A-loops can 
be extended to global ones. They are precisely those exponential images 
$\exp {\bf m}$ which form 
systems of representatives for the cosets $\{xH\ | \ x \in G \}$ in $G$ 
and   do  not contain any element conjugate to an element 
of $H$.  
  
\smallskip
\noindent
Since differentiable Bruck loops have 
realizations on differentiable 
affine symmetric spaces $G/H$, where $H$ is the set of fixed elements of an 
involutory automorphism of $G$ and 
$\sigma (G/H)$ is the exponential image of the $(-1)$-eigenspace of the 
corresponding   automorphism of the Lie algebra ${\bf g}$ of $G$,  the 
class of differentiable 
Bruck loops form a proper 
subclass of almost differentiable left A-loops. An important subclass of 
Bruck loops are the 
Bruck 
loops of hyperbolic type which correspond to Lie groups $G$ and 
involutions $\tau $ fixing 
elementwise a maximal compact subgroup of $G$ (cf. \cite{freudenthal}, 64.9, 
64.10). Almost differentiable left A-loops $L$ having dimension at most 
$3$ and semi-simple Lie groups as the groups  
topologically generated by their  left translations are  classified 
in \cite{loops}, Section 27 and in 
\cite{figula3}. 
Hence in the following main result of this paper only at most 
$4$-dimensional almost differentiable left A-loops occur. 

\medskip
\noindent
{\it {\bf Theorem} Let $L$ be a connected almost differentiable 
left A-loop such that $\hbox{dim} \ L \ge 4$ 
and the group topologically generated by the left translations of $L$ 
is semi-simple. 

\smallskip
\noindent
If $\hbox{dim} \ G=6$ then $G$ is isomorphic to 
$PSL_2(\mathbb R) \times G_2$, where $G_2$ is 
either  $PSL_2(\mathbb R)$ or  $SO_3(\mathbb R)$ and the loop $L$ 
is either a Scheerer extension of 
$G_2$  by the hyperbolic plane loop $\mathbb H_2$ 
(cf. \cite{loops}, Section 22) or the direct product 
$\mathbb H_2 \times \mathbb H_2$. 

\smallskip
\noindent
If the group $G$ is simple and $7 \le \hbox{dim} \ G \le 9$  then $G$ 
is isomorphic either to 
$SL_3(\mathbb R)$ or to 
$PSU_3(\mathbb C,1)$.  In the first case  $L$ is  the 
$5$-dimensional Bruck loop of 
hyperbolic type having the group $SO_3(\mathbb R)$ as the stabilizer 
of $e \in L$ 
(cf. \cite{figula2}, p. 12). In the case 
$G \cong PSU_3(\mathbb C,1)$  every loop $L$ with $\hbox{dim}\ L<6$ 
is  the complex hyperbolic 
plane loop $L_0$ having the group 
$Spin_3 \times SO_2(\mathbb R) /\langle (-1,-1) \rangle $ as the 
stabilizer of $e \in L_0$ (cf. \cite{figula2}, p. 9). }

\section{ Some basic notions}  

\noindent
A binary system  $(L, \cdot )$ is called a loop if there exists an element 
$e \in L$ such that $x=e \cdot x=x \cdot e$ holds for all $x \in L$ and the 
equations 
$a \cdot y=b$ and $x \cdot a=b$ have precisely one solution which we denote 
by $y=a \backslash b$ and $x=b/a$.  Let 
$(L_1, \cdot )$ and $(L_2, \ast )$ be two loops. The set  
$L=L_1 \times L_2= \{ (a,b) \ |\ a \in L_1, b \in L_2 \}$ with the 
componentwise multiplication  is again a loop, which is called the direct 
product of $L_1$ and 
$L_2$, and the loops $(L_1, \cdot )$,  $(L_2, \ast )$ are subloops of $L$.   

\smallskip
\noindent
A loop is called a left A-loop if each mapping 
$\lambda _{x,y}=\lambda _{xy}^{-1} \lambda _x \lambda _y:L \to L$ is an 
automorphism of $L$.

\smallskip
\noindent
Let $G$ be the group generated by the left 
translations of $L$ and let $H$ be the stabilizer of $e \in L$ in the group 
$G$. 
The left translations of $L$ form a subset of $G$ acting on the cosets 
$\{x H; x \in G\}$ such that for any given cosets $aH$ and $bH$ there exists 
precisely one left translation $ \lambda _z$ with $ \lambda _z a H=b H$. 

\smallskip
\noindent
Conversely,  let $G$ be a group, H be a subgroup containing no normal 
non-trivial subgroup of $G$ and $\sigma : G/H \to G$ be a section with 
$\sigma (H)=1 \in G$ 
such that the set $\sigma (G/H)$ of 
representatives for the left cosets $\{ x H, x \in G \}$  
 acts sharply transitively on the space $G/H$ of  $\{x H, x \in G\}$ (cf. \cite{loops}, 
p. 18). Such a section we call a sharply transitive section. 
Then the multiplication  defined by 
$x H \ast y H=\sigma (x H) y H$ on the factor space $G/H$ or by $x \ast y=\sigma(xyH)$ on 
$\sigma (G/H)$ yields a loop $L(\sigma )$. The group $G$ is isomorphic to the group 
generated by the left translations of $L(\sigma )$. 

\smallskip
\noindent
If $G$ is a Lie group and $\sigma $ is a differentiable section 
satisfying the above conditions then the loop $L(\sigma )$ is almost 
differentiable. 
This loop is a left A-loop if and only if the subset 
$\sigma (G/H)$ is invariant under the conjugation with the elements of $H$.
Moreover the manifold $L$ is parallelizable since the set of the left 
translations is 
sharply transitive. 

\smallskip
\noindent
Let $L_1$ be a loop defined on the factor space $G_1/H_1$ with respect to a 
section 
$\sigma _1:G_1/H_1 \to G_1$ the image of which is the set $M_1 \subset G_1$. 
Let $G_2$ be a  group, let $\varphi :H_1 \to G_2$ be a homomorphism and 
$(H_1, \varphi (H_1))=\{(x, \varphi (x)); x \in H_1\}$. A 
loop $L$ is called a Scheerer extension of 
$G_2$ by $L_1$ if the loop $L$ is defined on the factor space 
 $(G_1 \times G_2)/(H_1, \varphi (H_1))$   
with respect to the section 
$\sigma:(G_1 \times G_2)/(H_1, \varphi (H_1)) \to G_1 \times G_2$ the image of which is 
the set $M_1 \times G_2$. 

\smallskip
\noindent
If $L$ is a connected almost differentiable left A-loop, then the group $G$ 
topologically generated by the left translations of $L$ within the group of 
autohomeomorphisms is
 a connected Lie group (cf. \cite{quasigroups}; \cite{loops}, Proposition 5.20. p. 75), 
and we may describe $L$ by a differentiable section. 

\smallskip
\noindent
Let $L$ be a connected almost differentiable left A-loop. Let $G$ be the Lie 
group topologically generated by the left translations of $L$, and let 
$(\bf{g, [.,.]}) $ be the Lie algebra of 
$G$. Denote by $\bf{h}$ the Lie algebra of the stabilizer $H$ of 
the identity 
$e \in L$ in 
$G$ and by ${\bf m}=T_1 \sigma (G/H)$ the tangent space at $1 \in G$ of 
the image of the section $\sigma :G/H \to G$ corresponding to 
 $L$. Then ${\bf m}$ generates ${\bf g}$ and the homogeneous space $G/H$ is 
reductive, i.e.
we have $\bf{g}=\bf{m} \oplus \bf{h}$ and 
$[{\bf h}, {\bf m}] \subseteq {\bf m}$. 
(cf. \cite{loops}, Proposition 5.20. p. 75)  
If $[{\bf m}, {\bf m}] \subseteq {\bf h}$ then the factor space $G/H$ is 
an affine symmetric 
space (\cite{loos})  and the corresponding loop $L$ is called a Bruck loop.

\medskip
\noindent
In our computation we often use the following facts about the Lie algebras 
$\mathfrak{sl_2}(\mathbb R)$ and $\mathfrak{so_3}(\mathbb R)$. 

\medskip
\noindent
As a real basis of $\mathfrak{sl_2}(\mathbb R)$ we choose the following 

\medskip
\noindent
$(\ast )$ \centerline{$e_1=\left (\begin{array}{cc} 
1 & 0 \\ 
0 & -1 \end{array} \right )$, $e_2=\left (\begin{array}{cc} 
0 & 1 \\
1 & 0 \end{array} \right )$, $e_3=\left (\begin{array}{cc}
0 & 1 \\
-1 & 0 \end{array} \right )$} 
(cf. \cite{hilgert}, pp. 19-20).  

\medskip
\noindent
With respect to this basis  the Lie algebra multiplication is given by: 

\medskip
\noindent
\centerline{$[e_1,e_2]=2 e_3$, \ $[e_1,e_3]=2 e_2$, \ $[e_3,e_2]=2 e_1$. }

\medskip
\noindent
{\bf 1.1} An element $X=\lambda _1 e_1+ \lambda _2 e_2+ \lambda _3 e_3 
\in \mathfrak{sl_2}(\mathbb R)$ is elliptic, parabolic or 
hyperbolic according whether  

\medskip
\noindent
\centerline{$k(X)=k(X,X)=\lambda _1^2+\lambda _2^2-\lambda _3^2$\ \  is smaller, 
equal, or greater $0$. }

\medskip
\noindent
The basis elements $e_1,e_2$ are hyperbolic, $e_3$ 
is elliptic and the elements $e_2+e_3$, $e_1+e_3$ are both parabolic. 
All elliptic elements, all hyperbolic elements as well as all parabolic 
elements of $\mathfrak{sl_2}(\mathbb R)$ are  conjugate in this order to 
$e_3$, to $e_1$ respectively  to $e_2+e_3$ (cf. \cite{hilgert}, p. 23). 
There are $3$ conjugacy classes of the one dimensional subgroups of $PSL_2(\mathbb R)$. As  
representatives of these classes  we can  choose 
$ \exp e_3$, $\exp e_1$, $ \exp e_2+e_3$. 
There is precisely one conjugacy class $\mathcal{C}$ of the two dimensional 
subgroups of $PSL_2(\mathbb R)$, as a representative of $\mathcal{C}$ we 
choose 

\medskip
\noindent
\centerline{${\mathcal L}_2= \left \{ \left ( \begin{array}{cc}
a & b \\
0 & a^{-1} \end{array} \right );\ a>0, b \in \mathbb R \right \}$. }
 
\medskip
\noindent
The Lie algebra of ${\mathcal L}_2$ is generated by the elements $e_1$, 
$e_2+e_3$. 

\medskip
\noindent
According to \cite{hilgert} for the exponential function  
$\exp :sl_2(\mathbb R) \to SL_2(\mathbb R)$ we have 
\[ \exp\ X=C(k(X))\ I+S(k(X))\ X. \]   
Here is 
\begin{equation} C(x)= \begin{array}{c}
\cosh \sqrt{x} \quad \hbox{for} \ \ \ 0 \le x, \\
\cos \sqrt{-x} \quad \hbox{for} \ \ \ 0 > x,  \end{array}  \quad 
\sqrt{ | x |} \ S(x)=  \begin{array}{c}
\sinh \sqrt{x} \quad \hbox{for} \ \ \ 0 \le x, \\
\sin \sqrt{-x} \quad \hbox{for} \ \ \ 0 > x.  \end{array} \nonumber \end{equation}  

\medskip
\noindent
{\bf 1.2} As a real basis of the Lie algebra 
$\mathfrak{so_3}(\mathbb R) \cong \mathfrak{su_2}(\mathbb C)$ we can choose 
the basis elements $\{\hbox{i} e_1, \hbox{i} e_2, e_3\}$, where 
$\hbox{i}^2=-1$. 
Every element of $\mathfrak{so_3}(\mathbb R)$ is conjugate to $e_3$. 
\newline
If $X \in \mathfrak{so_3}(\mathbb R)$ has the decomposition 

\medskip
\noindent
\centerline{$X=\lambda _1 \hbox{i} e_1+\lambda _2 \hbox{i} e_2+\lambda _3 e_3$}

\medskip
\noindent
then the normalized real 
Cartan-Killing form $k: \mathfrak{so_3}(\mathbb R) \times \mathfrak{so_3}(\mathbb R) \to 
\mathbb R$; $k(X,Y)=\frac{1}{8} \hbox{trace} (\hbox{ad} X \ \hbox{ad}  Y)$ 
satisfies 

\medskip
\noindent
\centerline{$k(X)=k(X,X)=-\lambda _1^2- \lambda _2^2- \lambda _3^2$.} 

\medskip
\noindent
For the exponential function $\exp : \mathfrak{su_2}(\mathbb C) \to SU_2(\mathbb C)$ one has 
\[ \exp\ X=C(k(X))\ I+S(k(X))\ X, \]   
where $C(x)= \cosh (\sqrt{-x} \hbox{i})$ and 
$S(x)=\displaystyle \frac{\sinh (\sqrt{-x} \hbox{i})}{\sqrt{-x} \hbox{i}}$.

\begin{Prop} \label{kompakt} There is no connected almost differentiable  
left A-loop $L$ such that the 
group $G$ topologically generated by 
its left translations is a  compact quasi-simple Lie group $G$ with 
$\hbox{dim} \ G \le 9$. 
\end{Prop}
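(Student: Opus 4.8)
\medskip\noindent
The plan is to argue by contradiction. Suppose such a loop $L$ exists; let $G$ be the Lie group topologically generated by its left translations, let $H$ be the stabilizer of $e\in L$ in $G$, and let $\mathbf g=\mathbf m\oplus\mathbf h$ be the associated reductive decomposition (cf.\ Section 2). First I list the candidates for $G$: the only compact simple Lie algebras of dimension at most $9$ are $\mathfrak{su_2}(\mathbb C)\cong\mathfrak{so_3}(\mathbb R)$, of dimension $3$, and $\mathfrak{su_3}(\mathbb C)$, of dimension $8$ — the next one, $\mathfrak{so_5}(\mathbb R)$, already has dimension $10$. Hence, up to isomorphism, $G$ is $SU_2(\mathbb C)$, $SO_3(\mathbb R)$, $SU_3(\mathbb C)$ or $PSU_3(\mathbb C)$. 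Since $L$ is a connected almost differentiable left A-loop, by Section 2 it is realized on $G/H$ by a differentiable sharply transitive section $\sigma\colon G/H\to G$ with $\sigma(eH)=1$; in particular the projection $p\colon G\to G/H$ admits a continuous section. Although — in contrast to the non-compact case — a reductive complement $\mathbf m$ always exists here, I will show that such a section cannot exist, for any admissible $H$, and this proves the proposition.

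\medskip\noindent
I first reduce to connected $H$: if $H$ were disconnected, composing $\sigma$ with $G\to G/H^{0}$ would produce a continuous section of the finite covering $G/H^{0}\to G/H$, forcing $H=H^{0}$; in particular no finite non-trivial $H$ occurs. Also $H\neq G$, since $H$ contains no non-trivial normal subgroup of the quasi-simple group $G$, and $H\neq\{1\}$, the case $H=\{1\}$ — in which $L$ would be the group $G$ itself — being excluded. So it remains to run through the conjugacy classes of proper connected closed subgroups $H$ of $G$ containing no non-trivial normal subgroup of $G$. For $G=SU_2(\mathbb C)$ every one-parameter subgroup contains the centre $\{\pm1\}$, so no such $H$ exists; for $G=SO_3(\mathbb R)$ the only possibility is the one-parameter subgroup $SO_2(\mathbb R)$, with $G/H=S^{2}$. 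For $\mathbf g=\mathfrak{su_3}(\mathbb C)$ the proper connected closed subgroups are, up to conjugacy, the one-parameter subgroups, the maximal torus $T^{2}$, the block subgroup isomorphic to $SU_2(\mathbb C)$, the principal $SO_3(\mathbb R)$, and $S(U_2\times U_1)\cong U_2$ — there is no connected subgroup of dimension $5$, $6$ or $7$, because $U_2$ and $SO_3(\mathbb R)$ are maximal in $SU_3(\mathbb C)$; of these, $T^{2}$, $U_2$ and some one-parameter subgroups contain the centre $\mathbb Z_3$ of $SU_3(\mathbb C)$ and occur as admissible stabilizers only for $G=PSU_3(\mathbb C)$.

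\medskip\noindent
The main point is the following. With its right $H$-action, $p\colon G\to G/H$ is a principal $H$-bundle, hence it admits a continuous section only if it is trivial, i.e.\ only if $G$ is homeomorphic to $(G/H)\times H$; then $\pi_1(G)\cong\pi_1(G/H)\times\pi_1(H)$, so the finite abelian group $\pi_1(G)\in\{1,\mathbb Z_2,\mathbb Z_3\}$ would have $\pi_1(H)$ as a direct factor. But $\pi_1(H)$ is infinite for a one-parameter subgroup, for $T^{2}$ and for $U_2$, while $\pi_1(SO_3(\mathbb R))=\mathbb Z_2$, which is a direct factor neither of $\mathbb Z_3$ nor of the trivial group; in every one of these cases no section exists. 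The only admissible $H$ with $\pi_1(H)=0$ — where this argument is silent — is the block subgroup $H\cong SU_2(\mathbb C)$ of $SU_3(\mathbb C)$, respectively its isomorphic image in $PSU_3(\mathbb C)$.

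\medskip\noindent
This last case, which I expect to be the only genuinely delicate one, I would handle separately. For $G=SU_3(\mathbb C)$ one has $G/H=SU_3(\mathbb C)/SU_2(\mathbb C)\cong S^{5}$, and it suffices to show that the principal $SU_2(\mathbb C)$-bundle $SU_2(\mathbb C)\to SU_3(\mathbb C)\xrightarrow{\,p\,}S^{5}$ has no continuous section. In its homotopy exact sequence
\[\pi_5\bigl(SU_2(\mathbb C)\bigr)\longrightarrow\pi_5\bigl(SU_3(\mathbb C)\bigr)\xrightarrow{\;p_*\;}\pi_5\bigl(S^{5}\bigr)\xrightarrow{\;\partial\;}\pi_4\bigl(SU_2(\mathbb C)\bigr)\longrightarrow\pi_4\bigl(SU_3(\mathbb C)\bigr)\]
one has $\pi_5(SU_2(\mathbb C))=\pi_5(S^{3})=\mathbb Z_2$, $\pi_5(SU_3(\mathbb C))=\mathbb Z$, $\pi_4(SU_2(\mathbb C))=\mathbb Z_2$ and $\pi_4(SU_3(\mathbb C))=0$, so $p_*\colon\mathbb Z\to\mathbb Z$ is multiplication by $\pm2$ and in particular not surjective; a section $s$ would give $p_*s_*=\mathrm{id}$ on $\pi_5(S^{5})=\mathbb Z$, a contradiction. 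For $G=PSU_3(\mathbb C)$ the space $G/H$ is the lens space $S^{5}/\mathbb Z_3$, and $SU_3(\mathbb C)\to S^{5}$ is the pullback of $PSU_3(\mathbb C)\to S^{5}/\mathbb Z_3$ along the covering $S^{5}\to S^{5}/\mathbb Z_3$, so a section of the latter would pull back to one of the former — impossible. Collecting the cases, no loop $L$ with the asserted properties exists.
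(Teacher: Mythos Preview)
Your proof is correct, but the paper takes a much shorter route: it simply cites Scheerer's theorem \cite{scheerer}, which asserts that a compact connected quasi-simple Lie group $G$ admits a continuous section $G/H\to G$ over a proper coset space if and only if $G$ is locally isomorphic to $SO_8(\mathbb R)$; since $\dim SO_8(\mathbb R)=28>9$, the proposition follows at once. Your argument instead works out by hand what Scheerer's result says in dimensions $\le 9$: you list the admissible pairs $(G,H)$ and eliminate each one with elementary algebraic topology---the $\pi_1$ obstruction (which is exactly the compact case of the paper's own Lemma~\ref{fundamenta}) whenever $\pi_1(H)$ is not a direct factor of $\pi_1(G)$, and a direct computation in $\pi_4$ and $\pi_5$ for the one case this misses, namely the block $SU_2(\mathbb C)\subset SU_3(\mathbb C)$ and its image in $PSU_3(\mathbb C)$. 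What your approach buys is independence from an external reference and an explicit explanation of \emph{why} each section fails to exist; what the paper's citation buys is a one-line proof and a statement that is in fact valid in every dimension, the bound $\dim G\le 9$ being imposed only to match the scope of the paper rather than because the argument needs it.
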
 
\begin{proof} If $G$ is a  quasi-simple Lie group then it admits a continuous section if and only if $G$ is 
locally isomorphic to  $SO_8(\mathbb R)$ (cf. \cite{scheerer}, pp. 149-150). 
\end{proof}

\medskip
\noindent
An important tool to exclude certain stabilizers $H$ is the fundamental 
group $\pi _1$ of a connected topological 
space. This shows the following lemma which is proved in \cite{figula2}, p. 6.  

\begin{Lemma} \label{fundamenta} Denote by $G$  a connected Lie group and by $H$  a connected  
subgroup of $G$. Let 
$\sigma : G/H \to G$ be a global section. Then 
$\pi_1(K) \cong \pi_1(\sigma (G/H)) \times \pi_1(K_1)$, where 
$K$ respectively $K_1$ is  a maximal compact subgroup of $G$ respectively 
$H$. 
\end{Lemma}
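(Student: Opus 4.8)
The plan is to reduce the assertion about maximal compact subgroups to one about the ambient groups and then to use the section $\sigma$ to split $G$ as a topological product. By the Cartan--Iwasawa--Malcev theorem every connected Lie group deformation retracts onto a maximal compact subgroup, so $\pi_1(G)\cong\pi_1(K)$ and $\pi_1(H)\cong\pi_1(K_1)$; hence it is enough to prove $\pi_1(G)\cong\pi_1(\sigma(G/H))\times\pi_1(H)$.

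First I would observe that, since $\pi\circ\sigma=\mathrm{id}_{G/H}$ for the canonical projection $\pi:G\to G/H$, the continuous injection $\sigma$ is a homeomorphism of $G/H$ onto the subspace $\sigma(G/H)$ with inverse $\pi|_{\sigma(G/H)}$; in particular $\pi_1(\sigma(G/H))\cong\pi_1(G/H)$. Next I would write down the explicit map $\Phi:(G/H)\times H\to G$, $\Phi(gH,h)=\sigma(gH)\,h$. Because $\pi(\sigma(gH))=gH$ forces $\sigma(gH)^{-1}g\in H$ for every $g\in G$, the assignment $g\mapsto\bigl(\pi(g),\,\sigma(\pi(g))^{-1}g\bigr)$ is well defined and continuous, and a direct check shows it is a two-sided inverse of $\Phi$. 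Thus $\Phi$ is a homeomorphism and $G$ is homeomorphic to $(G/H)\times H$. Since $\pi_1$ of a finite product is the product of the $\pi_1$'s, we get $\pi_1(G)\cong\pi_1(G/H)\times\pi_1(H)\cong\pi_1(\sigma(G/H))\times\pi_1(K_1)$, and combining with $\pi_1(G)\cong\pi_1(K)$ gives the claim.

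The argument is essentially formal, and the only point that needs real attention is that the global section yields more than the homotopy exact sequence of the fibration $H\to G\to G/H$ would give on its own: a section splits that sequence, but to obtain a genuine \emph{direct product} $\pi_1(\sigma(G/H))\times\pi_1(K_1)$ rather than merely a group extension one should exhibit the topological splitting $G\approx(G/H)\times H$, as above. One should also keep in mind the standing hypotheses of the loop setting --- $H$ closed and connected, so that $G/H$ is a manifold and $\sigma$ a genuine continuous section --- under which all the maps used are legitimate.
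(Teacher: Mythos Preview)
Your argument is correct. The paper does not actually prove this lemma: it only quotes the statement and refers to \cite{figula2}, p.~6, for the proof. Your route---trivialising the fibre bundle $H\to G\to G/H$ via the explicit homeomorphism $\Phi:(gH,h)\mapsto\sigma(gH)\,h$ with continuous inverse $g\mapsto(\pi(g),\sigma(\pi(g))^{-1}g)$, and then invoking the deformation retraction of a connected Lie group onto a maximal compact subgroup---is the standard way to establish such a result and is very likely what the cited reference does as well.

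One small remark: your caution about needing the topological splitting rather than merely the split of the homotopy exact sequence is slightly more than is strictly necessary here, since $\pi_1(G)$ is abelian for any Lie group, so a split short exact sequence on $\pi_1$ would already force a direct product. The real gain of the product decomposition $G\approx(G/H)\times H$ is that it bypasses having to check that the connecting map $\pi_2(G/H)\to\pi_1(H)$ vanishes (equivalently, that $\pi_1(H)\to\pi_1(G)$ is injective). Your observation that one should read ``connected'' together with the standing assumption that $H$ is closed is also to the point, since otherwise $G/H$ need not be a manifold and the section makes no sense.
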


\medskip
\noindent
>From \cite{figula3} we use Lemma 2, which reads as follows.

\begin{Lemma} \label{conjugate} Let $L$ be an almost  differentiable loop and denote by 
${\bf m}$ the 
tangent space $T_1 \sigma(G/H)$, where 
$\sigma : G/H \to G$ is the section corresponding 
to $L$. Then ${\bf m}$ does not contain any element of $Ad_g {\bf h}$ for 
some $g \in G$.  Moreover, every element 
of $G$ can be uniquely written as a product of an 
element of $\sigma(G/H)$ with an element of $H$. 
\end{Lemma}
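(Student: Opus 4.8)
The plan is to settle the factorization statement first and then reduce the tangent‑space statement to a claim about a single $m\in\sigma(G/H)$.

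For the \emph{``Moreover''} part: $\sigma$ is a section of the natural projection $p\colon G\to G/H$, so $p\circ\sigma=\mathrm{id}$ and $\sigma(xH)\in xH$ for every coset; hence $\sigma(G/H)$ meets every left coset $xH$, and since $p\circ\sigma=\mathrm{id}$ forces $\sigma$ to be injective it meets $xH$ exactly once. Thus for $g\in G$ one has $g=mh$ with $m:=\sigma(gH)\in\sigma(G/H)$ and $h:=m^{-1}g\in H$ (the latter because $m\in gH$); and if $g=m_1h_1=m_2h_2$ with $m_i\in\sigma(G/H)$, $h_i\in H$, then $m_1H=gH=m_2H$ gives $m_1=m_2$, whence $h_1=h_2$.

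For the first part I argue by contradiction. Suppose $0\ne Y\in{\bf m}\cap Ad_g{\bf h}$ for some $g\in G$. Writing $g=mh$ as above and using $Ad_h({\bf h})={\bf h}$ (because $h\in H$ and $H$ is a subgroup), one gets $Ad_g{\bf h}=Ad_m{\bf h}$, so the same $Y$ satisfies $0\ne Y\in{\bf m}\cap Ad_m{\bf h}$ with $m\in M:=\sigma(G/H)$; it suffices to exclude this. The engine is the sharply transitive action of $M$ on $G/H$. Since $\lambda_{m'}$ fixes the coset $mH$ precisely when $m'\in mHm^{-1}$, and $\lambda_1$ fixes $mH$ as well, sharp transitivity yields $M\cap mHm^{-1}=\{1\}$; more generally, for each $g'\in G$ there is a unique $m'\in M$ with $\lambda_{m'}(mH)=g'mH$, i.e. with $m'\in g'(mHm^{-1})$, so $M$ is a transversal not only for $H$ but for every conjugate $mHm^{-1}$. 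Hence the restriction to $M$ of the projection $G\to G/(mHm^{-1})$ is a bijection whose differential at $1$ has kernel ${\bf m}\cap Ad_m{\bf h}$; equivalently, the right translation $zH\mapsto\sigma(zH)mH$ of the loop $L$, a bijection of $G/H$ by sharp transitivity, has injective differential at the base point if and only if ${\bf m}\cap Ad_m{\bf h}=\{0\}$. So the claim ${\bf m}\cap Ad_m{\bf h}=\{0\}$ amounts to this differential being an isomorphism.

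The main obstacle is exactly this last step: passing from the set‑theoretic fact ``$M$ is a transversal for $mHm^{-1}$'' to the infinitesimal fact ``$T_1M$ is complementary to $Ad_m{\bf h}=T_1(mHm^{-1})$''. For a general differentiable transversal this implication fails, since a smooth bijection onto the quotient need not be a local diffeomorphism (witness $t\mapsto t^3$); one must use that $\sigma$ is differentiable and $L$ almost differentiable, so that the right translations of $L$ are differentiable with differentiable inverse (the pertinent regularity of differentiable sharply transitive sections being available from \cite{loops}). Then the differential at the base point of $zH\mapsto\sigma(zH)mH$ is invertible, its kernel ${\bf m}\cap Ad_m{\bf h}$ vanishes, and the first assertion follows. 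In the case actually relevant here, where $\sigma(G/H)=\exp{\bf m}$, one can bypass the issue entirely: if $0\ne Y\in{\bf m}\cap Ad_m{\bf h}$, then for small $t\ne0$ the element $\exp(tY)$ lies in $\exp{\bf m}=\sigma(G/H)$ (since $Y\in{\bf m}$) and also in $mHm^{-1}$, because $Ad_{m^{-1}}Y\in{\bf h}$ and $\exp(tY)=m\exp(t\,Ad_{m^{-1}}Y)m^{-1}$ with $\exp(t\,Ad_{m^{-1}}Y)\in H$; thus $\exp(tY)\in\sigma(G/H)\cap mHm^{-1}=\{1\}$, contradicting $\exp(tY)\ne1$.
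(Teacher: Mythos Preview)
The paper does not prove this lemma at all: it is quoted verbatim from \cite{figula3} (Lemma~2 there), so there is no in-paper argument to set yours against.

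On your argument itself: the ``Moreover'' clause is handled correctly, as is the reduction $Ad_g{\bf h}=Ad_m{\bf h}$ via the factorization $g=mh$, and the sharp-transitivity computation showing that $M=\sigma(G/H)$ is a set-theoretic transversal for every conjugate $mHm^{-1}$. You have also located the one genuine difficulty precisely: passing from ``$M$ meets each $mHm^{-1}$-coset exactly once'' to ``${\bf m}\cap Ad_m{\bf h}=0$'' amounts to invertibility of the differential of the right translation $zH\mapsto\sigma(zH)\,mH$, and a smooth bijection of equidimensional manifolds need not have invertible differential.

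Your two proposed resolutions are of unequal strength. Route~(a) would close the gap, but ``almost differentiable'' in the sense of \cite{loops} means smooth multiplication with merely continuous division, so smoothness of right division is not part of the hypothesis; if you want to invoke a regularity theorem from \cite{loops} you must name it explicitly rather than gesture at it. Route~(b) is a clean and correct argument, but it rests on the additional hypothesis $\sigma(G/H)=\exp{\bf m}$, which the lemma does not assume. Note that the paper's applications (Propositions~13--16, 21) argue by contradiction about a \emph{hypothetical} almost differentiable left A-loop whose section is not known in advance to be $\exp{\bf m}$, so route~(b) alone does not cover them. In short: the skeleton of your proof is right and you have pinpointed the crux, but the general statement still hinges on an external input---either a precise citation from \cite{loops} for route~(a), or the original argument in \cite{figula3}.
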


\section{ Affine reductive spaces of small dimension}

\noindent 
In this section we determine all affine reductive homogeneous spaces 
$({\bf g}, {\bf h}, {\bf m})$, where ${\bf g}$ is a simple non-compact 
Lie algebra of dimension at most 9 and ${\bf h}$ is a subalgebra of 
${\bf g}$ such that $\hbox{dim}\  {\bf g}- \hbox{dim}\  {\bf h} >3$. 

\medskip
\noindent
First we deal with the Lie algebra ${\bf g}=\mathfrak{sl_2}(\mathbb C)$. 
A real basis of  ${\bf g}$ is given by  
$\{ e_1, e_2, e_3, \hbox{i} e_1, \hbox{i} e_2, \hbox{i} e_3 \}$, where
$\{e_1, e_2, e_3 \}$ is the basis of $\mathfrak{sl_2}(\mathbb R)$ described 
by $(\ast )$. 
\newline
\noindent
Using the classification of Lie (see Theorem 15 in 
\cite{lie}, p. 129)  we obtain that  every $2$-dimensional  Lie algebra 
${\bf h}$ of  ${\bf g}$ has (up to conjugation) one of the following shapes:  

\medskip
\noindent 
\centerline{${\bf h}_1=\langle e_1, e_2+e_3 \rangle $, \ \  
${\bf h}_2=\langle \hbox{i} (e_2+e_3), e_2+e_3 \rangle $, \ \ 
${\bf h}_3=\langle e_3, \hbox{i} e_3 \rangle $, } 

\medskip
\noindent 
and  every $1$-dimensional  Lie algebra ${\bf h}$ of  ${\bf g}$ 
is one of the following:

\medskip
\noindent 
\centerline{  
${\bf h}_4=\langle e_1 \rangle $, \ \   
${\bf h}_5=\langle e_2+e_3 \rangle $, \ \  ${\bf h}_6=\langle e_3 \rangle $.}

\medskip
\noindent 
\begin{Prop} \label{sl2c} The Lie algebra ${\bf g}=\mathfrak{sl_2}(\mathbb C)$ is reductive 
with respect to the following pairs  $({\bf h}, {\bf m})$, where ${\bf h}$ is an at most 
$2$-dimensional subalgebra of ${\bf g}$ and ${\bf m}$ is a complementary subspace to ${\bf h}$ 
 generating  ${\bf g}$ 

\medskip
\noindent 
1) \ ${\bf h}_3=\langle e_3, \hbox{i} e_3 \rangle $, ${\bf m}= \langle e_1, e_2, \hbox{i} e_1, \hbox{i} e_2 \rangle $,  

\medskip
\noindent 
2) \ ${\bf h}_4=\langle e_1 \rangle $, \ ${\bf m}_a=\langle e_2, e_3, \hbox{i} e_1+a e_1, \hbox{i} e_2, \hbox{i} e_3 \rangle$, \ where $a \in \mathbb R$,  

\medskip
\noindent 
3) \ ${\bf h}_6=\langle e_3 \rangle $, \ ${\bf m}_b=\langle e_1, e_2, \hbox{i} e_1,  
\hbox{i} e_2, \hbox{i} e_3+b e_3 \rangle$, \ where $b \in \mathbb R$. 
\end{Prop}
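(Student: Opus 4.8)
The plan is to go through the six candidate subalgebras ${\bf h}_1,\dots,{\bf h}_6$ from Lie's classification one at a time and, for each, decide whether an $Ad({\bf h})$-invariant complement ${\bf m}$ can exist. The decisive structural tool is the following: a complement ${\bf m}$ to ${\bf h}$ is reductive exactly when it is invariant under $\mathrm{ad}\,{\bf h}$, i.e.\ when the projection $\pi\colon{\bf g}\to{\bf g}/{\bf h}$ turns ${\bf m}$ into an $\mathrm{ad}\,{\bf h}$-submodule isomorphic (via $\pi$) to the quotient module ${\bf g}/{\bf h}$. So first I would, in each case, compute the action of $\mathrm{ad}\,{\bf h}$ on the quotient ${\bf g}/{\bf h}$ using the explicit bracket relations $[e_1,e_2]=2e_3,\ [e_1,e_3]=2e_2,\ [e_3,e_2]=2e_1$ together with $\mathbb R$-bilinearity over the complex structure $\mathrm i$ (which commutes with all brackets, since $\mathfrak{sl}_2(\mathbb C)$ is the complexification of $\mathfrak{sl}_2(\mathbb R)$). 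A reductive complement exists iff this quotient module, as an $\mathrm{ad}\,{\bf h}$-module, splits off inside ${\bf g}$; when it does, I then parametrize all such splittings to obtain the families ${\bf m}_a$, ${\bf m}_b$.

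Next I would dispose of the cases where no complement exists. For ${\bf h}_4=\langle e_1\rangle$, ${\bf h}_5=\langle e_2+e_3\rangle$, ${\bf h}_6=\langle e_3\rangle$, the operator $\mathrm{ad}\,X$ for the generator $X$ is, respectively, semisimple with a pair of nonzero real eigenvalues (hyperbolic case), nilpotent (parabolic case), and semisimple with a pair of nonzero imaginary eigenvalues (elliptic case); I expect the hyperbolic and elliptic generators to be diagonalizable over $\mathbb C$, so a complement always exists and one is reduced to listing them, whereas the parabolic generator $e_2+e_3$ acts with a nontrivial Jordan block, so the module ${\bf g}/{\bf h}_5$ will fail to be a direct summand and ${\bf h}_5$ must be excluded — that is why it does not appear in the statement. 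For the two-dimensional subalgebras: ${\bf h}_1=\langle e_1,e_2+e_3\rangle$ is non-abelian (it is the Lie algebra of ${\mathcal L}_2$), and ${\bf h}_2=\langle \mathrm i(e_2+e_3),e_2+e_3\rangle$ contains the parabolic element $e_2+e_3$; in both I would show the $\mathrm{ad}$-action on the $4$-dimensional quotient is not completely reducible (the nilpotent part of the parabolic element again obstructs splitting), so neither yields a reductive complement. Only ${\bf h}_3=\langle e_3,\mathrm i e_3\rangle$ among the $2$-dimensional subalgebras survives, because $\mathrm{ad}\,e_3$ and $\mathrm{ad}\,(\mathrm i e_3)$ are simultaneously diagonalizable over $\mathbb C$.

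For the three surviving cases I would exhibit the complements explicitly and verify $[{\bf h},{\bf m}]\subseteq{\bf m}$ by direct bracket computation, and also confirm that each ${\bf m}$ generates ${\bf g}$ (using that ${\bf g}$ is simple, it suffices that ${\bf m}$ is not contained in any proper subalgebra — a short check since each ${\bf m}$ has dimension $4$ or $5$ and spans enough brackets to recover a basis). In case 1, ${\bf m}=\langle e_1,e_2,\mathrm i e_1,\mathrm i e_2\rangle$ is the unique $\mathrm{ad}\,{\bf h}_3$-invariant complement because the $\mathrm i e_3$- and $e_3$-weight spaces are one-dimensional over $\mathbb C$ and the complement is forced to be the sum of the nonzero weight spaces. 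In cases 2 and 3, by contrast, the relevant zero-weight space of the semisimple operator $\mathrm{ad}\,e_1$ (resp.\ $\mathrm{ad}\,e_3$) is two-dimensional — spanned by $e_1,\mathrm i e_1$ (resp.\ $e_3,\mathrm i e_3$) — and any complement to the one-dimensional ${\bf h}$ inside that zero-weight plane is $\mathrm{ad}\,{\bf h}$-invariant, giving exactly the one-parameter families $\langle \mathrm i e_1+a e_1\rangle$ and $\langle \mathrm i e_3+b e_3\rangle$ together with the (unique) remaining nonzero weight spaces; this produces the stated ${\bf m}_a$ and ${\bf m}_b$, and one checks these exhaust all possibilities.

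The main obstacle I anticipate is not any single computation but the bookkeeping of the parabolic cases: showing rigorously that when ${\bf h}$ contains $e_2+e_3$ the quotient module has no $\mathrm{ad}$-invariant complement. The clean way is to note that for a nilpotent $N=\mathrm{ad}(e_2+e_3)$ acting on ${\bf g}$, a subspace ${\bf m}$ complementary to $\ker$-related data and $N$-invariant would force $N$ to be semisimple on the quotient, contradicting that $N$ has a genuine $2\times2$ Jordan block (visible already from $[e_2+e_3,e_1]=-2(e_2+e_3)+\cdots$ type relations, which I would compute once and reuse for ${\bf h}_1,{\bf h}_2,{\bf h}_5$).
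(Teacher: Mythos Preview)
Your plan is correct and takes a more structural route than the paper. The paper simply writes down, for each ${\bf h}_i$, a generic complement ${\bf m}_i$ with free real parameters and then imposes $[{\bf h}_i,{\bf m}_i]\subseteq{\bf m}_i$ by direct bracket computation; the parabolic cases ${\bf h}_1,{\bf h}_2,{\bf h}_5$ are eliminated in one line by observing that $[e_2+e_3,X_1]=2e_1-2a_1(e_2+e_3)\in{\bf h}_1$ and $[e_2+e_3,Y_1]=[e_2+e_3,V_1]=-2(e_2+e_3)\in{\bf h}_2\cap{\bf h}_5$, and the remaining three cases are read off from the parameter constraints. Your approach instead analyses the $\mathrm{ad}\,{\bf h}$-module structure: for semisimple generators ($e_1$, $e_3$, and the pair $e_3,\mathrm i e_3$) you diagonalise and read off the complements as sums of weight spaces, which explains transparently why the families ${\bf m}_a,{\bf m}_b$ arise (a free choice of line in the two-dimensional zero-weight plane) and why ${\bf m}$ is unique for ${\bf h}_3$; for the parabolic cases you invoke the Jordan-block obstruction. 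Your framing is more conceptual and makes the one-parameter families inevitable rather than computational accidents, but note that your last paragraph is a bit loose: the clean obstruction in the parabolic cases is not that $N$ would become semisimple on the quotient, but simply that any ${\bf m}$-representative of the coset $e_1+{\bf h}$ is sent by $\mathrm{ad}(e_2+e_3)$ to $-2(e_2+e_3)\in{\bf h}\setminus\{0\}$, which is exactly the computation the paper records. So in the end the negative cases rest on the same single bracket relation; the genuine difference is in how you organise the positive cases.
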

\begin{proof} The basis elements of an arbitrary complement ${\bf m}_1$ to ${\bf h}_1$ in ${\bf g}$ are 

\medskip
\noindent 
\centerline{$X_1=e_2+a_1 e_1+b_1(e_2+e_3)$, \ \ $X_2=\hbox{i} e_1+a_2 e_1+b_2(e_2+e_3)$, }

\medskip
\noindent 
\centerline{$X_3=\hbox{i} e_2+a_3 e_1+b_3(e_2+e_3)$, \ \ $X_4=\hbox{i} e_3+a_4 e_1+b_4(e_2+e_3)$, } 

\medskip
\noindent 
where $a_j$, $b_j$, $j=1,2,3,4$ are real parameters. 
\newline
An arbitrary complement ${\bf m}_2$ to ${\bf h}_2$ in  ${\bf g}$ has 
as generators 

\medskip
\noindent 
\centerline{ $Y_1=e_1+a_1(e_2+e_3)+b_1 \hbox{i} (e_2+e_3)$, \ \ $Y_2=e_2+a_2(e_2+e_3)+b_2 \hbox{i} (e_2+e_3)$, }

\medskip
\noindent
\centerline{ $Y_3=\hbox{i} e_1+a_3(e_2+e_3)+b_3 \hbox{i} (e_2+e_3)$, \ \ $Y_4=\hbox{i} e_2+a_4(e_2+e_3)+b_4 \hbox{i} (e_2+e_3)$, } 

\medskip
\noindent 
where $a_j, b_j \in \mathbb R$, $j=1,2,3,4$.
\newline
We can choose as basis elements of an arbitrary complement ${\bf m}_3$ to ${\bf h}_3$ the following:

\medskip
\noindent 
\centerline{$Z_1=e_1+a_1 e_3+b_1 \hbox{i} e_3$, \ \ $Z_2=e_2+a_2 e_3+b_2 \hbox{i} e_3$, }

\medskip
\noindent 
\centerline{$Z_3=\hbox{i} e_1+a_3 e_3+b_3 \hbox{i} e_3$, \ \ $Z_4=\hbox{i} e_2+a_4 e_3+b_4 \hbox{i} e_3$, }
 
\medskip
\noindent 
where $a_j, b_j \in \mathbb R$, $j=1,2,3,4$ are real numbers. 
\newline
An arbitrary complement ${\bf m}_4$ to ${\bf h}_4$ in ${\bf g}$ 
has as basis elements 

\medskip
\noindent 
\centerline{ $W_1=e_2+a_1 e_1$, \ \ $W_2=e_3+a_2 e_1$, \ \ $W_3=\hbox{i} e_1+a_3 e_1$, }

\medskip
\noindent 
\centerline{$W_4=\hbox{i} e_2+a_4 e_1$, \ \ $W_5=\hbox{i} e_3+a_5 e_1$  } 

\medskip
\noindent 
with the real parameters $a_j$, $j=1,2,3,4,5$. 
\newline 
The generators of an arbitrary complement ${\bf m}_5$ to ${\bf h}_5$ 
in ${\bf g}$ are 

\medskip
\noindent 
\centerline{ $V_1=e_1+a_1(e_2+e_3)$, \ \ $V_2=e_2+a_2(e_2+e_3)$, \ \ 
$V_3=\hbox{i} e_1+a_3(e_2+e_3)$, }

\medskip
\noindent 
\centerline{ $V_4=\hbox{i} e_2+a_4(e_2+e_3)$, \ \ 
$V_5=\hbox{i} e_3+a_5(e_2+e_3)$, } 

\medskip
\noindent 
where $a_j$, $j=1,2,3,4,5$ are real parameters.  
\newline
An arbitrary complement ${\bf m}_6$ to ${\bf h}_6$ in  ${\bf g}$ has as 
generators 

\medskip
\noindent 
\centerline{ $U_1=e_1+a_1 e_3$, \ \ $U_2=e_2+a_2 e_3$, \ \ 
$U_3=\hbox{i} e_1+a_3 e_3$, }

\medskip
\noindent 
\centerline{ $U_4=\hbox{i} e_2+a_4 e_3$, \ \ $U_5=\hbox{i} e_3+a_5 e_3$ } 

\medskip
\noindent 
with $a_1, a_2, a_3, a_4, a_5 \in \mathbb R$.
\newline
Using the relation $[{\bf h}_i,{\bf m}_i] \subseteq {\bf m}_i$, 
$i=1, \cdots ,6$, we obtain the 
contradictions that $[e_2+e_3, X_1]=2 e_1-2 a_1(e_2+ e_3) \in {\bf h}_1$ 
and 
$[e_2+e_3,Y_1]=[e_2+e_3,V_1]=-2(e_2+e_3) \in {\bf h}_2 \cap {\bf h}_5$ 
and the assertion follows. 
\end{proof}

\bigskip
\noindent
Now we consider the Lie algebra $\bf{g}=\mathfrak{sl_3}(\mathbb R)$.  
It is isomorphic to the Lie 
algebra of  matrices
\[(\lambda _1 e_1+\lambda _2 e_2+\lambda _3 e_3+\lambda _4 e_4+\lambda _5 e_5+\lambda _6 e_6+\lambda _7 e_7+\lambda _8 e_8) \mapsto \]
\[ \left ( \begin{array}{ccc}
- \lambda _5-\lambda _8 & \lambda _1 & \lambda_2 \\
 \lambda _3 &  \lambda _5 & \lambda _6 \\
 \lambda _4 & \lambda _7 & \lambda _8 \end{array} \right ); \lambda_i \in 
\mathbb R, i=1,\cdots ,8. \] 
In this representation  the Lie multiplication  of ${\bf g}$ is given by 
\[ [e_1,e_2]=[e_1,e_7]=[e_2,e_6]=[e_3,e_4]=[e_3,e_6]=[e_4,e_7]=[e_5,e_8]=0, \]
\[ [e_1,e_6]=[e_2,e_5]=\frac{1}{2} [e_2,e_8]=e_2,\  [e_1,e_8]=[e_2,e_7]=\frac{1}{2} [e_1,e_5]=e_1, \]
\[ [e_4,e_6]=[e_3,e_8]=\frac{1}{2} [e_3,e_5]=-e_3,\  [e_3,e_7]=[e_4,e_5]=\frac{1}{2} [e_4,e_8]=-e_4, \]
\[ [e_6,e_8]=[e_5,e_6]=[e_3,e_2]=e_6,\  [e_1,e_4]=[e_5,e_7]=[e_7,e_8]=-e_7,  \]
\[ [e_1,e_3]=- e_5,\  [e_2,e_4]=-e_8,\  [e_6,e_7]=e_5-e_8. \] 
Now using the classification  of 
Lie, who has determined all subalgebras of 
$\mathfrak{sl_3}(\mathbb R)$  (cf. \cite{lie}, pp. 288-289 and \cite{lie2}, 
p. 384) we obtain that every $4$-dimensional Lie algebra ${\bf h}$ of  ${\bf g}$  has 
(up to conjugation) one of the following forms: 

\medskip
\noindent
${\bf h}_1=\langle e_1, e_2, e_6, e_5+c e_8 \rangle $, \   \  
${\bf h}_2=\langle e_3, e_5, e_6, e_8 \rangle $, \ \ 
${\bf h}_3=\langle e_1, e_2, e_6, e_8 \rangle $, 
\newline
${\bf h}_4=\langle e_2, e_5, e_6, e_8  \rangle $, \ \ 
${\bf h}_5 \cong \mathfrak{gl_2}(\mathbb R)= 
\langle e_5, e_6, e_7, e_8  \rangle $,
where $c \in \mathbb R$. 

\medskip
\noindent
The $3$-dimensional subalgebras ${\bf h}$ of  ${\bf g}$ (up to conjugation) 
are the following:

\medskip
\noindent
${\bf h}_6 \cong \mathfrak{so_3}(\mathbb R)= \langle e_1-e_3, e_2-e_4, e_7-e_6 \rangle $, \ 
${\bf h}_7 \cong \mathfrak{sl_2}(\mathbb R)= \langle e_1+e_3, e_2+e_4, e_6-e_7 \rangle $,
\newline 
${\bf h}_8 \cong \mathfrak{sl_2}(\mathbb R)= \langle e_5-e_8, e_6, e_7 \rangle $,
${\bf h}_9= \langle a(e_5+e_8)+e_6-e_7, e_1, e_2 \rangle $, $a \ge 0$, \ 
\newline
${\bf h}_{10}=  \langle e_5- e_8, e_2+ e_3, e_6 \rangle $,
${\bf h}_{11}=  \langle  e_3, e_6, e_8+e_2  \rangle $, 
${\bf h}_{12}= \langle  e_2, e_6,  e_5+ e_8- e_3 \rangle $, 
\newline
${\bf h}_{13}= \langle e_1, e_2, e_6 \rangle $,
${\bf h}_{14}= \langle  e_5, e_8,  e_6 \rangle $, 
${\bf h}_{15}=  \langle  e_2, e_5+ e_8,  e_6  \rangle $, 
${\bf h}_{16}= \langle e_3, e_6, e_8 \rangle $,   
\newline
${\bf h}_{17}= \langle e_2, e_6, (b-1)e_5+b e_8, 
\rangle $, $b \in \mathbb R$,
${\bf h}_{18}= \langle e_3, e_6,  e_5+ c e_8 \rangle $, $c \in \mathbb R$. 

\medskip
\noindent
The $2$-dimensional subalgebras ${\bf h}$ of  ${\bf g}$ are given 
(up to conjugation) by 

\medskip
\noindent
${\bf h}_{19}= \langle e_6,  e_2+ e_3 \rangle $, \ \ ${\bf h}_{20}= 
\langle e_6,  e_2+ e_8 \rangle $,\  \ ${\bf h}_{21}= \langle e_3,  
e_6+ e_5 \rangle $, 
\newline
${\bf h}_{22}= \langle e_3,  e_5+ a e_8 \rangle $, \ $a \in \mathbb R \backslash \{0,1 \}$, \ \  
${\bf h}_{23}= \langle e_5,  e_6 \rangle $, \ \ 
${\bf h}_{24}= \langle e_2,  e_6 \rangle $, 
\newline
${\bf h}_{25}= \langle e_6,  e_3 \rangle $, \ \ ${\bf h}_{26}= \langle e_5, 
e_8 \rangle $, \ \ 
${\bf h}_{27}= \langle e_6,  e_5+ e_8 \rangle $, \ \ 
${\bf h}_{28}= \langle e_6,  e_8 \rangle $, 
\newline
${\bf h}_{29}= \langle e_5-e_8, e_2+ e_3 \rangle $,\ \ 
${\bf h}_{30}= \langle e_5+e_8, e_6- e_7 \rangle $. 

\medskip
\noindent
Moreover, every $1$-dimensional subalgebra ${\bf h}$ of  ${\bf g}$ has one  
of the following shapes:

\medskip
\noindent
${\bf h}_{31}= \langle  e_5+ a e_8 \rangle $, \ $a \in \mathbb R \backslash \{0 \}$, \ \ 
${\bf h}_{32}= \langle e_2+e_8 \rangle $, \ \ 
${\bf h}_{33}= \langle e_2+e_3 \rangle $, 
\newline
${\bf h}_{34}= \langle e_6 \rangle $, \ \  
${\bf h}_{35}= \langle e_6-e_7+b(e_5+ e_8) \rangle $, $b \ge 0$.

\begin{Prop} \label{sl3R4dim} The Lie algebra 
${\bf g}=\mathfrak{sl_3}(\mathbb R)$ is reductive 
with respect to a $4$-dimensional subalgebra ${\bf h}$ of ${\bf g}$ and 
a complementary subspace ${\bf m}$ generating ${\bf g}$ only in the case 
${\bf h}_5 \cong \mathfrak{gl_2}(\mathbb R)$ and  
${\bf m}_5=\langle e_1, e_2, e_3, e_4 \rangle $. 
\end{Prop}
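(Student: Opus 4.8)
The plan is to run through the five conjugacy classes ${\bf h}_1,\dots ,{\bf h}_5$ of $4$-dimensional subalgebras of ${\bf g}=\mathfrak{sl_3}(\mathbb R)$ listed above, disposing of ${\bf h}_1,\dots ,{\bf h}_4$ by exhibiting in each case one bracket that obstructs \emph{any} reductive complement, and treating ${\bf h}_5\cong\mathfrak{gl_2}(\mathbb R)$ by a direct check of the multiplication table together with a short representation-theoretic uniqueness argument.

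\medskip\noindent
For ${\bf h}_1=\langle e_1,e_2,e_6,e_5+ce_8\rangle $, ${\bf h}_3=\langle e_1,e_2,e_6,e_8\rangle $ and ${\bf h}_4=\langle e_2,e_5,e_6,e_8\rangle $ I would argue as in the proof of Proposition \ref{sl2c}. Each of these subalgebras contains $e_2$ and $e_6$ but not $e_3$. Hence, if ${\bf m}$ were an ${\rm ad}({\bf h}_i)$-invariant complement, writing $e_3=X+h$ with $X\in{\bf m}$, $h\in{\bf h}_i$ we would have $X-e_3\in{\bf h}_i$, and using the multiplication table of ${\bf g}$ one computes $[e_2,X]=[e_2,e_3]+[e_2,X-e_3]=-e_6+\lambda e_2$ for some $\lambda\in\mathbb R$ (the second summand lies in ${\bf h}_i$ since ${\bf h}_i$ is a subalgebra, and ${\rm ad}\,e_2$ kills $e_1,e_6$ and sends the torus part of ${\bf h}_i$ to a multiple of $e_2$). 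Thus $[e_2,X]$ is a non-zero element of ${\bf h}_i$, contradicting $[{\bf h}_i,{\bf m}]\subseteq{\bf m}$ together with ${\bf h}_i\cap{\bf m}=0$. For ${\bf h}_2=\langle e_3,e_5,e_6,e_8\rangle $, which contains $e_3$ and $e_5$ but not $e_1$, the same scheme works with the roles of $e_1$ and $e_3$ interchanged: for $X\in{\bf m}$ with $X-e_1\in{\bf h}_2$ one gets $[e_3,X]=[e_3,e_1]+[e_3,X-e_1]=e_5+\mu e_3\in{\bf h}_2\setminus\{0\}$, the same contradiction. So none of ${\bf h}_1,\dots ,{\bf h}_4$ admits a reductive complement at all.

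\medskip\noindent
For ${\bf h}_5\cong\mathfrak{gl_2}(\mathbb R)=\langle e_5,e_6,e_7,e_8\rangle $ and ${\bf m}_5=\langle e_1,e_2,e_3,e_4\rangle $ I would first read off from the multiplication table that $[{\bf h}_5,{\bf m}_5]\subseteq{\bf m}_5$, and even $[{\bf m}_5,{\bf m}_5]\subseteq{\bf h}_5$, so that $({\bf g},{\bf h}_5,{\bf m}_5)$ is a reductive (in fact symmetric) triple, and that ${\bf m}_5$ generates ${\bf g}$, since $[e_1,e_3]=-e_5$, $[e_3,e_2]=e_6$, $[e_1,e_4]=-e_7$ and $[e_2,e_4]=-e_8$ already recover all of ${\bf h}_5$. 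It remains to show that ${\bf m}_5$ is the only invariant complement. As an ${\bf h}_5$-module, ${\bf g}$ splits as the adjoint module ${\bf h}_5$ plus the two $2$-dimensional submodules $\langle e_1,e_2\rangle $ and $\langle e_3,e_4\rangle $, on which the centre $\langle e_5+e_8\rangle $ of ${\bf h}_5$ acts by the scalars $-3$ and $3$ respectively, whereas it acts trivially on the adjoint module. Hence these three summands are pairwise non-isomorphic, so a complement of the form $\{v+f(v)\}$ with $f$ an ${\bf h}_5$-module homomorphism into ${\bf h}_5$ forces $f=0$ by Schur-type arguments, i.e.\ the only ${\rm ad}({\bf h}_5)$-invariant complement of ${\bf h}_5$ is $\langle e_1,e_2\rangle \oplus\langle e_3,e_4\rangle ={\bf m}_5$.

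\medskip\noindent
The negative cases are immediate once the right obstruction bracket is spotted, so the only genuine work is in the case ${\bf h}_5$: verifying the (routine) reductivity and generation statements against the multiplication table and, above all, establishing the uniqueness of ${\bf m}_5$. I expect this last point -- ruling out all ``sheared'' complements, which is exactly where the distinctness of the central weights $-3$ and $3$ enters -- to be the part that requires the most care.
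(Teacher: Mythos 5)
Your argument is correct, but it takes a genuinely different route from the paper's. The paper's proof is purely mechanical: for each of ${\bf h}_1,\dots ,{\bf h}_5$ it writes down the most general complement (sixteen free real parameters) and imposes $[{\bf h}_i,{\bf m}_i]\subseteq {\bf m}_i$ coordinate by coordinate against the multiplication table, which simultaneously kills ${\bf h}_1,\dots ,{\bf h}_4$ and forces all parameters to vanish for ${\bf h}_5$. You replace this by two structural observations. For the negative cases you exhibit one obstruction bracket: the ${\bf m}$-component $X$ of $e_3$ (resp.\ of $e_1$ for ${\bf h}_2$) satisfies $[e_2,X]=-e_6+\lambda e_2$ (resp.\ $[e_3,X]=e_5+\mu e_3$), a nonzero element of ${\bf h}_i$ that reductivity would force into ${\bf m}$; I checked against the paper's table that $[e_2,e_3]=-e_6$, $[e_3,e_1]=e_5$, that $\mathrm{ad}\,e_2$ maps each of ${\bf h}_1,{\bf h}_3,{\bf h}_4$ into $\langle e_2\rangle$ and $\mathrm{ad}\,e_3$ maps ${\bf h}_2$ into $\langle e_3\rangle$, so this is sound. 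For ${\bf h}_5$ your uniqueness argument via the graph map $f:{\bf m}_5\to{\bf h}_5$ and the central element $e_5+e_8$, which acts by the nonzero scalars $-3$ and $+3$ on $\langle e_1,e_2\rangle$ and $\langle e_3,e_4\rangle$ but trivially on ${\bf h}_5$, is also correct (I verified the weights from the table) and immediately gives $f=0$. Your route is shorter and explains \emph{why} only ${\bf m}_5$ survives, at the cost of having to spot the right bracket; the paper's brute force needs no such insight. One cosmetic point: the appeal to pairwise non-isomorphy and ``Schur-type arguments'' is more than you need (and the adjoint module ${\bf h}_5$ is not irreducible anyway) --- the central-weight computation you give already yields $c\,f(v)=[e_5+e_8,f(v)]=0$ with $c=\pm 3$, hence $f=0$ directly.
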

\begin{proof}  The basis elements of an arbitrary complement ${\bf m}_i$ 
to the subalgebra ${\bf h}_i$ are: 
\newline
For $i=1$

\medskip
\noindent
$e_3+a_1 e_1+a_2 e_2+a_3(e_5+c e_8)+a_4 e_6$, \ $e_4+b_1 e_1+b_2 e_2+b_3(e_5+c e_8)+b_4 e_6$,

\medskip
\noindent
$e_7+c_1 e_1+c_2 e_2+c_3(e_5+c e_8)+c_4 e_6$, \ $e_8+d_1 e_1+d_2 e_2+d_3(e_5+c e_8)+d_4 e_6$,

\medskip
\noindent
for $i=2$ 

\medskip
\noindent
\centerline{
$e_1+a_1 e_3+a_2 e_5+a_3 e_6+a_4 e_8$, \ $e_2+b_1 e_3+b_2 e_5+b_3 e_6+b_4 e_8$, }

\medskip
\noindent
\centerline{
$e_4+c_1 e_3+c_2 e_5+c_3 e_6+c_4 e_8$, \ $e_7+d_1 e_3+d_2 e_5+d_3 e_6+d_4 e_8$,}

\medskip
\noindent
for $i=3$ 

\medskip
\noindent
\centerline{$e_3+a_1 e_1+a_2 e_2+a_3 e_6+a_4 e_8$, \ $e_4+b_1 e_1+b_2 e_2+b_3 e_6+b_4 e_8$, }

\medskip
\noindent
\centerline{$e_5+c_1 e_1+c_2 e_2+c_3 e_6+c_4 e_8$, \ $e_7+d_1 e_1+d_2 e_2+d_3 e_6+d_4 e_8$, }

\medskip
\noindent
for $i=4$ 

\medskip
\noindent
\centerline{$e_1+a_1 e_2+a_2 e_5+a_3 e_6+a_4 e_8$, \ $e_3+b_1 e_2+b_2 e_5+b_3 e_6+b_4 e_8$, }

\medskip
\noindent
\centerline{$e_4+c_1 e_2+c_2 e_5+c_3 e_6+c_4 e_8$, \ $e_7+d_1 e_2+d_2 e_5+d_3 e_6+d_4 e_8$, }

\medskip
\noindent
for $i=5$ 

\medskip
\noindent
\centerline{$e_1+a_1 e_5+a_2 e_6+a_3 e_7+a_4 e_8$, \ $e_2+b_1 e_5+b_2 e_6+b_3 e_7+b_4 e_8$, }

\medskip
\noindent
\centerline{$e_3+c_1 e_5+c_2 e_6+c_3 e_7+c_4 e_8$, \ $e_4+d_1 e_5+d_2 e_6+d_3 e_7+d_4 e_8$, }

\medskip
\noindent
where $a_j, b_j, c_j, d_j$ are real numbers $j=1,2,3,4$. The assertion 
follows now from the relation $[{\bf h},{\bf m}] \subseteq {\bf m}$.  
\end{proof}

\begin{Prop} \label{sl3R3dim} The Lie algebra 
${\bf g}=\mathfrak{sl_3}(\mathbb R)$ is 
reductive with a $3$-dimensional subalgebra ${\bf h}$ and a 
$5$-dimensional complementary subspace ${\bf m}$ generating ${\bf g}$  in 
precisely one of the following cases: 

\medskip
\noindent
1) \  ${\bf h}_6 \cong \mathfrak{so_3}(\mathbb R)$,  
${\bf m}_6= \langle e_5,e_8,e_1+e_3, e_2+e_4, e_7+e_6 \rangle$, 

\medskip
\noindent
2) \ ${\bf h}_7= \langle e_1+e_3, e_2+e_4, 
e_6-e_7 \rangle$, 
${\bf m}_7=\langle e_5,e_8,e_1-e_3, e_2-e_4, e_7+e_6 \rangle$, 

\medskip
\noindent
3) \ ${\bf h}_8= \langle e_5-e_8, e_6, e_7 \rangle$, 
${\bf m}_8=\langle e_1, e_2, e_3, e_4, e_5+e_8 \rangle$. 

\medskip
\noindent
Both Lie algebras ${\bf h}_7$ and ${\bf h}_8$ are isomorphic to 
$\mathfrak{sl_2}(\mathbb R)$. 
\end{Prop}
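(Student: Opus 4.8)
\noindent
The plan is to go through the thirteen three‑dimensional subalgebras
${\bf h}_6,\dots,{\bf h}_{18}$ listed above and decide, for each, whether there
is a $5$‑dimensional complement ${\bf m}$ of ${\bf h}_i$ in
${\bf g}=\mathfrak{sl_3}(\mathbb R)$ with $[{\bf h}_i,{\bf m}]\subseteq{\bf m}$.
First I would remark that the condition ``${\bf m}$ generates ${\bf g}$'' is
automatic once such a complement exists: if ${\bf m}\neq 0$ and
$[{\bf h},{\bf m}]\subseteq{\bf m}$, then by the Jacobi identity the subalgebra
${\bf k}$ generated by ${\bf m}$ again satisfies $[{\bf h},{\bf k}]\subseteq{\bf k}$,
so ${\bf k}$ is an ideal of ${\bf g}={\bf k}+{\bf h}$ and, ${\bf g}$ being simple,
${\bf k}={\bf g}$. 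Hence the proposition amounts to showing that among
${\bf h}_6,\dots,{\bf h}_{18}$ exactly ${\bf h}_6,{\bf h}_7,{\bf h}_8$ admit such a
complement, that it is unique, and that it is the displayed one.

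\noindent
For the three surviving subalgebras the argument is representation theoretic.
Since ${\bf h}_6\cong\mathfrak{so_3}(\mathbb R)$ and
${\bf h}_7\cong{\bf h}_8\cong\mathfrak{sl_2}(\mathbb R)$ are semisimple, the
$\hbox{ad}({\bf h}_i)$‑module ${\bf g}$ is completely reducible (Weyl), so a
complement with $[{\bf h}_i,{\bf m}]\subseteq{\bf m}$ exists; to identify it I would
write down the isotypic decomposition of ${\bf g}$. For ${\bf h}_6$ the
decomposition $\mathfrak{sl_3}(\mathbb R)=\mathfrak{so_3}(\mathbb R)\oplus
\{\hbox{traceless symmetric matrices}\}$ exhibits the irreducible
$5$‑dimensional summand ${\bf m}_6=\langle e_5,e_8,e_1+e_3,e_2+e_4,e_6+e_7\rangle$.
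The subalgebra ${\bf h}_7$ acts irreducibly on $\mathbb R^3$, hence is a principal
$\mathfrak{sl_2}(\mathbb R)$, and the Clebsch--Gordan rule applied to the adjoint
module gives the spin‑$2$ summand
${\bf m}_7=\langle e_5,e_8,e_1-e_3,e_2-e_4,e_6+e_7\rangle$. The subalgebra
${\bf h}_8$ is the $\mathfrak{sl_2}(\mathbb R)$ of the lower $2\times2$ block, and
${\bf g}$ splits as ${\bf h}_8$ (the adjoint) together with the two copies
$\langle e_1,e_2\rangle$, $\langle e_3,e_4\rangle$ of the standard $2$‑dimensional
module and the trivial module $\langle e_5+e_8\rangle$, which forces
${\bf m}_8=\langle e_1,e_2,e_3,e_4,e_5+e_8\rangle$. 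In each case the adjoint
representation of ${\bf h}_i$ occurs in ${\bf g}$ with multiplicity one, so
$\hbox{Hom}_{{\bf h}_i}({\bf g}/{\bf h}_i,{\bf h}_i)=0$ and the complement is unique.

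\noindent
It remains to rule out the ten non‑semisimple subalgebras
${\bf h}_9,\dots,{\bf h}_{18}$. Here I would proceed exactly as in the proof of
Proposition~\ref{sl3R4dim}: for each ${\bf h}_i$ pick five of the basis vectors
$e_1,\dots,e_8$ whose classes span ${\bf g}/{\bf h}_i$, write an arbitrary
complement as the span of vectors $e_k+v_k$ with $v_k\in{\bf h}_i$ (so $15$ real
parameters), and impose $[{\bf h}_i,{\bf m}]\subseteq{\bf m}$ by requiring that the
${\bf h}_i$‑component of each bracket $[h,e_k+v_k]$, with $h$ running through a
basis of ${\bf h}_i$, vanish. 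Using the structure constants listed before the
proposition one finds that this overdetermined linear system has no solution:
typically one exhibits an element $h$ in the nilradical of ${\bf h}_i$ and a
generator $e_k+v_k$ for which the ${\bf h}_i$‑component of $[h,e_k+v_k]$ cannot
be cancelled, so that ${\bf m}\cap{\bf h}_i\neq\{0\}$, contradicting that ${\bf m}$ is
a complement. I would order the ten cases to reuse work, handling first the
subalgebras containing the abelian ideal $\langle e_1,e_2\rangle$ (namely
${\bf h}_9$ and ${\bf h}_{13}$) and then grouping the rest according to their
nilradicals and solvable structure.

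\noindent
The main obstacle is this last step: there is no single slick argument covering
all ten subalgebras, because their internal structures differ considerably, so
the proof comes down to ten explicit bracket computations with the given
structure constants. Conceptually the mechanism is uniform --- a reductive
complement would split off a non‑reductive piece that these particular
embeddings into the simple algebra $\mathfrak{sl_3}(\mathbb R)$ do not admit ---
but each individual check terminates quickly once the right element of
${\bf h}_i$ is chosen to bracket against.
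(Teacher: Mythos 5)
Your proposal is correct, and for the three positive cases it follows a genuinely different route from the paper. The paper treats all thirteen subalgebras ${\bf h}_6,\dots ,{\bf h}_{18}$ uniformly: it writes a general complement with $15$ real parameters and imposes $[{\bf h}_i,{\bf m}_i]\subseteq {\bf m}_i$, leaving the resulting linear algebra to the reader. You replace this, for the semisimple ${\bf h}_6,{\bf h}_7,{\bf h}_8$, by Weyl's theorem plus the explicit isotypic decompositions (Cartan decomposition for $\mathfrak{so_3}(\mathbb R)$, Clebsch--Gordan for the principal $\mathfrak{sl_2}(\mathbb R)$, and the block decomposition for ${\bf h}_8$), which buys you existence, identification \emph{and} uniqueness of the complement in one stroke, since the adjoint module occurs with multiplicity one and hence $\hbox{Hom}_{{\bf h}_i}({\bf g}/{\bf h}_i,{\bf h}_i)=0$; the paper's method would have to extract uniqueness from the parameter equations. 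Your preliminary observation that the generating condition is automatic (the subalgebra generated by an $\hbox{ad}({\bf h})$-invariant complement is an ideal of the simple algebra ${\bf g}$) is a genuine simplification not made explicit in the paper. For the ten non-semisimple subalgebras your plan coincides with the paper's computation and is left at the same level of explicitness as the paper itself: neither you nor the author displays the actual bracket calculations, so the exclusion of ${\bf h}_9,\dots ,{\bf h}_{18}$ still rests on ten routine but unrecorded verifications (each of which does terminate as you describe, e.g.\ for ${\bf h}_{13}=\langle e_1,e_2,e_6\rangle$ one has $[e_1,e_5+c_1e_1+c_2e_2+c_3e_6]=2e_1+c_3e_2\in{\bf h}_{13}\setminus\{0\}$, which cannot lie in a complement).
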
 
\begin{proof}  The generators of an arbitrary complement ${\bf m}_i$ to 
${\bf h}_i$ in ${\bf g}$ are: 
\newline 
For $i=6$ 

\medskip
\noindent
\centerline{$e_3+a_1(e_1-e_3)+a_2(e_2-e_4)+a_3(e_7-e_6)$,}

\medskip
\noindent
\centerline{$e_4+b_1(e_1-e_3)+b_2(e_2-e_4)+b_3(e_7-e_6)$,}

\medskip
\noindent
\centerline{$e_5+c_1(e_1-e_3)+c_2(e_2-e_4)+c_3(e_7-e_6)$,}

\medskip
\noindent
\centerline{$e_6+d_1(e_1-e_3)+d_2(e_2-e_4)+d_3(e_7-e_6)$,}

\medskip
\noindent
\centerline{$e_8+f_1(e_1-e_3)+f_2(e_2-e_4)+f_3(e_7-e_6)$,}

\medskip
\noindent
for $i=7$ 

\medskip
\noindent
\centerline{$e_3+a_1(e_1+e_3)+a_2(e_2+e_4)+a_3(e_6-e_7)$,}

\medskip
\noindent
\centerline{$e_4+b_1(e_1+e_3)+b_2(e_2+e_4)+b_3(e_6-e_7)$,}

\medskip
\noindent
\centerline{$e_5+c_1(e_1+e_3)+c_2(e_2+e_4)+c_3(e_6-e_7)$,}

\medskip
\noindent
\centerline{$e_6+d_1(e_1+e_3)+d_2(e_2+e_4)+d_3(e_6-e_7)$,}

\medskip
\noindent
\centerline{$e_8+f_1(e_1+e_3)+f_2(e_2+e_4)+f_3(e_6-e_7)$,}

\medskip
\noindent
for $i=8$ 

\medskip
\noindent
\centerline{$e_1+a_1(e_5-e_8)+a_2 e_6+a_3 e_7$, \ $e_2+b_1(e_5-e_8)+b_2 e_6+b_3 e_7$,}

\medskip
\noindent
\centerline{$e_3+c_1(e_5-e_8)+c_2 e_6+c_3 e_7$, \ $e_4+d_1(e_5-e_8)+d_2 e_6+d_3 e_7$,}

\medskip
\noindent
\centerline{$e_5+f_1(e_5-e_8)+f_2 e_6+f_3 e_7$,}

\medskip
\noindent
for $i=9$ 

\medskip
\noindent
\centerline{$e_3+a_1 e_1+a_2 e_2+a_3(e_6-e_7+a(e_5+e_8))$,}

\medskip
\noindent
\centerline{$e_4+b_1 e_1+b_2 e_2+b_3(e_6-e_7+a(e_5+e_8))$,}

\medskip
\noindent
\centerline{$e_5+c_1 e_1+c_2 e_2+c_3(e_6-e_7+a(e_5+e_8))$, }

\medskip
\noindent
\centerline{$e_6+d_1 e_1+d_2 e_2+d_3(e_6-e_7+a(e_5+e_8))$,}

\medskip
\noindent
\centerline{$e_8+f_1 e_1+f_2 e_2+f_3(e_6-e_7+a(e_5+e_8))$,}

\medskip
\noindent
for $i=10$ 

\medskip
\noindent
$e_1+a_1(e_2+e_3)+a_2(e_5-e_8)+a_3 e_6$, \ $e_2+b_1(e_2+e_3)+b_2(e_5-e_8)+b_3 e_6$,

\medskip
\noindent
$e_4+c_1(e_2+e_3)+c_2(e_5-e_8)+c_3 e_6$, \ $e_5+d_1(e_2+e_3)+d_2(e_5-e_8)+d_3 e_6$,

\medskip
\noindent
\centerline{$e_7+f_1(e_2+e_3)+f_2(e_5-e_8)+f_3 e_6$,} 

\medskip
\noindent
for $i=11$ 

\medskip
\noindent
\centerline{$e_1+a_1(e_2+e_8)+a_2 e_3+a_3 e_6$, \ $e_2+b_1(e_2+e_8)+b_2 e_3+b_3 e_6$,}

\medskip
\noindent
\centerline{$e_4+c_1(e_2+e_8)+c_2 e_3+c_3 e_6$, \ $e_5+d_1(e_2+e_8)+d_2 e_3+d_3 e_6$,}

\medskip
\noindent
\centerline{$e_7+f_1(e_2+e_8)+f_2 e_3+f_3 e_6$,} 

\medskip
\noindent
for $i=12$ 

\medskip
\noindent
\centerline{$e_1+a_1 e_2+a_2 e_6+a_3(e_5+e_8-e_3)$, \ $e_3+b_1 e_2+b_2 e_6+b_3(e_5+e_8-e_3)$, }

\medskip
\noindent
\centerline{$e_4+c_1 e_2+c_2 e_6+c_3(e_5+e_8-e_3)$, \ $e_7+d_1 e_2+d_2 e_6+d_3(e_5+e_8-e_3)$, }

\medskip
\noindent
\centerline{$e_8+f_1 e_2+f_2 e_6+f_3(e_5+e_8-e_3)$,} 

\medskip
\noindent
for $i=13$ 

\medskip
\noindent
\centerline{$e_3+a_1 e_1+a_2 e_2+a_3 e_6$, 
\ $e_4+b_1 e_1+b_2 e_2+b_3 e_6$, \ $e_5+c_1 e_1+c_2 e_2+c_3 e_6$,}

\medskip
\noindent
\centerline{$e_7+d_1 e_1+d_2 e_2+d_3 e_6$, \ $e_8+f_1 e_1+f_2 e_2+f_3 e_6$,}

\medskip
\noindent
for $i=14$ 

\medskip
\noindent
\centerline{$e_1+a_1 e_5+a_2 e_6+a_3 e_8$, \ $e_2+b_1 e_5+b_2 e_6+b_3 e_8$,
\ $e_3+c_1 e_5+c_2 e_6+c_3 e_8$, }

\medskip
\noindent
\centerline{$e_4+d_1 e_5+d_2 e_6+d_3 e_8$, \ 
$e_7+f_1 e_5+f_2 e_6+f_3 e_8$, }

\medskip
\noindent
for $i=15$ 

\medskip
\noindent
\centerline{$e_1+a_1 e_2+a_2 (e_5+e_8)+a_3 e_6$, \ $e_3+b_1 e_2+b_2 (e_5+e_8)+b_3 e_6$, }

\medskip
\noindent
\centerline{$e_4+c_1 e_2+c_2 (e_5+e_8)+c_3 e_6$, \ $e_5+d_1 e_2+d_2 (e_5+e_8)+d_3 e_6$, } 

\medskip
\noindent
\centerline{$e_7+f_1 e_2+f_2 (e_5+e_8)+f_3 e_6$, }

\medskip
\noindent
for $i=16$ 

\medskip
\noindent
\centerline{$e_1+a_1 e_3+a_2 e_6+a_3 e_8$, 
\ $e_2+b_1 e_3+b_2 e_6+b_3 e_8$, \ 
$e_4+c_1 e_3+c_2 e_6+c_3 e_8$, }

\medskip
\noindent
\centerline{$e_5+d_1 e_3+d_2 e_6+d_3 e_8$, \ 
$e_7+f_1 e_3+f_2 e_6+f_3 e_8$, }  

\medskip
\noindent
for $i=17$ and $b \neq 0$ 

\medskip
\noindent
$e_1+a_1 e_2+a_2 e_6+a_3 ((b-1) e_5+b e_8)$, \ $e_3+b_1 e_2+b_2 e_6+b_3 ((b-1) e_5+b e_8)$,

\medskip
\noindent
$e_4+c_1 e_2+c_2 e_6+c_3 ((b-1) e_5+b e_8)$, \ $e_5+d_1 e_2+d_2 e_6+d_3 ((b-1) e_5+b e_8)$, 

\medskip
\noindent
\centerline{$e_7+f_1 e_2+f_2 e_6+f_3 ((b-1) e_5+b e_8)$, }  

\medskip
\noindent
for $i=17$ and $b=0$ 

\medskip
\noindent
$e_1+a_1 e_2+a_2 e_6-a_3 e_5$, \ $e_3+b_1 e_2+b_2 e_6-b_3 e_5$, \ 
$e_4+c_1 e_2+c_2 e_6-c_3 e_5$, 

\medskip
\noindent
\centerline{$e_7+d_1 e_2+d_2 e_6-d_3 e_5$, \ 
$e_8+f_1 e_2+f_2 e_6-f_3 e_5$, } 

\medskip
\noindent
for $i=18$ 

\medskip
\noindent
\centerline{$e_1+a_1 e_3+a_2(e_5+c e_8)+a_3 e_6$, \ $e_2+b_1 e_3+b_2(e_5+c e_8)+b_3 e_6$, }

\medskip
\noindent
\centerline{$e_4+c_1 e_3+c_2(e_5+c e_8)+c_3 e_6$, \ $e_7+d_1 e_3+d_2(e_5+c e_8)+d_3 e_6$, }

\medskip
\noindent
\centerline{$e_8+f_1 e_3+f_2(e_5+c e_8)+f_3 e_6$, } 

\medskip
\noindent
where $a_j, b_j, c_j, d_j, f_j \in \mathbb R$, $j=1,2,3$. Using the relation 
$[{\bf h_i},{\bf m_i}] \subseteq {\bf m_i}$, $i=6, \cdots ,18$, we obtain the assertion. 
\end{proof}

\begin{Prop} \label{sl3R2dim} The Lie algebra ${\bf g}=\mathfrak{sl_3}(\mathbb R)$ is 
reductive with respect to a  pair $({\bf h}, {\bf m})$, where ${\bf h}$ 
is a $2$-dimensional subalgebra of ${\bf g}$ and ${\bf m}$ is a  complementary subspace to ${\bf h}$ 
 generating  ${\bf g}$  in exactly one of  the following cases:  

\medskip
\noindent
1) ${\bf h}_{26}=\langle  e_5, e_8 \rangle $  and 
${\bf m}_{26}=\langle  e_1, e_2, e_3, e_4, e_6, e_7 \rangle$.

\medskip
\noindent
2) ${\bf h}_{30}=\langle  e_5+e_8, e_6-e_7 \rangle $  and 
${\bf m}_{30}=\langle  e_1, e_2, e_3, e_4, e_5-e_8, e_6+e_7 \rangle$. 

\end{Prop}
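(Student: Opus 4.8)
The plan is to proceed exactly as in the proofs of Propositions \ref{sl3R4dim} and \ref{sl3R3dim}, running through the twelve $2$-dimensional subalgebras ${\bf h}_{19},\dots,{\bf h}_{30}$ listed above. For each ${\bf h}_i$ I first pick six of the basis vectors $e_1,\dots,e_8$ which, together with the two generators of ${\bf h}_i$, span ${\bf g}=\mathfrak{sl_3}(\mathbb R)$ (for the subalgebras whose defining generators mix several $e_j$, such as ${\bf h}_{20},{\bf h}_{22},{\bf h}_{27},{\bf h}_{29},{\bf h}_{30}$, it is convenient first to replace ${\bf h}_i$ by an adapted basis). A general complement ${\bf m}_i$ to ${\bf h}_i$ then has as generators these six vectors, each corrected by an arbitrary real linear combination of the two generators of ${\bf h}_i$; in the one-parameter case ${\bf h}_{22}=\langle e_3,e_5+ae_8\rangle$ the scalar $a$ is carried symbolically throughout.

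Next I impose the reductivity relation $[{\bf h}_i,{\bf m}_i]\subseteq{\bf m}_i$: for each of the two generators $h$ of ${\bf h}_i$ and each generator $v$ of ${\bf m}_i$ I compute $[h,v]$ from the multiplication table of $\mathfrak{sl_3}(\mathbb R)$ given above and require that its ${\bf h}_i$-component in the splitting ${\bf g}={\bf m}_i\oplus{\bf h}_i$ vanish. Since $\mathrm{ad}(h)$ is linear and $[h,{\bf h}_i]\subseteq{\bf h}_i$, these are polynomial conditions in the free parameters, and solving them is a finite elementary calculation. For ten of the twelve subalgebras the resulting system is inconsistent, or its solutions yield an ${\bf m}_i$ that does not generate ${\bf g}$; as in the excluded cases of the earlier propositions the obstruction usually appears as $[h,v]\in{\bf h}_i\setminus\{0\}$, hence $[h,v]\notin{\bf m}_i$. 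For example, for ${\bf h}_{23}=\langle e_5,e_6\rangle$ the relations successively force $e_1,e_2,e_3,e_4$ and $e_8-e_5$ to be generators of ${\bf m}_{23}$, and then $[e_6,\,e_8-e_5]=2e_6\in{\bf h}_{23}$ is the contradiction; for ${\bf h}_{27}=\langle e_6,e_5+e_8\rangle$ the operator $\mathrm{ad}(e_6)$ is nilpotent on the zero-eigenspace $\langle e_5,e_8,e_6,e_7\rangle$ of $\mathrm{ad}(e_5+e_8)$ with kernel exactly ${\bf h}_{27}$, so a dimension count shows ${\bf h}_{27}$ admits no invariant complement.

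The two survivors are ${\bf h}_{26}=\langle e_5,e_8\rangle$ and ${\bf h}_{30}=\langle e_5+e_8,\,e_6-e_7\rangle$. For ${\bf h}_{26}$, a maximal split torus of $\mathfrak{sl_3}(\mathbb R)$ equal to its own centralizer, $\mathrm{ad}\,{\bf h}_{26}$ decomposes ${\bf g}$ as ${\bf h}_{26}$ together with the six one-dimensional root spaces $\langle e_1\rangle,\dots,\langle e_4\rangle,\langle e_6\rangle,\langle e_7\rangle$ carrying pairwise distinct nonzero weights; hence the only ${\bf h}_{26}$-invariant complement is ${\bf m}_{26}=\langle e_1,e_2,e_3,e_4,e_6,e_7\rangle$, and $[e_1,e_3]=-e_5$ together with $[e_6,e_7]=e_5-e_8$ shows that it generates ${\bf g}$. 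For ${\bf h}_{30}$ one reads off from the table that $\mathrm{ad}(e_5+e_8)$ has eigenspaces $\langle e_1,e_2\rangle$, $\langle e_3,e_4\rangle$ and the $4$-dimensional zero-eigenspace $\langle e_5,e_8,e_6+e_7,e_6-e_7\rangle$, on which $\mathrm{ad}(e_6-e_7)$ has no real eigenvector off ${\bf h}_{30}$; this forces the reductive complement to be ${\bf m}_{30}=\langle e_1,e_2,e_3,e_4,\,e_5-e_8,\,e_6+e_7\rangle$, and $[e_1,e_3]=-e_5$ together with $[e_5,\,e_6+e_7]=e_6-e_7$ then recovers all of ${\bf g}$, so ${\bf m}_{30}$ generates as well. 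A direct check against the multiplication table confirms $[{\bf h}_i,{\bf m}_i]\subseteq{\bf m}_i$ in both cases.

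The only real difficulty is bookkeeping: twelve subalgebras, up to twelve bracket relations apiece in several real unknowns, together with the symbolic parameter $a$ in ${\bf h}_{22}$ and the need to choose adapted bases for the subalgebras whose definitions mix several basis vectors. No single step is deep, and correctness is safeguarded by the same internal consistency checks used in Propositions \ref{sl3R4dim} and \ref{sl3R3dim}.
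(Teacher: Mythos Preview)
Your proposal is correct and follows essentially the same approach as the paper: for each of the twelve subalgebras ${\bf h}_{19},\dots,{\bf h}_{30}$ you write a general complement with free parameters and impose $[{\bf h}_i,{\bf m}_i]\subseteq{\bf m}_i$, exactly as the paper does. The paper simply lists the parametrized generators of each ${\bf m}_i$ and concludes with ``the relation $[{\bf h}_i,{\bf m}_i]\subseteq{\bf m}_i$ yields the assertion,'' whereas you add helpful structural shortcuts (root-space decomposition for the Cartan ${\bf h}_{26}$, the nilpotent-kernel argument for ${\bf h}_{27}$, and the eigenspace analysis for ${\bf h}_{30}$); these are pleasant embellishments but not a different method.
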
 
\begin{proof} An arbitrary complement ${\bf m}_i$ to the subalgebra ${\bf h}_i$, 
$i=19,\cdots ,30,$ in ${\bf g}$ has as generators in the case $i=19$ 

\medskip
\noindent
\centerline{$e_1+b_1 e_6+ c_1(e_2+e_3)$, \  $e_2+b_2 e_6+ c_2(e_2+e_3)$, \  $e_4+b_3 e_6+ c_3(e_2+e_3)$ }

\medskip
\noindent
\centerline{$e_5+b_4 e_6+ c_4(e_2+e_3)$, \  $e_7+b_5 e_6+ c_5(e_2+e_3)$, \  $e_8+b_6 e_6+ c_6(e_2+e_3)$, }

\medskip
\noindent
in the case $i=20$ 

\medskip
\noindent
\centerline{$e_1+b_1 e_6+ c_1(e_2+e_8)$, \  $e_2+b_2 e_6+ c_2(e_2+e_8)$, \  
$e_3+b_3 e_6+ c_3(e_2+e_8)$, }

\medskip
\noindent
\centerline{$e_4+b_4 e_6+ c_4(e_2+e_8)$, \  $e_5+b_5 e_6+ c_5(e_2+e_8)$, \  $e_7+b_6 e_6+ c_6(e_2+e_8)$,}

\medskip
\noindent
in the case $i=21$ 

\medskip
\noindent
\centerline{$e_1+b_1 e_3+ c_1(e_6+e_5)$, \  $e_2+b_2 e_3+ c_2(e_6+e_5)$, \  $e_4+b_3 e_3+ c_3(e_6+e_5)$,}

\medskip
\noindent
\centerline{$e_5+b_4 e_3+ c_4(e_6+e_5)$, \  $e_7+b_5 e_3+ c_5(e_6+e_5)$, \  $e_8+b_6 e_3+ c_6(e_6+e_5)$,}

\medskip
\noindent
in the case $i=22$ 

\medskip
\noindent
\centerline{$e_1+b_1 e_3+ c_1(e_5+a e_8)$, \  $e_2+b_2 e_3+ c_2(e_5+a e_8)$, \ $e_4+b_3 e_3+ c_3(e_5+a e_8)$,}

\medskip
\noindent
\centerline{$e_6+b_4 e_3+ c_4(e_5+a e_8)$, \  $e_7+b_5 e_3+ c_5(e_5+a e_8)$, \  $e_8+b_6 e_3+ c_6(e_5+a e_8)$,}

\medskip
\noindent
in the case $i=23$ 

\medskip
\noindent
\centerline{$e_1+b_1 e_5+ c_1 e_6$, \ $e_2+b_2 e_5+ c_2 e_6$, \ $e_3+b_3 e_5+ c_3 e_6$,}

\medskip
\noindent
\centerline{$e_4+b_4 e_5+ c_4 e_6$, \ $e_7+b_5 e_5+ c_5 e_6$, \  $e_8+b_6 e_5+ c_6 e_6$,}

\medskip
\noindent
in the case $i=24$ 

\medskip
\noindent
\centerline{$e_1+b_1 e_2+ c_1 e_6$, \ $e_3+b_2 e_2+ c_2 e_6$, \ $e_4+b_3 e_2+ c_3 e_6$,}

\medskip
\noindent
\centerline{$e_5+b_4 e_2+ c_4 e_6$, \ $e_7+b_5 e_2+ c_5 e_6$, \  $e_8+b_6 e_2+ c_6 e_6$,}

\medskip
\noindent
in the case $i=25$ 

\medskip
\noindent
\centerline{$e_1+b_1 e_3+ c_1 e_6$, \ $e_2+b_2 e_3+ c_2 e_6$, \ $e_4+b_3 e_3+ c_3 e_6$,}

\medskip
\noindent
\centerline{$e_5+b_4 e_3+ c_4 e_6$, \ $e_7+b_5 e_3+ c_5 e_6$, \  $e_8+b_6 e_3+ c_6 e_6$,}

\medskip
\noindent
in the case $i=26$ 

\medskip
\noindent
\centerline{$e_1+b_1 e_5+ c_1 e_8$, \ $e_2+b_2 e_5+ c_2 e_8$, \ $e_3+b_3 e_5+ c_3 e_8$,}

\medskip
\noindent
\centerline{$e_4+b_4 e_5+ c_4 e_8$, \ $e_6+b_5 e_5+ c_5 e_8$, \  $e_7+b_6 e_5+ c_6 e_8$,}

\medskip
\noindent
in the case $i=27$ 

\medskip
\noindent
\centerline{$e_1+b_1 e_6+ c_1(e_5+e_8)$, \ $e_2+b_2 e_6+ c_2(e_5+e_8)$ \ $e_3+b_3 e_6+ c_3(e_5+e_8)$,}

\medskip
\noindent
\centerline{$e_4+b_4 e_6+ c_4(e_5+e_8)$, \ $e_5+b_5 e_6+ c_5(e_5+e_8)$, \  $e_7+b_6 e_6+ c_6(e_5+e_8)$,}

\medskip
\noindent
in the case $i=28$ 

\medskip
\noindent
\centerline{$e_1+b_1 e_6+ c_1 e_8$, \ $e_2+b_2 e_6+ c_2 e_8$ \ $e_3+b_3 e_6+ c_3 e_8$,}

\medskip
\noindent
\centerline{$e_4+b_4 e_6+ c_4 e_8$, \ $e_5+b_5 e_6+ c_5 e_8$, \  $e_7+b_6 e_6+ c_6 e_8$,}

\medskip
\noindent
in the case $i=29$ 

\medskip
\noindent
\centerline{$e_1+b_1(e_2+e_3)+ c_1(e_5-e_8)$, \ $e_2+b_2(e_2+e_3)+ c_2(e_5-e_8)$, } 

\medskip
\noindent
\centerline{$e_4+b_3(e_2+e_3)+ c_3(e_5-e_8)$, \ $e_5+b_4(e_2+e_3)+ c_4(e_5-e_8)$, }

\medskip
\noindent
\centerline{$e_6+b_5(e_2+e_3)+ c_5(e_5-e_8)$, \ $e_7+b_6(e_2+e_3)+ c_6(e_5-e_8)$, }

\medskip
\noindent
in the case $i=30$ 

\medskip
\noindent
\centerline{$e_1+b_1(e_5+e_8)+ c_1(e_6-e_7)$, \ $e_2+b_2(e_5+e_8)+ c_2(e_6-e_7)$, } 

\medskip
\noindent
\centerline{$e_3+b_3(e_5+e_8)+ c_3(e_6-e_7)$, \ $e_4+b_4(e_5+e_8)+ c_4(e_6-e_7)$, }

\medskip
\noindent
\centerline{$e_5+b_5(e_5+e_8)+ c_5(e_6-e_7)$, \  $e_6+b_6(e_5+e_8)+ c_6(e_6-e_7)$,}

\medskip
\noindent
where $b_j, c_j \in \mathbb R$, $j=1, \cdots , 6$. The relation $[{\bf h_i},{\bf m_i}] 
\subseteq {\bf m_i}$, $i=19, \cdots ,30,$ yields the assertion. 
\end{proof}

\begin{Prop} \label{sl3R1dim} The Lie algebra ${\bf g}=\mathfrak{sl_3}(\mathbb R)$ is 
reductive with a $1$-dimensional subalgebra ${\bf h}$   and a 
$7$-dimensional complementary subspace ${\bf m}$ generating ${\bf g}$ in precisely 
one of the following 
cases:   

\medskip
\noindent
1) ${\bf h}_{31,1}=\langle  e_5+ a e_8 \rangle $, \ $a \in \mathbb R \backslash \{0,1,-\frac{1}{2},-2 \}$ and 
\newline
${\bf m}_{b}=\langle e_1, e_2, e_3, e_4, e_6, e_7, 
e_8+b(e_5+a e_8) \rangle$, $b \in \mathbb R$,  

\medskip
\noindent
2) ${\bf h}_{31,2}=\langle  e_5-2 e_8 \rangle $ and 
\newline
${\bf m}_{b,c,d}=\langle e_6, e_7, e_1+b (e_5-2 e_8), e_3+c (e_5-2 e_8), e_2, e_4, e_8+d(e_5-2 e_8)  \rangle$, $b,c,d \in \mathbb R$,  

\medskip
\noindent
3) ${\bf h}_{31,3}=\langle  e_5-\frac{1}{2} e_8 \rangle $ and 
\newline
${\bf m}_{b,c,d}=\langle e_6, e_7, e_1, e_2+b (e_5-\frac{1}{2} e_8), e_3, e_4+c (e_5-\frac{1}{2} e_8), e_8+d(e_5-\frac{1}{2} e_8)  \rangle$, $b,c,d \in \mathbb R$,

\medskip
\noindent
4) ${\bf h}_{31,4}=\langle  e_5+ e_8 \rangle $ and 
\newline
${\bf m}_{b,c,d}=\langle  e_1, e_2, e_3, e_4, 
e_6+b (e_5+ e_8), e_7+c (e_5+ e_8), e_8+d(e_5+ e_8)  \rangle$, $b,c,d \in \mathbb R$,

\medskip
\noindent
5) ${\bf h}_{32}=\langle  e_2+ e_8 \rangle $ and 
${\bf m}_d=\langle  e_1, e_2, e_3, -e_8+2 e_4, e_6, e_7, e_5+d e_8  \rangle$, $d \in \mathbb R$,

\medskip
\noindent
6) ${\bf h}_{35}=\langle  e_6-e_7+ b(e_5+ e_8) \rangle $, $b \ge 0$ and $
\newline
{\bf m}_c=\langle e_1,e_2,e_3,e_4,e_6+e_7,e_5-e_8,e_8-2c e_7+2c b e_8  \rangle$, $c \in \mathbb R$. 
\end{Prop}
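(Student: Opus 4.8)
The strategy is the same brute-force reductivity computation already carried out in Propositions \ref{sl3R4dim}--\ref{sl3R2dim}, but now applied to the five one-dimensional subalgebras ${\bf h}_{31},\dots,{\bf h}_{35}$, with the complication that ${\bf h}_{31}$ splits into several subcases according to the value of the parameter $a$. First I would write down, for each ${\bf h}_k$ ($k=31,\dots,35$), the general $7$-dimensional complementary subspace ${\bf m}_k$: pick seven of the basis vectors $e_1,\dots,e_8$ that are linearly independent modulo the generator $h$ of ${\bf h}_k$ and add to each an arbitrary scalar multiple of $h$, exactly as in the earlier propositions. Then I would impose the single reductivity condition $[h,{\bf m}_k]\subseteq{\bf m}_k$, i.e.\ $\operatorname{ad}h$ must stabilise the subspace ${\bf m}_k$; since $\dim{\bf g}=8$ and $\dim{\bf h}_k=1$, this is equivalent to saying $\operatorname{ad}h$ descends to the quotient ${\bf g}/{\bf m}_k\cong{\bf h}_k$ trivially, or more practically, that for each listed generator $v_i+(\text{coeff})h$ the bracket $[h,v_i]$ has no component along the \emph{excluded} basis vector modulo ${\bf m}_k$.

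The case ${\bf h}_{31}=\langle e_5+ae_8\rangle$ is the heart of the matter and is where the parameter analysis lives. Using the multiplication table, $\operatorname{ad}(e_5+ae_8)$ acts diagonally (in the chosen basis) on $e_1,e_2,e_3,e_4,e_6,e_7$ with eigenvalues that are affine-linear in $a$, namely on $e_1$ by $-(2+a)$, on $e_2$ by $-(1+2a)$, on $e_3$ by $2+a$, on $e_4$ by $1+2a$, on $e_6$ by $1-a$, on $e_7$ by $a-1$ (signs to be checked against the table), while $e_5,e_8$ span a block containing $h$ itself. Generic $a$ (avoiding the exceptional values $0,1,-\tfrac12,-2$ at which two of these eigenvalue patterns collide with the $e_5,e_8$-block, forcing a different excluded vector) gives case 1) with the one-parameter family ${\bf m}_b$; the four excluded values $a=1,-2,-\tfrac12,1$ (and the degenerate $a=0$, handled separately as it makes $\operatorname{ad}h$ non-semisimple on the $e_5,e_8$-block) produce cases 2)--4), where two extra free parameters $b,c$ appear because $\operatorname{ad}h$ acquires a two-dimensional kernel component that allows the corresponding basis vectors ($e_1,e_3$ for $a=-2$; $e_2,e_4$ for $a=-\tfrac12$; $e_6,e_7$ for $a=1$) to be shifted along $h$. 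For ${\bf h}_{32}=\langle e_2+e_8\rangle$ and ${\bf h}_{35}=\langle e_6-e_7+b(e_5+e_8)\rangle$, $\operatorname{ad}h$ is no longer diagonal, so I would compute its Jordan form, read off which basis vectors span $\operatorname{ad}h$-invariant complements, and check the generating condition (that ${\bf m}$ together with its brackets spans ${\bf g}$) — this rules out ${\bf h}_{33}$ and ${\bf h}_{34}$ entirely. In every surviving case one finally verifies that the displayed ${\bf m}$ does generate ${\bf g}$, which is immediate since ${\bf m}$ contains enough root vectors that their brackets recover $h$ and hence all of ${\bf g}$.

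The main obstacle is bookkeeping rather than conceptual: correctly enumerating, for ${\bf h}_{31}$, all coincidences among the eigenvalues $-(2+a),-(1+2a),2+a,1+2a,1-a,a-1,0$ of $\operatorname{ad}(e_5+ae_8)$ as $a$ varies — these coincidences are exactly the thresholds $a\in\{0,1,-2,-\tfrac12\}$ at which the reductive complement picks up extra free parameters or changes shape — and then, for the two non-diagonalisable cases ${\bf h}_{32},{\bf h}_{35}$, untangling the nilpotent part of $\operatorname{ad}h$ to see which coordinate shifts are admissible. I would organise the ${\bf h}_{31}$ computation as: (i) reductivity forces the coefficient of each excluded basis vector in $[h, v_i]$ to vanish, giving linear equations in the shift-parameters; (ii) solve them for generic $a$; (iii) redo the solution at each exceptional $a$, noting the jump in the dimension of the solution space; (iv) check that the resulting ${\bf m}$ is a genuine vector-space complement (no hidden linear dependence with $h$). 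The remaining cases ${\bf h}_{32},\dots,{\bf h}_{35}$ are shorter and dispatched the same way, with ${\bf h}_{33}$ and ${\bf h}_{34}$ eliminated because no $\operatorname{ad}h$-invariant complement exists, or the only invariant complement fails to generate ${\bf g}$.
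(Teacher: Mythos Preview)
Your plan is correct and follows exactly the paper's approach: write the generic complement to each ${\bf h}_i$ as seven basis vectors shifted by multiples of the generator $h$, impose $[{\bf h}_i,{\bf m}_i]\subseteq{\bf m}_i$, and solve. Your eigenvalue organisation for ${\bf h}_{31}$ is a helpful gloss the paper omits, and it correctly predicts the exceptional values $a\in\{1,-2,-\tfrac12\}$ at which extra free parameters appear.

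Two small corrections. First, $a=0$ is already excluded from ${\bf h}_{31}$ in the paper's list of one-dimensional subalgebras (it is conjugate to another member of the family), so there is nothing to handle there; moreover $[e_5,e_8]=0$, so $\operatorname{ad}e_5$ is zero, hence semisimple, on $\langle e_5,e_8\rangle$---your stated reason for its exclusion is off. Second, the generating condition is never the obstruction in this proposition: since ${\bf g}=\mathfrak{sl}_3(\mathbb R)$ has no $7$-dimensional subalgebra, every $7$-dimensional subspace generates ${\bf g}$. The subalgebras ${\bf h}_{33}$ and ${\bf h}_{34}$ are eliminated purely because $\operatorname{ad}h$ maps some complement vector back into ${\bf h}$ (e.g.\ $[e_6,e_5]=-e_6$ for ${\bf h}_{34}$, and $[e_2+e_3,\,e_5-e_8]=-(e_2+e_3)$ for ${\bf h}_{33}$), so no invariant complement exists.
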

\begin{proof} An arbitrary complement ${\bf m}_i$ to the subalgebra ${\bf h}_i$, 
$i=31,\cdots ,35,$ in ${\bf g}$ has as generators in the case $i=31$ 

\medskip
\noindent
\centerline{$e_1+a_1(e_5+a e_8)$, \ $e_2+a_2(e_5+a e_8)$, \ 
$e_3+a_3(e_5+a e_8)$, \  $e_4+a_4(e_5+a e_8)$, }

\medskip
\noindent
\centerline{ $e_6+a_5(e_5+a e_8)$, \  $e_7+a_6(e_5+a e_8)$, \ 
$e_8+a_7(e_5+a e_8)$, }

\medskip
\noindent
in the case $i=32$  

\medskip
\noindent
\centerline{$e_1+a_1(e_2+ e_8)$, \ $e_3+a_2(e_2+ e_8)$, \ $e_4+a_3(e_2+ e_8)$, \ $e_5+a_4(e_2+ e_8)$,}

\medskip
\noindent
\centerline{ $e_6+a_5(e_2+ e_8)$, \ $e_7+a_6(e_2+ e_8)$, \ $e_8+a_7(e_2+ e_8)$, }

\medskip
\noindent
in the case $i=33$  

\medskip
\noindent
\centerline{$e_1+a_1(e_2+ e_3)$, \ $e_3+a_2(e_2+ e_3)$, \ $e_4+a_3(e_2+ e_3)$, \ $e_5+a_4(e_2+ e_3)$,}

\medskip
\noindent
\centerline{ $e_6+a_5(e_2+ e_3)$, \ $e_7+a_6(e_2+ e_3)$, \ $e_8+a_7(e_2+ e_3)$,}

\medskip
\noindent
in the case $i=34$   

\medskip
\noindent
\centerline{$e_1+a_1 e_6$, \  $e_2+a_2 e_6$, \  $e_3+a_3 e_6$, \ $e_4+a_4 e_6$, \  $e_5+a_5 e_6$,}

\medskip
\noindent
\centerline{ $e_7+a_6 e_6$, \ $e_8+a_7 e_6$, }

\medskip
\noindent
in the case $i=35$   

\medskip
\noindent
\centerline{$e_1+a_1(e_6-e_7+b(e_5+ e_8))$, \  $e_2+a_2(e_6-e_7+b(e_5+ e_8))$, }

\medskip
\noindent
\centerline{$e_3+a_3(e_6-e_7+b(e_5+ e_8))$, \ $e_4+a_4(e_6-e_7+b(e_5+ e_8))$, }

\medskip
\noindent
\centerline{$e_5+a_5(e_6-e_7+b(e_5+ e_8))$, \ $e_7+a_6(e_6-e_7+b(e_5+ e_8))$, }

\medskip
\noindent
\centerline{$e_8+a_7(e_6-e_7+b(e_5+ e_8))$, }

\medskip
\noindent
where $a_j \in \mathbb R$, $j=1, \cdots 7$.  Using  the relation 
$[{\bf h_i},{\bf m_i}] \subseteq {\bf m_i}$, $i=31, \cdots ,35,$ we 
obtain the assertion. 
\end{proof}

\bigskip 
\noindent
Now we deal with  the Lie algebra $\mathfrak{su_3}(\mathbb C,1)$. 
It  can be treated as the Lie 
algebra of matrices 
\[(\lambda _1 e_1+\lambda _2 e_2+\lambda _3 e_3+\lambda _4 e_4+\lambda _5 e_5+\lambda _6 e_6+\lambda _7 e_7+\lambda _8 e_8) \mapsto \]
\[ \left ( \begin{array}{ccc}
- \lambda _1 i  & - \lambda _2 -\lambda _3 i & \lambda_4+ \lambda _5 i \\
 \lambda _2 -\lambda _3 i & \lambda _1 i+ \lambda _6 i & \lambda _7 +\lambda _8 i \\
 \lambda _4 -\lambda _5 i & \lambda _7 - \lambda_8 i  & - \lambda _6 i 
\end{array} \right ); \lambda_i \in 
\mathbb R, i=1,\cdots ,8. \] 
Then the  multiplication  of  ${\bf g}$ is given by the 
following: 
\[ [e_1,e_6]=0,\  [e_3,e_2]= 2 e_1,\  [e_4,e_5]= 2(e_1-e_6),\  
[e_8,e_7]=2 e_6, \]
\[ [e_6,e_3]=[e_7,e_4]=[e_8,e_5]=\frac{1}{2} [e_1,e_3]= e_2, \]
\[ [e_2,e_6]=[e_4,e_8]=[e_7,e_5]=\frac{1}{2} [e_2,e_1]=e_3, \]
\[ [e_7,e_2]=[e_3,e_8]=[e_5,e_6]= [e_1,e_5]=e_4, \] 
\[ [e_8,e_2]=[e_7,e_3]=[e_6,e_4]= [e_4,e_1]=e_5, \]
\[ [e_2,e_4]=[e_3,e_5]=[e_8,e_1]=\frac{1}{2} [e_8,e_6]=e_7, \] 
\[ [e_2,e_5]=[e_4,e_3]=[e_1,e_7]=\frac{1}{2} [e_6,e_7]=e_8.  \]
The normalized Cartan-Killing form  
$k:\mathfrak{su_3}(\mathbb C,1) \times 
\mathfrak{su_3}(\mathbb C,1) \to \mathbb R $ is the map 
$(X,Y) \mapsto \frac{1}{12} \hbox{trace} (\hbox{ad}X \hbox{ad}Y)=\frac{1}{2} \hbox{trace} (X Y)$. 
An element 
$X= \lambda _i e_i \in \mathfrak{su_3}(\mathbb C,1)$,  
$\lambda _i \in \mathbb R$, $i=1, \cdots ,8,$  is elliptic, parabolic or 
loxodromic according whether    

\medskip
\noindent
\centerline{$k(X)=k(X,X)=-\lambda _1 ^2-\lambda _2 ^2-\lambda _3 ^2-
\lambda _6 ^2+ \lambda _4 ^2+\lambda _5 ^2+\lambda _7 ^2+\lambda _8 ^2 - 
2 \lambda _1 \lambda _6$  } 
 
\medskip
\noindent
is smaller, equal or greater $0$. 

\medskip
\noindent
Let $H$ be a connected closed subgroup of the group  $PSU_3(\mathbb C,1)$. Then 
according to 
\cite{betten}, Satz 1, p. 251 and \cite{chen}, Section 5, p. 276, the group $H$ 
is,  up to conjugacy, one of the following: 
\newline
(1) $H$ is a subgroup of $Spin _3 \times SO_2(\mathbb R) / \langle (-1,-1) \rangle $, 
\newline
(2) $H$ is a subgroup of the $5$-dimensional solvable group $N G_{1,1}$ in \cite{betten}, 
p. 253, 
\newline
(3) $H$ is the group $SL_2(\mathbb R) \times SO_2(\mathbb R)/ \langle (-1,-1) \rangle $, 
\newline
(4) $H$ is the group $SL_2(\mathbb R) \times \{ 1 \}/ \langle (-1,1) \rangle \cong PSL_2(\mathbb R)$, 
\newline
(5) $H$ is the connected component of the group $SO_3(\mathbb R,1) \cong PSL_2(\mathbb R)$.  
\newline
The Lie algebras ${\bf h}_i$, $i=1, \cdots ,5,$ of  
$H$ in the cases (1) till (5) are given in this order  by 

\medskip
\noindent
\centerline{$\widehat{{\bf h}_1}= \langle e_1,e_2,e_3,e_6 \rangle $,  
$\widehat{{\bf h}_2}=\langle e_1-\frac{1}{2} e_6, e_8, e_4-e_3, e_5+e_2, e_6+e_7 
\rangle $, }
\centerline{$\widehat{{\bf h}_3}= \langle e_1,e_6,e_7,e_8 \rangle $, 
$\widehat{{\bf h}_4}=\langle e_6,e_7,e_8 \rangle$, 
$\widehat{{\bf h}_5}= \langle e_2,e_4,e_7 \rangle $. }

\medskip
\noindent
After a straightforward calculation in $\widehat{{\bf h}_2}$ we obtain that 
the conjugacy 
classes of the 
$4$-dimensional subalgebras of $\mathfrak{su_3}(\mathbb C,1)$ are 
the following:  

\medskip
\noindent  
${\bf h}_1= \langle e_1,e_2,e_3,e_6 \rangle ,$ \ \ 
${\bf h}_2=\langle e_4-e_3, e_2+e_5, e_6+e_7, e_8 \rangle,$ 

\medskip
\noindent
${\bf h}_3=\langle e_1-\frac{1}{2} e_6+a e_8, e_4-e_3, e_2+e_5, e_6+e_7 
\rangle$, \ \ ${\bf h}_4= \langle e_1,e_6,e_7,e_8 \rangle ,$

\medskip
\noindent
where $a \in \mathbb R$.  
\newline
Computations in $\widehat{{\bf h}_1}$ and $\widehat{{\bf h}_2}$ yield that 
the $3$-dimensional subalgebras of $\mathfrak{su_3}(\mathbb C,1)$ have one 
of the following shapes:

\medskip
\noindent  
\centerline{${\bf h}_5=\langle e_1,e_2,e_3 \rangle $, \ \ ${\bf h}_6=\langle e_2,e_4,e_7 \rangle $, \ \ 
${\bf h}_7=\langle e_6,e_7,e_8 \rangle$, } 

\medskip
\noindent  
\centerline{${\bf h}_8=\langle e_5+e_2, e_6+e_7, e_8 \rangle $, 
${\bf h}_9=\langle e_4-e_3+b e_8, e_5+e_2, e_6+e_7 \rangle $, }

\medskip
\noindent  
\centerline{${\bf h}_{10}=\langle e_4-e_3+b (e_5+ e_2), e_6+e_7, e_8+c (e_5+e_2) \rangle $,} 

\medskip
\noindent 
${\bf h}_{11}=\langle e_1-\frac{1}{2} e_6+\frac{3}{2} c(e_4-e_3)-\frac{3}{2} b(e_5+e_2), 
e_8+b (e_4-e_3)+c(e_5+e_2)$,
\newline $e_6+e_7 \rangle $, 
where $b,c \in \mathbb R$. 

\medskip
\noindent 
Similarly we obtain that every $2$-dimensional subalgebra of 
$\mathfrak{su_3}(\mathbb C,1)$ has one 
of the following 
forms: 

\medskip
\noindent  
\centerline{${\bf h}_{12}=\langle e_1,e_6 \rangle $, \ \ 
${\bf h}_{13}=\langle e_4-e_3, e_6+e_7 \rangle$, }

\medskip
\noindent  
\centerline{${\bf h}_{14}=\langle e_5+e_2+b(e_4-e_3),e_6+e_7 \rangle $, \ 
${\bf h}_{15}=\langle e_4-e_3, e_8+b(e_6+e_7) \rangle$, }

\medskip
\noindent
\centerline{${\bf h}_{16}=\langle e_5+e_2+b(e_4-e_3), e_8+c(e_6+e_7) \rangle$, }

\medskip
\noindent
\centerline{${\bf h}_{17}=\langle e_6+e_7,  e_8+b(e_4-e_3)+c(e_5+e_2) \rangle$, }

\medskip
\noindent
\centerline{${\bf h}_{18}=\langle e_6+e_7+a(e_4-e_3)+b(e_5+e_2),  e_8+c(e_4-e_3)+d(e_5+e_2) \rangle$,} 

\medskip
\noindent
\centerline{${\bf h}_{19}=\langle e_1-\frac{1}{2} e_6+a e_8+b(e_4-e_3)+c(e_5+e_2), e_6+e_7 \rangle$, }

\medskip
\noindent
\centerline{${\bf h}_{20}=\langle e_1-\frac{1}{2} e_6+\frac{3}{2} a(e_4-e_3)-\frac{3}{2} b(e_5+e_2)-\frac{3(a^2+b^2)}{2}(e_6+e_7)$, } \centerline{$e_8+b(e_4-e_3)+a(e_5+e_2)+c(e_6+e_7) \rangle $, }

\medskip
\noindent
where $a,b,c,d \in \mathbb R$ and in the Lie algebra ${\bf h}_{18}$ one has 
$bc-ad=\frac{1}{2}$. 

\medskip
\noindent
Moreover, every $1$-dimensional subalgebra  ${\bf h}$ of ${\bf g}$ is given by 

\medskip
\noindent
\centerline{${\bf h}_{21}=\langle e_1+a e_6 \rangle $,  \ \ 
${\bf h}_{22}=\langle e_6 \rangle $, \ \  
${\bf h}_{23}=\langle e_8 \rangle $, } 

\medskip
\noindent
\centerline{${\bf h}_{24}=\langle e_6+e_7+c e_8 \rangle$, \ \ 
${\bf h}_{25}=\langle e_5+e_2+b(e_6+e_7)+c e_8 \rangle$, } 

\medskip
\noindent
\centerline{${\bf h}_{26}=\langle e_4-e_3+a(e_5+e_2)+b(e_6+e_7)+c e_8 \rangle$, }

\medskip
\noindent
\centerline{${\bf h}_{27}=\langle e_1-\frac{1}{2} e_6+d(e_4-e_3)+a(e_5+e_2)+b(e_6+e_7)+c e_8 \rangle$, } 

\medskip
\noindent
where $a,b,c,d$ are real numbers. 

\begin{Prop} \label{su34dim} The Lie algebra $\mathfrak{su_3}(\mathbb C,1)$ 
is reductive 
with a $4$-dimensional 
subalgebra ${\bf h}$ and a complementary subspace ${\bf m}$ generating 
${\bf g}$ if and only if the following holds: 

\medskip
\noindent
1) ${\bf h}_1 \cong \mathfrak{so_3}(\mathbb R) \oplus \mathfrak{so_2}(\mathbb R)= 
\langle e_1,e_2,e_3,e_6 \rangle $ and 
${\bf m}_1=\langle e_4,e_5,e_7,e_8 \rangle $,  

\medskip
\noindent
2) ${\bf h}_4 \cong \mathfrak{sl_2}(\mathbb R) \oplus \mathfrak{so_2}(\mathbb R)= 
\langle e_1,e_6,e_7,e_8 \rangle $ and 
${\bf m}_4=\langle e_2,e_3,e_4,e_5 \rangle $. 
\end{Prop}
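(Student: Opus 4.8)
The plan is to go through the four conjugacy classes $\mathbf{h}_1,\dots,\mathbf{h}_4$ of $4$-dimensional subalgebras of $\mathbf{g}=\mathfrak{su_3}(\mathbb C,1)$ collected above, in exactly the manner of Propositions \ref{sl2c}--\ref{sl3R1dim}. For each index $i$ I would choose four basis vectors of $\mathbf{g}$ that are linearly independent modulo $\mathbf{h}_i$, add to each of them an arbitrary linear combination of the generators of $\mathbf{h}_i$ (introducing sixteen real parameters), thereby obtaining the generators of an arbitrary complement $\mathbf{m}_i$, and then impose the reductivity relation $[\mathbf{h}_i,\mathbf{m}_i]\subseteq\mathbf{m}_i$, which becomes a linear system in these parameters read off from the multiplication table of $\mathbf{g}$.

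For the two surviving classes I would argue as follows. The subalgebra $\mathbf{h}_1=\langle e_1,e_2,e_3,e_6\rangle$ is isomorphic to $\mathfrak{so_3}(\mathbb R)\oplus\mathfrak{so_2}(\mathbb R)$ (the $\mathfrak{so_3}$-summand being $\langle e_1,e_2,e_3\rangle$ and the centre $\langle e_6-\frac{1}{2}e_1\rangle$), and $\mathbf{h}_4=\langle e_1,e_6,e_7,e_8\rangle$ is isomorphic to $\mathfrak{sl_2}(\mathbb R)\oplus\mathfrak{so_2}(\mathbb R)$ (with $\mathfrak{sl_2}$-summand $\langle e_6,e_7,e_8\rangle$ and centre $\langle e_1-\frac{1}{2}e_6\rangle$); both are reductive Lie algebras, so an invariant complement exists at all. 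Evaluating the brackets of the generators of $\mathbf{h}_1$, resp. of $\mathbf{h}_4$, on the candidate vectors, the linear system pins down $\mathbf{m}_1=\langle e_4,e_5,e_7,e_8\rangle$, resp. $\mathbf{m}_4=\langle e_2,e_3,e_4,e_5\rangle$, as the unique solution; the reason behind the uniqueness is that $\mathbf{m}_i$ and $\mathbf{h}_i$ share no irreducible constituent as $\mathbf{h}_i$-modules, so the module homomorphism measuring the deviation of an invariant complement from $\mathbf{m}_i$ must vanish. It then remains to check that these complements generate $\mathbf{g}$: from $[e_4,e_5]=2(e_1-e_6)$ and $[e_8,e_7]=2e_6$ one gets $e_1$ and $e_6$ into the subalgebra generated by $\mathbf{m}_1$, whence $[e_4,e_7]=-e_2$ and $[e_4,e_8]=e_3$ show that $\mathbf{m}_1$ generates $\mathbf{g}$; similarly $[e_3,e_2]=2e_1$, $[e_4,e_5]=2(e_1-e_6)$, $[e_2,e_4]=e_7$ and $[e_2,e_5]=e_8$ show that $\mathbf{m}_4$ generates $\mathbf{g}$.

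The remaining two classes are the solvable ones: both $\mathbf{h}_2=\langle e_4-e_3,\,e_2+e_5,\,e_6+e_7,\,e_8\rangle$ and $\mathbf{h}_3=\langle e_1-\frac{1}{2}e_6+ae_8,\,e_4-e_3,\,e_2+e_5,\,e_6+e_7\rangle$ have the $3$-dimensional Heisenberg algebra $\mathbf{n}=\langle e_4-e_3,\,e_2+e_5,\,e_6+e_7\rangle$ as nilradical, with $[e_4-e_3,\,e_2+e_5]=-2(e_6+e_7)$ and $\operatorname{ad}(e_6+e_7)$ nilpotent on $\mathbf{g}$. For these I would write down an arbitrary complement $\mathbf{m}_i$ and impose $[\mathbf{h}_i,\mathbf{m}_i]\subseteq\mathbf{m}_i$, treating first invariance under the central generator $e_6+e_7$ of $\mathbf{n}$ and under the dilation generator ($e_8$, resp. $e_1-\frac{1}{2}e_6+ae_8$), and then under the remaining two nilpotent generators; tracking the sixteen parameters through these relations one is forced to a contradiction of the familiar type, namely that some bracket $[X,Y]$ with $X$ a generator of $\mathbf{h}_i$ and $Y$ a generator of $\mathbf{m}_i$ necessarily acquires a nonzero component in $\mathbf{h}_i$, exactly as the identities $[e_2+e_3,X_1]\in\mathbf{h}_1$ and $[e_2+e_3,Y_1]\in\mathbf{h}_2$ produced the contradictions in Proposition \ref{sl2c}. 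I expect the obstruction to originate in the Heisenberg pair $(e_4-e_3,\,e_2+e_5)$, whose bracket lies in $\langle e_6+e_7\rangle\subseteq\mathbf{h}_i$. This solvable case analysis — carrying the parameters through the Heisenberg and dilation identities — is the only laborious step; the rest is the bracket table of $\mathbf{g}$ together with the module‑theoretic rigidity used in the $\mathbf{h}_1$ and $\mathbf{h}_4$ cases.
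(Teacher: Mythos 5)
Your proposal follows essentially the same route as the paper: for each of the four conjugacy classes $\mathbf{h}_1,\dots,\mathbf{h}_4$ one writes down an arbitrary complement with sixteen real parameters and imposes $[\mathbf{h}_i,\mathbf{m}_i]\subseteq\mathbf{m}_i$, which singles out $\mathbf{m}_1$ and $\mathbf{m}_4$ and eliminates the two solvable classes (your bracket computations, e.g.\ $[e_4-e_3,e_2+e_5]=-2(e_6+e_7)$ and the generation of $\mathbf{g}$ by $\mathbf{m}_1$ and $\mathbf{m}_4$, all check against the multiplication table). The module-theoretic uniqueness argument for $\mathbf{h}_1,\mathbf{h}_4$ and the observation that the obstruction for $\mathbf{h}_2,\mathbf{h}_3$ comes from the nilpotent central element $e_6+e_7$ of the Heisenberg nilradical are correct refinements that the paper leaves implicit, but they do not change the method.
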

\begin{proof}  For the basis elements of an arbitrary complement 
${\bf m}$ to ${\bf h}_1$ in ${\bf g}$ 
we have 

\medskip
\noindent
\centerline{$e_4+a_1 e_1+b_1 e_2+c_1 e_3+d_1 e_6$, \ \ 
$e_5+a_2 e_1+b_2 e_2+c_2 e_3+d_2 e_6$, }

\medskip
\noindent
\centerline{$e_7+a_3 e_1+b_3 e_2+c_3 e_3+d_3 e_6$, \ \ 
$e_8+a_4 e_1+b_4 e_2+c_4 e_3+d_4 e_6$ }

\medskip
\noindent
with the real numbers $a_i, b_i, c_i, d_i$, $i=1,2,3,4$.
\newline
An arbitrary complement ${\bf m}$ to ${\bf h}_2$ in ${\bf g}$ has 
as generators 

\medskip
\noindent
\centerline{$e_1+a_1(e_4-e_3)+b_1(e_5+e_2)+c_1(e_6+e_7)+d_1 e_8$, }

\medskip
\noindent
\centerline{$e_2+a_2(e_4-e_3)+b_2(e_5+e_2)+c_2(e_6+e_7)+d_2 e_8$, } 

\medskip
\noindent
\centerline{$e_3+a_3(e_4-e_3)+b_3(e_5+e_2)+c_3(e_6+e_7)+d_3 e_8$, } 

\medskip
\noindent
\centerline{$e_6+a_4(e_4-e_3)+b_4(e_5+e_2)+c_4(e_6+e_7)+d_4 e_8$, } 

\medskip
\noindent
where $a_i, b_i, c_i, d_i$, $i=1,2,3,4$ are real parameters.
\newline
The basis elements of an arbitrary complement ${\bf m}$ to ${\bf h}_3$ 
in ${\bf g}$ are 

\medskip
\noindent
\centerline{$e_3+a_1(e_1-\frac{1}{2}e_6+ae_8)+b_1(e_4-e_3)+c_1(e_2+e_5)+d_1(e_6+e_7)$, }

\medskip
\noindent
\centerline{$e_5+a_2(e_1-\frac{1}{2}e_6+ae_8)+b_2(e_4-e_3)+c_2(e_2+e_5)+d_2(e_6+e_7)$, }

\medskip
\noindent
\centerline{$e_7+a_3(e_1-\frac{1}{2}e_6+ae_8)+b_3(e_4-e_3)+c_3(e_2+e_5)+d_3(e_6+e_7)$, }

\medskip
\noindent
\centerline{$e_8+a_4(e_1-\frac{1}{2}e_6+ae_8)+b_4(e_4-e_3)+c_4(e_2+e_5)+d_4(e_6+e_7)$, }

\medskip
\noindent
where $a_i, b_i, c_i, d_i \in \mathbb R$, $i=1,2,3,4$. 
\newline
As the generators of an arbitrary complement ${\bf m}$ to ${\bf h}_4$ 
in ${\bf g}$ we can choose the following: 

\medskip
\noindent
\centerline{$e_2+a_1 e_1+b_1 e_6+c_1 e_7+d_1 e_8$, \ \ 
$e_3+a_2 e_1+b_2 e_6+c_2 e_7+d_2 e_8$, }

\medskip
\noindent
\centerline{$e_4+a_3 e_1+b_3 e_6+c_3 e_7+d_3 e_8$, \ \ 
$e_5+a_4 e_1+b_4 e_6+c_4 e_6+d_4 e_8$, }

\medskip
\noindent
where $a_i, b_i, c_i, d_i$, $i=1,2,3,4$ are  real numbers.
\newline
Now the assertion follows from the relation 
$[{\bf h},{\bf m}] \subseteq {\bf m}$.  
\end{proof}

\begin{Prop} \label{su33dim} The Lie algebra ${\bf g}=\mathfrak{su_3}(\mathbb C,1)$ is 
reductive with respect to precisely one of the 
following pairs $({\bf h}, {\bf m})$, where ${\bf h}$ is 
a $3$-dimensional 
subalgebra of ${\bf g}$ and ${\bf m}$ is a  complementary  subspace  to ${\bf h}$ 
generating  ${\bf g}$: 

\medskip
\noindent 
1) ${\bf h}_6 \cong \mathfrak{sl_2}(\mathbb R)=\langle e_2, e_4, e_7 \rangle $ and 
${\bf m}_6=\langle e_1, e_3, e_5, e_6, e_8 \rangle$, 

\medskip
\noindent 
2) ${\bf h}_7 \cong \mathfrak{sl_2}(\mathbb R)=\langle e_6,e_7,e_8 \rangle $ and  
${\bf m}_7=\langle e_1-\frac{1}{2} e_6, e_2, e_3, e_4, e_5 \rangle$. 
\end{Prop}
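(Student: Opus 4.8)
The plan is to proceed exactly as in Propositions~\ref{sl3R3dim} and~\ref{sl2c}. I would run through the seven conjugacy classes ${\bf h}_5,\dots,{\bf h}_{11}$ of $3$-dimensional subalgebras of ${\bf g}=\mathfrak{su_3}(\mathbb C,1)$ listed above the proposition and, for each of them, decide whether there is a complementary subspace ${\bf m}$ with $[{\bf h},{\bf m}]\subseteq{\bf m}$ that generates ${\bf g}$. For a fixed ${\bf h}_i$ one chooses five basis vectors $e_{j_1},\dots,e_{j_5}$ whose residue classes span ${\bf g}/{\bf h}_i$ and writes the generic complement ${\bf m}_i$ by adjoining to each $e_{j_k}$ an arbitrary real linear combination of the three generators of ${\bf h}_i$. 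Then $[{\bf h}_i,{\bf m}_i]\subseteq{\bf m}_i$ becomes the demand that each of the fifteen brackets $[\text{generator of }{\bf h}_i,\ \text{generator of }{\bf m}_i]$, expanded with the structure constants of $\mathfrak{su_3}(\mathbb C,1)$ recorded above, have vanishing ${\bf h}_i$-component in the splitting ${\bf g}={\bf h}_i\oplus{\bf m}_i$; this yields an explicit polynomial system in the fifteen parameters, which I would solve.

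For the semisimple subalgebras ${\bf h}_5\cong\mathfrak{so_3}(\mathbb R)=\langle e_1,e_2,e_3\rangle$ and ${\bf h}_6\cong{\bf h}_7\cong\mathfrak{sl_2}(\mathbb R)$ the restriction of the normalized Cartan--Killing form $k$ of ${\bf g}$ is nondegenerate, so the $k$-orthogonal complement ${\bf h}_i^{\perp}$ is $\mathrm{ad}({\bf h}_i)$-invariant and hence is a reductive complement. I would then check three things. First, this complement is the \emph{only} $\mathrm{ad}({\bf h}_i)$-invariant complement: the $\mathrm{ad}({\bf h}_i)$-module ${\bf g}/{\bf h}_i$ contains no summand isomorphic to the adjoint representation of ${\bf h}_i$ (for ${\bf h}_5$ it splits as the trivial line $\langle e_1-2e_6\rangle$ plus the irreducible $4$-dimensional isotropy module $\langle e_4,e_5,e_7,e_8\rangle$), so the system above has a unique solution. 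Secondly, a few one-line computations with $k$ — such as $k(e_1,e_6)=-\tfrac12$ and the vanishing of $k$ on the remaining pairs of distinct basis vectors — identify ${\bf h}_i^{\perp}$ with the subspaces ${\bf m}_6=\langle e_1,e_3,e_5,e_6,e_8\rangle$, ${\bf m}_7=\langle e_1-\tfrac12 e_6,e_2,e_3,e_4,e_5\rangle$ claimed in the statement (and with $\langle e_1-2e_6,e_4,e_5,e_7,e_8\rangle$ for ${\bf h}_5$). Thirdly, ${\bf h}_i^{\perp}$ generates ${\bf g}$: it suffices to exhibit brackets of generators of ${\bf h}_i^{\perp}$ which land in ${\bf h}_i$ — e.g. $[e_1,e_3]=2e_2$, $[e_5,e_6]=e_4$, $[e_8,e_1]=e_7$ for ${\bf h}_6$; $[e_2,e_4]=e_7$, $[e_2,e_5]=e_8$ for ${\bf h}_7$; $[e_4,e_7]=-e_2$, $[e_5,e_7]=-e_3$ for ${\bf h}_5$ — so that the generated subalgebra already contains all of ${\bf h}_i$ and therefore equals ${\bf g}$.

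For the remaining subalgebras ${\bf h}_8,\dots,{\bf h}_{11}$, which are solvable and so carry no invariant complement for free, the claim is that $[{\bf h}_i,{\bf m}_i]\subseteq{\bf m}_i$ has no solution at all. As in the earlier propositions one isolates, for a single generator $X$ of ${\bf m}_i$, a generator $Y\in{\bf h}_i$ such that after expanding $[Y,X]$ and reducing modulo ${\bf m}_i$ a basis direction of ${\bf h}_i$ survives with a nonzero coefficient independent of the parameters — a typical offender being the direction $e_6+e_7$ for ${\bf h}_8,{\bf h}_9,{\bf h}_{10}$ and $e_1-\tfrac12 e_6+\cdots$ for ${\bf h}_{11}$ — which is the contradiction sought. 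The one place where some care is needed is the treatment of the parameter families ${\bf h}_9,{\bf h}_{10},{\bf h}_{11}$: the offending coefficient must be shown to be nonzero uniformly in $b$ and $c$.

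Thus the difficulty is bookkeeping rather than anything conceptual: seven subalgebras, three of them parametrised, fifteen brackets each to reduce modulo the candidate complement. I would cut the labour by first diagonalising the adjoint action of a Cartan element of each ${\bf h}_i$ on ${\bf g}$, which makes the projection of a bracket onto its ${\bf h}_i$-component essentially mechanical. The only genuinely delicate point is verifying, in the three semisimple cases, that the unique reductive complement generates all of ${\bf g}$ and not merely a proper reductive subalgebra containing ${\bf h}_i$.
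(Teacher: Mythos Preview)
Your approach differs genuinely from the paper's. The paper's proof is a bare-hands parametrisation: for each ${\bf h}_i$ it writes out the generic complement with fifteen real unknowns and then declares that ``the relation $[{\bf h}_i,{\bf m}_i]\subseteq{\bf m}_i$ yields the assertion'', leaving all the linear algebra implicit. Your route --- take the Killing-orthogonal complement for the semisimple ${\bf h}_5,{\bf h}_6,{\bf h}_7$, prove uniqueness by checking that the adjoint representation does not occur in ${\bf g}/{\bf h}_i$, and then isolate a single obstruction bracket in the solvable cases --- is cleaner and explains \emph{why} the semisimple cases behave well, rather than merely verifying it.

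However, carrying your programme through does not prove the proposition as stated: it produces three semisimple cases, not two. For ${\bf h}_5=\langle e_1,e_2,e_3\rangle\cong\mathfrak{so}_3(\mathbb R)$ the Killing-orthogonal complement is ${\bf m}_5=\langle e_1-2e_6,\,e_4,\,e_5,\,e_7,\,e_8\rangle$; one checks directly from the bracket table that $e_1-2e_6$ centralises ${\bf h}_5$, that $\langle e_4,e_5,e_7,e_8\rangle$ is ${\bf h}_5$-stable, and that ${\bf m}_5$ generates ${\bf g}$ (e.g.\ $[e_7,e_4]=e_2$, $[e_4,e_8]=e_3$, $[e_8,e_7]=2e_6$, and then $e_6$ together with $e_1-2e_6$ gives $e_1$). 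So $({\bf h}_5,{\bf m}_5)$ is a reductive pair with ${\bf m}_5$ generating ${\bf g}$, yet it is absent from the list. You treat ${\bf h}_5$ on the same footing as ${\bf h}_6,{\bf h}_7$ throughout and even display the relevant brackets, but never reconcile this with the fact that the statement you are proving omits it. Your argument therefore establishes a (presumably corrected) proposition with three cases; if the goal is the proposition as printed, you must either find an error in your ${\bf h}_5$ computation or flag the discrepancy explicitly.
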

\begin{proof} An arbitrary complement ${\bf m}_i$ to the subalgebra ${\bf h}_i$, 
$i=5,\cdots ,11,$ in 
${\bf g}$ has as generators 
in the case $i=5$ 

\medskip
\noindent
\centerline{$e_4+a_1 e_1 +b_1 e_2+c_1 e_3$, \ \ $e_5+a_2 e_1 +b_2 e_2+c_2 e_3$, \ \ $e_6+a_3 e_1 +b_3 e_2+c_3 e_3$,}    

\medskip
\noindent
\centerline{$e_7+a_4 e_1 +b_4 e_2+c_4 e_3$, \ \ $e_8+a_5 e_1 +b_5 e_2+c_5 e_3$, }

\medskip
\noindent
in the case $i=6$

\medskip
\noindent
\centerline{$e_1+a_1 e_2 +b_1 e_4+c_1 e_7$, \ \ $e_3+a_2 e_2 +b_2 e_4+c_2 e_7$, \ \ $e_5+a_3 e_2 +b_3 e_4+c_3 e_7$,}    

\medskip
\noindent
\centerline{$e_6+a_4 e_2 +b_4 e_4+c_4 e_7$, \ \ $e_8+a_5 e_2 +b_5 e_4+c_5 e_7$, }

\medskip
\noindent
in the case $i=7$

\medskip
\noindent
\centerline{$e_1+a_1 e_6 +b_1 e_7+c_1 e_8$, \ \ $e_2+a_2 e_6 +b_2 e_7+c_2 e_8$, \ \ $e_3+a_3 e_6 +b_3 e_7+c_3 e_8$,}    

\medskip
\noindent
\centerline{$e_4+a_4 e_6 +b_4 e_7+c_4 e_8$, \ \ $e_5+a_5 e_6 +b_5 e_7+c_5 e_8$, }

\medskip
\noindent
in the case $i=8$

\medskip
\noindent
\centerline{$e_1+a_1 (e_2+e_5) +b_1 (e_6+e_7)+c_1 e_8$, \ \ $e_2+a_2 (e_2+e_5) +b_2 (e_6+e_7)+c_2 e_8$, }    

\medskip
\noindent
\centerline{$e_3+a_3 (e_2+e_5) +b_3 (e_6+e_7)+c_3 e_8$, \ \ $e_4+a_4 (e_2+e_5) +b_4 (e_6+e_7)+c_4 e_8$, } 

\medskip
\noindent
\centerline{$e_6+a_5 (e_2+e_5) +b_5 (e_6+e_7)+c_5 e_8$, }

\medskip
\noindent
in the case $i=9$

\medskip
\noindent
\centerline{$e_1+a_1 (e_2+e_5) +b_1 (e_6+e_7)+c_1 (e_4-e_3+b e_8)$, }

\medskip
\noindent
\centerline{$e_2+a_2 (e_2+e_5) +b_2 (e_6+e_7)+c_2 (e_4-e_3+b e_8)$, }

\medskip
\noindent
\centerline{$e_3+a_3 (e_2+e_5) +b_3 (e_6+e_7)+c_3 (e_4-e_3+b e_8)$, } 

\medskip
\noindent
\centerline{$e_6+a_4 (e_2+e_5) +b_4 (e_6+e_7)+c_4 (e_4-e_3+b e_8)$, }

\medskip
\noindent
\centerline{$e_8+a_5 (e_2+e_5) +b_5 (e_6+e_7)+c_5 (e_4-e_3+b e_8)$, }

\medskip
\noindent
in the case $i=10$ 

\medskip
\noindent
\centerline{$e_1+a_1 (e_4-e_3+b(e_2+e_5)) +b_1 (e_6+e_7)+c_1 (e_8 +c(e_2+e_5))$, }

\medskip
\noindent
\centerline{$e_2+a_2 (e_4-e_3+b(e_2+e_5)) +b_2 (e_6+e_7)+c_2 (e_8 +c(e_2+e_5))$, }

\medskip
\noindent
\centerline{$e_3+a_3 (e_4-e_3+b(e_2+e_5)) +b_3 (e_6+e_7)+c_3 (e_8 +c(e_2+e_5))$, }

\medskip
\noindent
\centerline{$e_5+a_4 (e_4-e_3+b(e_2+e_5)) +b_4 (e_6+e_7)+c_4 (e_8 +c(e_2+e_5))$, }

\medskip
\noindent
\centerline{$e_6+a_5 (e_4-e_3+b(e_2+e_5)) +b_5 (e_6+e_7)+c_5 (e_8 +c(e_2+e_5))$,}

\medskip
\noindent
and in the case $i=11$ 

\medskip
\noindent
\centerline{$e_2+a_1(e_1-\frac{1}{2} e_6+\frac{3}{2} c(e_4-e_3)-\frac{3}{2} b(e_5+e_2))+ $}
\newline
\centerline{$b_1(e_8+b (e_4-e_3)+c(e_5+e_2))+c_1(e_6+e_7)$, }

\medskip
\noindent
\centerline{$e_3+a_2(e_1-\frac{1}{2} e_6+\frac{3}{2} c(e_4-e_3)-\frac{3}{2} b(e_5+e_2))+ $}
\newline
\centerline{$b_2(e_8+b (e_4-e_3)+c(e_5+e_2))+c_2(e_6+e_7)$, }

\medskip
\noindent
\centerline{$e_4+a_3(e_1-\frac{1}{2} e_6+\frac{3}{2} c(e_4-e_3)-\frac{3}{2} b(e_5+e_2))+ $}
\newline
\centerline{$b_3(e_8+b (e_4-e_3)+c(e_5+e_2))+c_3(e_6+e_7)$, }

\medskip
\noindent
\centerline{$e_5+a_4(e_1-\frac{1}{2} e_6+\frac{3}{2} c(e_4-e_3)-\frac{3}{2} b(e_5+e_2))+ $}
\newline
\centerline{$b_4(e_8+b (e_4-e_3)+c(e_5+e_2))+c_4(e_6+e_7)$, }

\medskip
\noindent
\centerline{$e_7+a_5(e_1-\frac{1}{2} e_6+\frac{3}{2} c(e_4-e_3)-\frac{3}{2} b(e_5+e_2))+ $}
\newline
\centerline{$b_5(e_8+b (e_4-e_3)+c(e_5+e_2))+c_5(e_6+e_7)$, }

\medskip
\noindent
where $a_j, b_j, c_j \in \mathbb R$, $j=1, \cdots , 5$.  
The relation $[{\bf h_i},{\bf m_i}] \subseteq {\bf m_i}$, $i=5, \cdots ,11,$ 
yields the assertion. 
\end{proof}

\begin{Prop} \label{su32dim} The Lie algebra ${\bf g}=\mathfrak{su_3}(\mathbb C,1)$ is 
reductive with respect to 
the following pairs $({\bf h}, {\bf m})$, where ${\bf h}$ is 
a $2$-dimensional subalgebra of ${\bf g}$ and  ${\bf m}$ is a  complementary  
subspace to ${\bf h}$ generating  ${\bf g}$, if and only if one of the following holds: 

\medskip
\noindent 
1) ${\bf h}_{12}=\langle e_1, e_6 \rangle$  and 
${\bf m}_{12}=\langle e_2, e_3, e_4, e_5, e_7, e_8 \rangle$, 

\medskip
\noindent 
2) ${\bf h}_{20}=\langle e_1-\frac{1}{2} e_6+\frac{3}{2} a(e_4-e_3)-\frac{3}{2} b(e_5+e_2)-\frac{3(a^2+b^2)}{2}(e_6+e_7)$, 
\newline
\centerline{$e_8+b(e_4-e_3)+a(e_5+e_2)+c(e_6+e_7) \rangle $  
 } and 
\newline
${\bf m}_{20}=\langle e_6+e_7, e_2+e_5, e_4-e_3, e_4- b e_8+2 a e_1- a e_6, 
e_2+ a e_8+2 b e_1- b e_6,$ 
\newline
$e_6+ c e_8+ b e_5- a e_4 \rangle $, $a,b,c \in \mathbb R$. 
\end{Prop}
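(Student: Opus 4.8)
The proof proceeds along the same lines as Propositions \ref{sl3R1dim} and \ref{su33dim}. The nine conjugacy classes ${\bf h}_{12},\dots,{\bf h}_{20}$ of two-dimensional subalgebras of ${\bf g}=\mathfrak{su_3}(\mathbb C,1)$ have been listed above; for each of them the plan is to fix a set of six basis vectors $v_1,\dots,v_6$ of ${\bf g}$ spanning a complement to ${\bf h}_i$ and to write the general complement in the form
\[
{\bf m}_i=\langle\, v_1+w_1,\ v_2+w_2,\ \dots,\ v_6+w_6\,\rangle,
\]
where each $w_j$ runs over ${\bf h}_i$ with two real coefficients, so that twelve parameters appear for each $i$. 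Using the multiplication table of ${\bf g}$ displayed before Proposition \ref{su34dim}, one computes the brackets $[x,v_j+w_j]$ for the two generators $x$ of ${\bf h}_i$ and decomposes them along ${\bf g}={\bf m}_i\oplus{\bf h}_i$; the reductivity condition $[{\bf h}_i,{\bf m}_i]\subseteq{\bf m}_i$ is then equivalent to the vanishing of the ${\bf h}_i$-components, i.e.\ to a system of low-degree polynomial equations in the parameters. Finally, for every complement surviving this step one must check whether it generates ${\bf g}$; the fastest way is to produce explicitly, as iterated brackets of elements of ${\bf m}_i$, the missing basis vectors (for instance $e_1$ and $e_6$), so that ${\bf m}_i$ cannot be contained in a proper subalgebra.

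Carrying this out, I expect that for ${\bf h}_{13},\dots,{\bf h}_{19}$, whose defining bases involve the nilpotent combinations $e_4-e_3$, $e_5+e_2$ and the parabolic direction $e_6+e_7$, the invariance equations force ${\bf m}_i$ into the parabolic subalgebra spanned by these together with $e_1,e_6,e_8$, so that ${\bf m}_i$ fails to generate ${\bf g}$ and these cases are discarded. For ${\bf h}_{12}=\langle e_1,e_6\rangle$, which is of compact Cartan type, the invariance condition already determines the complement uniquely as ${\bf m}_{12}=\langle e_2,e_3,e_4,e_5,e_7,e_8\rangle$, and a few brackets such as $[e_2,e_4]=e_7$ and $[e_4,e_5]=2(e_1-e_6)$ show it generates ${\bf g}$, giving case~1). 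For ${\bf h}_{20}$ the system, solved together with the relations built into its defining parameters, admits precisely the stated complement ${\bf m}_{20}$ with the three free parameters $a,b,c$, and again generation is verified by recovering $e_1$ and $e_6$ from brackets of elements of ${\bf m}_{20}$, which yields case~2).

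The main obstacle is the bookkeeping for ${\bf h}_{18}$ and ${\bf h}_{20}$: both carry four real parameters linked by $bc-ad=\frac{1}{2}$, so the linear systems governing the $w_j$ have coefficients that are quadratic in $a,b,c,d$, and one has to separate genuine obstructions from spurious ones that vanish on the constraint hypersurface $bc-ad=\frac{1}{2}$. A secondary, purely mechanical point is that for each of the many discarded subalgebras one must still certify that the computed complement lies in a proper subalgebra; once the multiplication table is at hand this reduces to identifying a single invariant subspace containing ${\bf m}_i$, which is routine.
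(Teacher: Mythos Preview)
Your overall strategy---parametrize a general complement to each ${\bf h}_i$, impose $[{\bf h}_i,{\bf m}_i]\subseteq{\bf m}_i$, and solve---is exactly what the paper does. The gap is in the mechanism you anticipate for discarding ${\bf h}_{13},\dots,{\bf h}_{19}$. You expect the reductive complement to exist but to lie in a proper subalgebra and hence fail to generate ${\bf g}$. This cannot happen: from the list of subalgebras of $\mathfrak{su_3}(\mathbb C,1)$ given before Proposition~\ref{su34dim}, every proper subalgebra has dimension at most~$5$, so any $6$-dimensional subspace automatically generates ${\bf g}$. The actual reason these cases fail is that the linear system coming from $[{\bf h}_i,{\bf m}_i]\subseteq{\bf m}_i$ is \emph{inconsistent}. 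For instance, with ${\bf h}_{13}=\langle e_4-e_3,\,e_6+e_7\rangle$ and $v_2=e_2+a_2(e_4-e_3)+b_2(e_6+e_7)$, $v_4=e_5+a_4(e_4-e_3)+b_4(e_6+e_7)$, one finds that the $(e_6+e_7)$-components of $[e_4-e_3,v_2]$ and $[e_4-e_3,v_4]$ force $b_5=2b_1-1$ and $b_5=2b_1+1$ simultaneously. So in carrying out the computation you will not be looking for a proper invariant subalgebra containing ${\bf m}_i$; you will be looking for a pair of incompatible equations in the parameters $a_j,b_j$.

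A smaller point: you say that ${\bf h}_{18}$ and ${\bf h}_{20}$ ``both carry four real parameters linked by $bc-ad=\tfrac{1}{2}$''. This constraint applies only to ${\bf h}_{18}$; the subalgebra ${\bf h}_{20}$ depends on three free parameters $a,b,c$ with no relation among them, which is why the surviving complement ${\bf m}_{20}$ in case~2) also carries three free parameters.
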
 
\begin{proof} An arbitrary complement ${\bf m}_i$ to the subalgebra ${\bf h}_i$, 
$i=12,\cdots ,20,$ in 
${\bf g}$ has as generators in the case $i=12$ 

\medskip
\noindent
\centerline{$e_2+a_1 e_1+b_1 e_6$, \ \ $e_3+a_2 e_1+b_2 e_6$, \ \ $e_4+a_3 e_1+b_3 e_6$, }

\medskip
\noindent
\centerline{$e_5+a_4 e_1+b_4 e_6$, \ \ $e_7+a_5 e_1+b_5 e_6$, \ \ $e_8+a_6 e_1+b_6 e_6$, }

\medskip
\noindent
in the case $i=13$ 

\medskip
\noindent
\centerline{$e_1+a_1 (e_4-e_3)+b_1 (e_6+e_7)$, \ \ $e_2+a_2 (e_4-e_3)+b_2 (e_6+e_7)$,}

\medskip
\noindent
\centerline{$e_3+a_3 (e_4-e_3)+b_3 (e_6+e_7)$, \ \ $e_5+a_4 (e_4-e_3)+b_4 (e_6+e_7)$,}

\medskip
\noindent
\centerline{$e_6+a_5 (e_4-e_3)+b_5 (e_6+e_7)$, \ \ $e_8+a_6 (e_4-e_3)+b_6 (e_6+e_7)$,}

\medskip
\noindent
in the case $i=14$ 

\medskip
\noindent
\centerline{$e_1+a_1 (e_2+e_5+b(e_4-e_3))+b_1 (e_6+e_7)$, } 

\medskip
\noindent
\centerline{$e_2+a_2 (e_2+e_5+b(e_4-e_3))+b_2 (e_6+e_7)$,}

\medskip
\noindent
\centerline{$e_3+a_3 (e_2+e_5+b(e_4-e_3))+b_3 (e_6+e_7)$, }

\medskip
\noindent
\centerline{$e_4+a_4 (e_2+e_5+b(e_4-e_3))+b_4 (e_6+e_7)$,}

\medskip
\noindent
\centerline{$e_6+a_5 (e_2+e_5+b(e_4-e_3))+b_5 (e_6+e_7)$, }

\medskip
\noindent
\centerline{$e_8+a_6 (e_2+e_5+b(e_4-e_3))+b_6 (e_6+e_7)$, }

\medskip
\noindent
in the case $i=15$ 

\medskip
\noindent
\centerline{$e_1+a_1 (e_4-e_3)+b_1 (e_8+b(e_6+e_7))$, }

\medskip
\noindent
\centerline{$e_2+a_2 (e_4-e_3)+b_2 (e_8+b(e_6+e_7))$, }

\medskip
\noindent
\centerline{$e_3+a_3 (e_4-e_3)+b_3 (e_8+b(e_6+e_7))$, }

\medskip
\noindent
\centerline{$e_5+a_4 (e_4-e_3)+b_4 (e_8+b(e_6+e_7))$,}

\medskip
\noindent
\centerline{$e_6+a_5 (e_4-e_3)+b_5 (e_8+b(e_6+e_7))$, }

\medskip
\noindent
\centerline{$e_7+a_6 (e_4-e_3)+b_6 (e_8+b(e_6+e_7))$,}

\medskip
\noindent
in the case $i=16$ 

\medskip
\noindent
\centerline{$e_1+a_1 (e_5+e_2+b(e_4-e_3))+b_1 (e_8+c(e_6+e_7))$, }

\medskip
\noindent
\centerline{$e_2+a_2 (e_5+e_2+b(e_4-e_3))+b_2 (e_8+c(e_6+e_7))$,}

\medskip
\noindent
\centerline{$e_3+a_3 (e_5+e_2+b(e_4-e_3))+b_3 (e_8+c(e_6+e_7))$, }

\medskip
\noindent
\centerline{$e_4+a_4 (e_5+e_2+b(e_4-e_3))+b_4 (e_8+c(e_6+e_7))$,}

\medskip
\noindent
\centerline{$e_6+a_5 (e_5+e_2+b(e_4-e_3))+b_5 (e_8+b(e_6+e_7))$, }

\medskip
\noindent
\centerline{$e_7+a_6 (e_5+e_2+b(e_4-e_3))+b_6 (e_8+b(e_6+e_7))$,}

\medskip
\noindent
in the case $i=17$  

\medskip
\noindent
\centerline{$e_1+a_1 (e_6+e_7)+b_1 (e_8+b(e_4-e_3)+c(e_5+e_2))$, }

\medskip
\noindent
\centerline{$e_2+a_2 (e_6+e_7)+b_2 (e_8+b(e_4-e_3)+c(e_5+e_2))$, }

\medskip
\noindent
\centerline{$e_3+a_3 (e_6+e_7)+b_3 (e_8+b(e_4-e_3)+c(e_5+e_2))$, }

\medskip
\noindent
\centerline{$e_4+a_4 (e_6+e_7)+b_4 (e_8+b(e_4-e_3)+c(e_5+e_2))$,}

\medskip
\noindent
\centerline{$e_5+a_5 (e_6+e_7)+b_5 (e_8+b(e_4-e_3)+c(e_5+e_2))$, }

\medskip
\noindent
\centerline{$e_6+a_6 (e_6+e_7)+b_6 (e_8+b(e_4-e_3)+c(e_5+e_2))$, }

\medskip
\noindent
in the case $i=18$  

\medskip
\noindent
$e_1+a_1 (e_6+e_7+a(e_4-e_3)+b(e_5+e_2))+b_1 (e_8+c(e_4-e_3)+d(e_5+e_2))$, 

\medskip
\noindent
$e_2+a_2 (e_6+e_7+a(e_4-e_3)+b(e_5+e_2))+b_2 (e_8+c(e_4-e_3)+d(e_5+e_2))$, 

\medskip
\noindent
$e_3+a_3 (e_6+e_7+a(e_4-e_3)+b(e_5+e_2))+b_3 (e_8+c(e_4-e_3)+d(e_5+e_2))$, 

\medskip
\noindent
$e_4+a_4 (e_6+e_7+a(e_4-e_3)+b(e_5+e_2))+b_4 (e_8+c(e_4-e_3)+d(e_5+e_2))$, 

\medskip
\noindent
$e_5+a_5 (e_6+e_7+a(e_4-e_3)+b(e_5+e_2))+b_5 (e_8+c(e_4-e_3)+d(e_5+e_2))$, 

\medskip
\noindent
$e_6+a_6 (e_6+e_7+a(e_4-e_3)+b(e_5+e_2))+b_6 (e_8+c(e_4-e_3)+d(e_5+e_2))$, 

\medskip
\noindent
in the case $i=19$ 

\medskip
\noindent
\centerline{$e_2+a_1 (e_1- \frac{1}{2}e_6+a e_8+b(e_4-e_3)+c(e_5+e_2))+b_1 (e_6+e_7) $, }  

\medskip
\noindent
\centerline{$e_3+a_2 (e_1- \frac{1}{2}e_6+a e_8+b(e_4-e_3)+c(e_5+e_2))+b_2 (e_6+e_7) $, }  

\medskip
\noindent
\centerline{$e_4+a_3 (e_1- \frac{1}{2}e_6+a e_8+b(e_4-e_3)+c(e_5+e_2))+b_3 (e_6+e_7) $, }  

\medskip
\noindent
\centerline{$e_5+a_4 (e_1- \frac{1}{2}e_6+a e_8+b(e_4-e_3)+c(e_5+e_2))+b_4 (e_6+e_7) $, } 

\medskip
\noindent
\centerline{$e_7+a_5 (e_1- \frac{1}{2}e_6+a e_8+b(e_4-e_3)+c(e_5+e_2))+b_5 (e_6+e_7) $, } 

\medskip
\noindent
\centerline{$e_8+a_6 (e_1- \frac{1}{2}e_6+a e_8+b(e_4-e_3)+c(e_5+e_2))+b_6 (e_6+e_7) $, } 

\medskip
\noindent
and in the case $i=20$

\medskip
\noindent
$e_2+a_1 (e_1- \frac{1}{2}e_6+ \frac{3}{2}a(e_4-e_3)-\frac {3}{2}b(e_5+e_2)-\frac{3(a^2+b^2)}{2} (e_6+e_7))+ $  
\newline
$+b_1(e_8+b(e_4-e_3)+a(e_5+e_2)+c(e_6+e_7))$, 

\medskip
\noindent
$e_3+a_2 (e_1- \frac{1}{2}e_6+ \frac{3}{2}a(e_4-e_3)-\frac {3}{2}b(e_5+e_2)-\frac{3(a^2+b^2)}{2} (e_6+e_7))+ $  
\newline
$+b_2(e_8+b(e_4-e_3)+a(e_5+e_2)+c(e_6+e_7))$,

\medskip
\noindent
$e_4+a_3 (e_1- \frac{1}{2}e_6+ \frac{3}{2}a(e_4-e_3)-\frac {3}{2}b(e_5+e_2)-\frac{3(a^2+b^2)}{2} (e_6+e_7))+ $ 
\newline
$+b_3(e_8+b(e_4-e_3)+a(e_5+e_2)+c(e_6+e_7))$, 

\medskip
\noindent
$e_5+a_4 (e_1- \frac{1}{2}e_6+ \frac{3}{2}a(e_4-e_3)-\frac {3}{2}b(e_5+e_2)-\frac{3(a^2+b^2)}{2} (e_6+e_7))+ $ 
\newline
$+b_4(e_8+b(e_4-e_3)+a(e_5+e_2)+c(e_6+e_7))$, 

\medskip
\noindent
$e_6+a_5 (e_1- \frac{1}{2}e_6+ \frac{3}{2}a(e_4-e_3)-\frac {3}{2}b(e_5+e_2)-\frac{3(a^2+b^2)}{2} (e_6+e_7))+ $ 
\newline
$+b_5(e_8+b(e_4-e_3)+a(e_5+e_2)+c(e_6+e_7))$, 

\medskip
\noindent
$e_7+a_6 (e_1- \frac{1}{2}e_6+ \frac{3}{2}a(e_4-e_3)-\frac {3}{2}b(e_5+e_2)-\frac{3(a^2+b^2)}{2} (e_6+e_7))+ $ 
\newline
$+b_6(e_8+b(e_4-e_3)+a(e_5+e_2)+c(e_6+e_7))$, 

\medskip
\noindent
where $a_j, b_j \in \mathbb R$, $j=1, \cdots 6$.  
Using the  relation $[{\bf h_i},{\bf m_i}] \subseteq {\bf m_i}$, 
$i=12, \cdots ,20,$ 
we obtain the assertion. 
\end{proof}

\begin{Prop} \label{su31dim} The Lie algebra ${\bf g}=\mathfrak{su_3}(\mathbb C,1)$ 
is reductive 
with a $1$-dimensional subalgebra ${\bf h}$ and a $7$-dimensional 
complementary subspace ${\bf m}$ generating ${\bf g}$ in precisely one of 
the following cases:

\medskip
\noindent 
1) ${\bf h}=\langle e_1-2 e_6 \rangle $, 
${\bf m}_{b,c,d}=\langle e_2+b(e_1-2 e_6), e_3+c(e_1-2 e_6), 
e_6+d(e_1-2 e_6), $ 
\newline
$e_4, e_5, e_7, e_8 \rangle $, 
when $b,c,d \in \mathbb R$,  

\medskip
\noindent 
2) ${\bf h}=\langle e_1+ e_6 \rangle $ and 
${\bf m}_{b,c,d}=\langle e_2, e_3, e_7, e_8, e_4+d(e_1+ e_6), 
e_5+b(e_1+ e_6), $
\newline
$e_6+c(e_1+ e_6) \rangle $ with  
$b,c,d \in \mathbb R$,  

\medskip
\noindent 
3) ${\bf h}=\langle e_1-\frac{1}{2} e_6 \rangle $, 
${\bf m}_{b,c,d}=\langle e_2, e_3, e_4, e_5, e_6+b(e_1-\frac{1}{2} e_6), 
e_7+c(e_1-\frac{1}{2} e_6),$
\newline
$e_8+d(e_1-\frac{1}{2} e_6) \rangle $ and  $b,c,d \in \mathbb R$,

\medskip
\noindent
4) ${\bf h}_a=\langle e_1+a e_6 \rangle $ and ${\bf m}_{b}=\langle e_2, e_3, e_4, e_5, e_6+b(e_1+a e_6), e_7, e_8 \rangle $, 
\newline
where $a \in \mathbb R \backslash \{ -\frac{1}{2}, -2,1 \}$, $b,c,d \in \mathbb R$,

\medskip
\noindent 
5) ${\bf h}=\langle e_6 \rangle $ and ${\bf m}_a=\langle e_1+a e_6, e_2, e_3, e_4, e_5, e_7, e_8 \rangle $, $a \in \mathbb R$,

\medskip
\noindent 
6) ${\bf h}=\langle e_8 \rangle $ and ${\bf m}_a=\langle e_1+a e_8, e_2, e_3, e_4, e_5, e_6, e_7 \rangle $, $a \in \mathbb R$,

\medskip
\noindent 
7) ${\bf h}=\langle e_6+e_7+c e_8 \rangle $  and 
${\bf m}_b=\langle e_1+b c e_8, e_2, e_3, e_4, e_5, e_6+e_7, e_7-\frac{1}{c} e_8  \rangle $ with  $c \in \mathbb R \backslash \{ 0 \}$, $b \in \mathbb R$,   

\medskip
\noindent 
8) ${\bf h}_{b,c}=\langle e_5+e_2+b(e_6+e_7)+c e_8 \rangle $ and 
${\bf m}_d=\langle e_1-\frac{c^3 d-c d-b}{2 c} e_8, e_2+\frac{1}{c} e_8, $
\newline
$e_3+c d e_8,  e_7-\frac{b+c d}{c} e_8, e_4-e_3, e_2+ e_5, e_6+e_7  
\rangle $, where  $b,c,d \in \mathbb R, c \neq 0$,    

\medskip
\noindent
9) ${\bf h}_{a,b,c}=\langle e_4-e_3+a(e_5+e_2)+b(e_6+e_7)+c e_8 \rangle $  and 
\newline
${\bf m}_d=\langle e_2-d c e_8, e_3-\frac{1+d c^2 a+a^2}{c} e_8,  
e_6-\frac{a^3+a-b c +d c^2+d c^2 a^2}{c^2} e_8, e_5+e_2, e_6+e_7, $
\newline
$e_4-e_3, 
e_1+\frac{b c+c^2 a-a-a^3+c^4 d-c^2 d-c^2 a^2 d}{2 c^2} e_8 \rangle $   
with $a,b,c,d \in \mathbb R, c \neq 0$,  

\medskip
\noindent
10) ${\bf h}_{a,b,c,d}=\langle e_1-\frac{1}{2} e_6+d(e_4-e_3)+a(e_5+e_2)+b(e_6+e_7)+c e_8 \rangle$ and ${\bf m}_f=\langle e_6+e_7, e_4-e_3, e_5+e_2, e_3+f(e_1-\frac{1}{2} e_6+c e_8), e_2-\frac{2c}{3} e_4-\frac{4a}{3} e_1-\frac{2a}{3} e_7+\frac{2d}{3} e_8, 
e_7-\frac{b}{c} e_8+\frac{a}{c} e_4+\frac{d}{c} e_2, e_8-\frac{8 a c-4 f c^2+24 f d^2-9 f+12 d}{2(8 d c-3 a+4 a c^2)} (e_1-\frac{1}{2} e_6+c e_8)  \rangle$,   

\medskip
\noindent
where $a,b,c,d,f \in \mathbb R$, $c \neq 0$, $8 d c-3 a+4 a c^2 \neq 0$. 
\end{Prop}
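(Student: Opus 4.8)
The proof follows the same pattern as those of Propositions \ref{sl3R1dim}--\ref{su32dim}. Since $\dim\mathfrak{su_3}(\mathbb C,1)=8$, a one-dimensional subalgebra ${\bf h}=\langle Z\rangle$ has a $7$-dimensional complement, and up to conjugacy $Z$ is the generator of one of the subalgebras ${\bf h}_{21},\dots,{\bf h}_{27}$ listed above. For each such $Z$ the generic complement can be written in the form
\[
{\bf m}=\bigl\langle\,e_{j_1}+\alpha_1 Z,\ \dots,\ e_{j_7}+\alpha_7 Z\,\bigr\rangle,\qquad \alpha_1,\dots,\alpha_7\in\mathbb R,
\]
where $e_{j_1},\dots,e_{j_7}$ are seven of the basis vectors $e_1,\dots,e_8$ chosen so that together with $Z$ they span ${\bf g}$. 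The whole problem then reduces to determining for which values of the $\alpha_k$, and of the structure parameters $a,b,c,d$ occurring inside ${\bf h}_{21}$ and ${\bf h}_{24},\dots,{\bf h}_{27}$, the reductivity condition $[Z,{\bf m}]\subseteq{\bf m}$ holds; the requirement that ${\bf m}$ generate ${\bf g}$ is then easily checked to be satisfied in each surviving case, since a proper subalgebra of codimension one in a simple Lie algebra would have to coincide with ${\bf m}$ itself, which the multiplication table excludes.

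The computation itself is routine: using the multiplication table of $\mathfrak{su_3}(\mathbb C,1)$ given above one evaluates $[Z,e_{j_k}+\alpha_k Z]=[Z,e_{j_k}]$, expands it in the basis $e_1,\dots,e_8$, reduces modulo the spanning set of ${\bf m}$, and reads off the condition that the remainder be a scalar multiple of $Z$. This produces a finite system of polynomial equations in the $\alpha_k$ and the structure parameters, whose solution set gives the families $1)$--$10)$ of the statement. For ${\bf h}_{22}=\langle e_6\rangle$ and ${\bf h}_{23}=\langle e_8\rangle$ the map $\hbox{ad}\,Z$ acts by easily identified block rotations, so this step is immediate and yields cases $5)$ and $6)$. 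For ${\bf h}_{21}=\langle e_1+a e_6\rangle$ the eigenvalue configuration of $\hbox{ad}(e_1+a e_6)$ depends on $a$, and at the special values $a=-2,\ 1,\ -\tfrac12$ resonances occur which give rise to additional invariant complements; these are cases $1),\ 2),\ 3)$, whereas generic $a$ yields case $4)$. This branching is the exact analogue of the one encountered for ${\bf h}_{31}$ in Proposition \ref{sl3R1dim}.

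The subalgebras ${\bf h}_{24},{\bf h}_{25},{\bf h}_{26}$, whose generators lie in the nilpotent-plus-torus part of ${\bf g}$, are handled in the same way and give cases $7),\ 8),\ 9)$; here several of the $\alpha_k$ get pinned to rational functions of $b,c,d$, which accounts for the denominators such as $1/c$ appearing in those ${\bf m}$. The genuine obstacle is case $10)$,
\[
{\bf h}_{27}=\langle e_1-\tfrac12 e_6+d(e_4-e_3)+a(e_5+e_2)+b(e_6+e_7)+c e_8\rangle .
\]
Here $Z$ mixes the compact directions $e_1,e_6$ with all of the non-compact ones, so $\hbox{ad}\,Z$ is neither semisimple nor nilpotent, and the linear system determining the $\alpha_k$ has coefficients that are themselves polynomials in $a,b,c,d$. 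One must pin down exactly when these coefficients are invertible — this is the non-degeneracy condition $8dc-3a+4ac^2\neq0$ together with $c\neq0$ appearing in the proposition — and verify that outside that locus no $7$-dimensional invariant complement survives. Keeping track of these polynomial conditions, rather than any conceptual difficulty, is where the real work of the proof lies.
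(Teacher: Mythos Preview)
Your proposal is correct and follows essentially the same approach as the paper's own proof: parametrize an arbitrary complement to each ${\bf h}_i$, $i=21,\dots,27$, by seven real parameters, impose the condition $[{\bf h}_i,{\bf m}_i]\subseteq{\bf m}_i$, and solve the resulting linear system. The paper's version is far terser---it merely lists the generic complements and states that the reductivity relation yields the assertion---while you have added helpful commentary on the eigenvalue structure of $\hbox{ad}\,Z$, the resonances at $a\in\{-2,1,-\tfrac12\}$ for ${\bf h}_{21}$, and the origin of the non-degeneracy conditions in case $10)$; but the underlying computation is identical.
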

\begin{proof} An arbitrary complement ${\bf m}_i$ to the subalgebra ${\bf h}_i$, 
$i=21, \cdots ,27,$ in ${\bf g}$ has as generators in the case $i=21$ 

\medskip
\noindent
\centerline{$e_2+a_1(e_1+a e_6)$,\ \ $e_3+a_2(e_1+a e_6)$,\ \ 
$e_4+a_3 (e_1+a e_6)$, $e_5+a_4 (e_1+a e_6)$, } 

\medskip
\noindent
\centerline{$e_6+a_5 (e_1+a e_6)$,\ \ $e_7+a_6 (e_1+a e_6)$, \ \ 
$e_8+a_7 (e_1+a e_6)$, } 

\medskip
\noindent
in the case $i=22$

\medskip
\noindent
\centerline{$e_1+a_1 e_6$,\ \ $e_2+a_2 e_6$,\ \ $e_3+a_3 e_6$, $e_4+a_4 e_6$, } 

\medskip
\noindent
\centerline{$e_5+a_5 e_6$,\ \ $e_7+a_6 e_6$, \ \ $e_8+a_7 e_6$, } 

\medskip
\noindent
in the case $i=23$

\medskip
\noindent
\centerline{$e_1+a_1 e_8$,\ \ $e_2+a_2 e_8$,\ \ $e_3+a_3 e_8$, $e_4+a_4 e_8$, } 

\medskip
\noindent
\centerline{$e_5+a_5 e_8$,\ \ $e_6+a_6 e_8$, \ \ $e_7+a_7 e_8$, } 

\medskip
\noindent
in the case $i=24$

\medskip
\noindent
\centerline{$e_1+a_1(e_6+ e_7+c e_8)$,\ \ 
$e_2+a_2(e_6+e_7+c e_8)$, }

\medskip
\noindent
\centerline{$e_3+a_3(e_6+e_7+c e_8)$, $e_4+a_4(e_6+e_7+c e_8)$, } 

\medskip
\noindent
\centerline{$e_5+a_5(e_6+e_7+c e_8)$,\ \ $e_7+a_6(e_6+e_7+c e_8)$,}

\medskip
\noindent
\centerline{$e_8+a_7(e_6+e_7+c e_8)$, } 

\medskip
\noindent
in the case $i=25$ 

\medskip
\noindent
\centerline{$e_1+a_1(e_5+e_2+b(e_6+ e_7)+c e_8)$, \ $e_2+a_2(e_5+e_2+b(e_6+e_7)+c e_8)$,}

\medskip
\noindent
\centerline{$e_3+a_3(e_5+e_2+b(e_6+e_7)+c e_8)$, \ $e_4+a_4(e_5+e_2+b(e_6+e_7)+c e_8)$, } 

\medskip
\noindent
\centerline{$e_6+a_5(e_5+e_2+b(e_6+e_7)+c e_8)$, \ $e_7+a_6(e_5+e_2+b(e_6+e_7)+c e_8)$, }

\medskip
\noindent
\centerline{$e_8+a_7(e_5+e_2+b(e_6+e_7)+c e_8)$, } 

\medskip
\noindent
in the case $i=26$ 

\medskip
\noindent
\centerline{$e_1+a_1(e_4-e_3+a(e_5+e_2)+b(e_6+ e_7)+c e_8)$,}

\medskip
\noindent
\centerline{$e_2+a_2(e_4-e_3+a(e_5+e_2)+b(e_6+e_7)+c e_8)$,}

\medskip
\noindent
\centerline{$e_3+a_3(e_4-e_3+a(e_5+e_2)+b(e_6+e_7)+c e_8)$,}

\medskip
\noindent
\centerline{$e_5+a_4(e_4-e_3+a(e_5+e_2)+b(e_6+e_7)+c e_8)$, } 

\medskip
\noindent
\centerline{$e_6+a_5(e_4-e_3+a(e_5+e_2)+b(e_6+e_7)+c e_8)$, }

\medskip
\noindent
\centerline{$e_7+a_6(e_4-e_3+a(e_5+e_2)+b(e_6+e_7)+c e_8)$, }

\medskip
\noindent
\centerline{$e_8+a_6(e_4-e_3+a(e_5+e_2)+b(e_6+e_7)+c e_8)$, }

\medskip
\noindent
in the case $i=27$ 

\medskip
\noindent
\centerline{$e_2+a_1(e_1- \frac{1}{2} e_6+ d(e_4-e_3)+a(e_5+e_2)+b(e_6+e_7)+
c e_8)$,}

\medskip
\noindent
\centerline{$e_3+a_2(e_1- \frac{1}{2} e_6+d(e_4-e_3)+a(e_5+e_2)+b(e_6+e_7)+
c e_8)$,}

\medskip
\noindent
\centerline{$e_4+a_3(e_1- \frac{1}{2} e_6+d(e_4-e_3)+a(e_5+e_2)+b(e_6+e_7)+
c e_8)$,}  

\medskip
\noindent
\centerline{$e_5+a_4(e_1- \frac{1}{2} e_6+ d(e_4-e_3)+a(e_5+e_2)+b(e_6+e_7)+
c e_8)$, }          

\medskip
\noindent
\centerline{$e_6+a_5(e_1- \frac{1}{2} e_6+ d(e_4-e_3)+a(e_5+e_2)+b(e_6+e_7)+
c e_8)$, } 

\medskip
\noindent
\centerline{$e_7+a_6(e_1- \frac{1}{2} e_6+d(e_4-e_3)+a(e_5+e_2)+b(e_6+e_7)+
c e_8)$, } 

\medskip
\noindent
\centerline{$e_8+a_7(e_1- \frac{1}{2} e_6+ d(e_4-e_3)+a(e_5+e_2)+b(e_6+e_7)+
c e_8)$, } 

\medskip
\noindent
where $a_j$, $j=1, \cdots ,7,$ are real parameters. 
The relation $[{\bf h_i},{\bf m_i}] \subseteq {\bf m_i}$, $i=21, \cdots ,27,$ 
yields  the assertion. 
\end{proof}

\section{ Left A-loops as sections in simple Lie groups}

\noindent
The connected almost differentiable left A-loops $L$ with $\hbox{dim} \ L \le 2$ are classified in 
\cite{loops}, Section 27 and Theorem 18.14. Furthermore, all $3$-dimensional left A-loops which are 
differentiable sections in a non-solvable Lie group are determined in \cite{figula3}. 
In this section we deal with the  at least $4$-dimensional almost 
differentiable left A-loops having an at most $9$-dimensional simple Lie 
group $G$ as the group topologically generated by their left translations. 
According to Lemma \ref{kompakt} the group $G$ is not compact.   

\begin{Prop} There exists no at least  $4$-dimensional differentiable 
left A-loop 
having a group locally isomorphic to $PSL_2(\mathbb C)$ as the group 
topologically generated by its left translations.
\end{Prop}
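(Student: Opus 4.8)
The plan is to combine the classification in Proposition \ref{sl2c} with the two necessary conditions that a reductive triple must satisfy in order to come from an almost differentiable left A-loop: the fundamental-group identity of Lemma \ref{fundamenta} and the non-conjugacy condition of Lemma \ref{conjugate}.

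First I would record the elementary facts about $G$. A connected Lie group locally isomorphic to $PSL_2(\mathbb C)$ has Lie algebra ${\bf g}=\mathfrak{sl_2}(\mathbb C)$ and is isomorphic either to $SL_2(\mathbb C)$ or to $PSL_2(\mathbb C)$, so a maximal compact subgroup $K$ of $G$ is $SU_2(\mathbb C)\cong S^3$ or $SO_3(\mathbb R)$; in particular $\pi_1(K)$ is finite of order at most $2$. Since $\dim L\ge 4$ and $\dim {\bf g}=6$, the Lie algebra ${\bf h}$ of the stabilizer $H$ satisfies $\dim {\bf h}\le 2$, so by Proposition \ref{sl2c} the reductive triple $({\bf g},{\bf h},{\bf m})$ attached to $L$ is, up to conjugacy, one of the three pairs listed there. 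Moreover $G\to G/H$ is a principal $H$-bundle of which $\sigma$ is a section, hence $G\cong (G/H)\times H$ as manifolds; as $G$ is connected, $H$ is connected and Lemma \ref{fundamenta} is applicable.

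Next I would rule out the two cases with elliptic ${\bf h}$. In case 1) the subalgebra ${\bf h}_3=\langle e_3,\hbox{i}e_3\rangle$ is a Cartan subalgebra of ${\bf g}$, so $H$ is a maximal torus and hence $H\cong\mathbb C^{*}$; in case 3) the subalgebra ${\bf h}_6=\langle e_3\rangle$ generates a compact one-parameter subgroup $H\cong S^1$. In both cases a maximal compact subgroup $K_1$ of $H$ is a circle, so $\pi_1(K_1)\cong\mathbb Z$, and Lemma \ref{fundamenta} yields $\pi_1(K)\cong\pi_1(\sigma(G/H))\times\mathbb Z$, which is impossible because $\pi_1(K)$ is finite.

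Finally I would treat case 2), ${\bf h}_4=\langle e_1\rangle$ and ${\bf m}_a=\langle e_2,e_3,\hbox{i}e_1+ae_1,\hbox{i}e_2,\hbox{i}e_3\rangle$ with $a\in\mathbb R$. Here $H\cong\mathbb R$ is contractible, so Lemma \ref{fundamenta} gives no obstruction and one must invoke Lemma \ref{conjugate} instead. The element $e_2$ belongs to ${\bf m}_a$ for every value of $a$, and $e_2$ is a hyperbolic element of $\mathfrak{sl_2}(\mathbb R)$, hence $Ad$-conjugate to $e_1$ inside $\mathfrak{sl_2}(\mathbb R)\subset{\bf g}$ by the facts about $\mathfrak{sl_2}(\mathbb R)$ collected in Section 2. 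Thus $e_2$ is a non-zero element of ${\bf m}_a\cap Ad_g{\bf h}_4$ for a suitable $g$ lying in the image of $SL_2(\mathbb R)$ in $G$, contradicting Lemma \ref{conjugate}; this exhausts all three cases. I expect the only real subtlety to be this last step: case 2) cannot be excluded by a dimension count or by a fundamental-group argument and genuinely requires the conjugacy obstruction, whereas the identification of $H$ and of its maximal compact subgroup in cases 1) and 3) is routine.
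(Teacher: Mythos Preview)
Your proposal is correct and follows essentially the same route as the paper: reduce to the three reductive pairs of Proposition \ref{sl2c}, eliminate ${\bf h}_3$ and ${\bf h}_6$ via Lemma \ref{fundamenta} because the maximal compact of $H$ contains a circle while $\pi_1(K)$ is finite, and eliminate ${\bf h}_4$ via Lemma \ref{conjugate} using that the hyperbolic elements $e_1\in{\bf h}_4$ and $e_2\in{\bf m}_a$ are conjugate. Your write-up is somewhat more explicit (you justify the connectedness of $H$ and identify $H$ concretely in each case), but the argument is the same.
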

\begin{proof}
Since the tangent space $T_e L$ for an almost differentiable left A-loop $L$ is reductive 
only the pairs $({\bf h}, {\bf m})$ in Proposition \ref{sl2c} can  occur 
as the tangent objects 
$(T_1 H, T_e L)$, where $H$ is the stabilizer of the identity $e$ of $L$. 
A  maximal compact subalgebra of the Lie algebra 
${\bf h}_3$ as well as of ${\bf h}_6$ is isomorphic to $so_2(\mathbb R)$. Hence  
the Lie group corresponding to  ${\bf h}_3$ as well as to 
 ${\bf h}_6$ cannot be the 
stabilizer of $e \in L$ (cf. Lemma \ref{fundamenta}).  
Moreover, the hyperbolic elements $e_1 \in {\bf h}_4$ and 
$e_2 \in {\bf m}_a$ are conjugate (see {\bf 1.1}).  This  
 contradiction to Lemma \ref{conjugate} yields the assertion.  
\end{proof}

\begin{Prop} Let  $G$ be locally isomorphic to 
$SL_3(\mathbb R)$. 
Every  connected almost differentiable 
left A-loop having $G$  as the group topologically 
generated by its left translations 
is isomorphic to the $5$-dimensional Bruck loop $L_0$ of hyperbolic type 
having the group $SO_3(\mathbb R)$ as the stabilizer of $e \in L_0$.  
\end{Prop}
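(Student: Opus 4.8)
The plan is to use that $T_eL$ is a reductive complement to $T_1H$ in $\mathfrak{sl_3}(\mathbb R)$, so that $({\bf h},{\bf m}):=(T_1H,T_eL)$ is, up to conjugacy, one of the pairs produced in Propositions~\ref{sl3R4dim}, \ref{sl3R3dim}, \ref{sl3R2dim} and \ref{sl3R1dim}; the cases $\dim L\le 3$, i.e.\ $\dim H\ge 5$, are settled in \cite{figula3} and \cite{loops}, Section~27, so I may assume $\dim{\bf m}\ge 4$. Also $G$ is connected and locally isomorphic to $SL_3(\mathbb R)$; since $SL_3(\mathbb R)$ has trivial centre and fundamental group $\mathbb{Z}_2$, either $G=SL_3(\mathbb R)$ or $G$ is its connected twofold cover $\widetilde G$. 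I intend to discard every pair $({\bf h},{\bf m})$ except the one with ${\bf h}_6\cong\mathfrak{so_3}(\mathbb R)$ by Lemma~\ref{conjugate}, and then to identify the loop belonging to ${\bf h}_6$. A fact used throughout: two semisimple elements of $\mathfrak{sl_3}(\mathbb R)$ with the same characteristic polynomial are conjugate under $SL_3(\mathbb R)$ — they are conjugate under $GL_3(\mathbb R)$, and the determinant of the conjugator is corrected inside its (abelian, $\det$-surjective) centraliser.

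\textbf{Discarding the other pairs.} For each such pair I would exhibit an element of ${\bf m}$ that is $SL_3(\mathbb R)$-conjugate to an element of ${\bf h}$, contradicting Lemma~\ref{conjugate}. For ${\bf h}_5\cong\mathfrak{gl_2}(\mathbb R)$, ${\bf h}_8\cong\mathfrak{sl_2}(\mathbb R)$, ${\bf h}_{26}=\langle e_5,e_8\rangle$ and ${\bf h}_{32}=\langle e_2+e_8\rangle$ the symmetric matrix $e_1+e_3$ lies in ${\bf m}$ and has eigenvalues $\{1,-1,0\}$, which are also the eigenvalues of $e_5$, resp.\ $e_5-e_8$, $e_5$, $e_2+e_8$, lying in ${\bf h}$; for ${\bf h}_7\cong\mathfrak{sl_2}(\mathbb R)$ dually $e_5\in{\bf m}_7$ has the eigenvalues of $e_1+e_3\in{\bf h}_7$. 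For ${\bf h}_{30}=\langle e_5+e_8,e_6-e_7\rangle$ and for ${\bf h}_{35}$ with $b=0$ the antisymmetric element $e_1-e_3\in{\bf m}$, of eigenvalues $\{0,\mathrm i,-\mathrm i\}$, is conjugate to $e_6-e_7\in{\bf h}$. For the families ${\bf h}_{31,j}=\langle e_5+ae_8\rangle$ I would take an element $\alpha X+\gamma D\in{\bf m}$ with $X$ supported in one coordinate plane ($X=e_1\pm e_3$ or $e_2\pm e_4$) and $D$ spanning the one-dimensional diagonal part of ${\bf m}$; a linear and a quadratic equation in $\alpha,\gamma$ (and a free scalar) make its characteristic polynomial equal to that of a suitable scalar multiple of $e_5+ae_8$, and the exclusions on $a$ recorded in Section~3 are exactly what guarantees that this matched element is semisimple, whence it is already conjugate to that multiple. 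For ${\bf h}_{35}=\langle e_6-e_7+b(e_5+e_8)\rangle$ with $b>0$ the element $\sqrt{1+b^2}\,(e_1-e_3)+2b\,(e_5-e_8)\in{\bf m}$ has the characteristic polynomial of $e_6-e_7+b(e_5+e_8)$, both being semisimple with one real and two conjugate complex eigenvalues, hence conjugate. In every instance Lemma~\ref{conjugate} fails, so no such loop exists. (Alternatively, ${\bf h}_5$, ${\bf h}_{30}$ and ${\bf h}_{35}$ with $b=0$ can be ruled out by Lemma~\ref{fundamenta}: a maximal compact subgroup of $H$ then contains a circle, while one of $G$ is $SO_3(\mathbb R)$ or $Spin_3$ with finite fundamental group.)

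\textbf{The surviving pair and the loop.} There remains only ${\bf h}_6\cong\mathfrak{so_3}(\mathbb R)=\langle e_1-e_3,e_2-e_4,e_7-e_6\rangle$ with ${\bf m}_6=\langle e_5,e_8,e_1+e_3,e_2+e_4,e_7+e_6\rangle$. In the matrix model these are the antisymmetric and the symmetric traceless matrices, i.e.\ $({\bf h}_6,{\bf m}_6)$ is the Cartan decomposition $\mathfrak k\oplus\mathfrak p$ of the involution $X\mapsto-X^{\mathrm T}$; thus $[{\bf m}_6,{\bf m}_6]\subseteq{\bf h}_6$, so $G/H$ is an affine symmetric space, and since a nonzero symmetric matrix has a nonzero real eigenvalue while a nonzero antisymmetric one has only purely imaginary eigenvalues, ${\bf m}_6$ contains no nonzero element conjugate into ${\bf h}_6$, so Lemma~\ref{conjugate} is not contradicted. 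Here $G$ cannot be $\widetilde G$: the connected subgroup of $\widetilde G$ with Lie algebra $\mathfrak{so_3}(\mathbb R)$ is $Spin_3=SU_2(\mathbb C)$, which contains the centre $\mathbb{Z}_2$ of $\widetilde G$, so $H$ would contain a non-trivial normal subgroup of $G$, contrary to hypothesis. Hence $G=SL_3(\mathbb R)$ and $H=SO_3(\mathbb R)$, and by the polar decomposition $gH\mapsto(gg^{\mathrm T})^{1/2}$ is a global differentiable section whose image is the set $\exp{\bf m}_6$ of unimodular positive definite symmetric matrices; this is a system of representatives of $G/H$ and is invariant under conjugation by $SO_3(\mathbb R)$, hence defines a global almost differentiable left A-loop. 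Since $[{\bf m}_6,{\bf m}_6]\subseteq{\bf h}_6$ it is a Bruck loop, and since $SO_3(\mathbb R)$ is a maximal compact subgroup of $SL_3(\mathbb R)$ it is of hyperbolic type; being $5$-dimensional with stabiliser $SO_3(\mathbb R)$ it is the loop $L_0$ of \cite{figula2}.

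\textbf{Main obstacle.} The work is concentrated in the discarding step: for the one-parameter families ${\bf h}_{31,1},\dots,{\bf h}_{35}$ one must actually produce the conjugate element inside ${\bf m}$ and check that it is semisimple with the correct eigenvalue multiplicities — precisely the point where the numerical restrictions of Section~3 are used — so that equality of characteristic polynomials already forces conjugacy; the remaining pairs are dispatched by a routine pass over the finite list.
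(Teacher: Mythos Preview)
Your overall strategy coincides with the paper's: run through the reductive pairs of Propositions~\ref{sl3R4dim}--\ref{sl3R1dim}, eliminate all but $({\bf h}_6,{\bf m}_6)$ via Lemmas~\ref{fundamenta} and~\ref{conjugate}, then identify the survivor as the hyperbolic Bruck loop. The tactical execution differs. The paper dispatches ${\bf h}_5,{\bf h}_7,{\bf h}_8,{\bf h}_{30},{\bf h}_{35}|_{b=0}$ with Lemma~\ref{fundamenta} and handles the remaining cases (${\bf h}_{26},{\bf h}_{32},{\bf h}_{31,1},\ldots,{\bf h}_{31,4},{\bf h}_{35}|_{b>0}$) by writing down, for each, an explicit conjugating matrix $g\in SL_3(\mathbb R)$ carrying a generator of ${\bf h}$ into a specific element of ${\bf m}$. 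You instead invoke the uniform principle ``semisimple with equal characteristic polynomial $\Rightarrow$ $SL_3(\mathbb R)$-conjugate'' and then only have to locate an element of ${\bf m}$ with the right eigenvalues; this lets you treat ${\bf h}_5,{\bf h}_7,{\bf h}_8,{\bf h}_{26},{\bf h}_{30},{\bf h}_{32},{\bf h}_{35}$ by inspection of $e_1\pm e_3$ and $e_5$, which is cleaner than either of the paper's two mechanisms. Your discussion of the surviving case is also more explicit than the paper's, which simply cites \cite{figula2}; in particular your exclusion of the double cover $\widetilde G$ via the centre is a point the paper leaves to that reference.

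The one place your argument is genuinely incomplete is the family ${\bf h}_{31,j}$, which you flag yourself. Your sketch (match trace and determinant of a $2\times 2$ block against a scalar multiple of $e_5+ae_8$) is sound in principle, but note that for $j=2,3,4$ the complements ${\bf m}_{b,c,d}$ carry extra diagonal contamination in the generators $e_1+b(\cdot),\,e_3+c(\cdot)$ etc., so the element $\alpha X+\gamma D$ you propose already has diagonal part before adding $D$; the bookkeeping is a little heavier than your one-line description suggests. The paper sidesteps this by exhibiting the conjugator directly (its matrices for $j=2,3,4$ involve an auxiliary parameter $k$ chosen so that a certain linear combination is nonzero), which is less conceptual but leaves nothing to check. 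Either route closes the case; yours just needs the ${\bf h}_{31,j}$ computations written out in full.
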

\begin{proof} 
Since the tangent space $T_e L$ for an almost  differentiable left A-loop 
$L$ 
is reductive we have to investigate the pairs $({\bf h}, {\bf m})$ 
listed in Propositions 
\ref{sl3R4dim}, \ref{sl3R3dim}, \ref{sl3R2dim} and \ref{sl3R1dim}. 
According to Lemma \ref{fundamenta} the Lie groups belonging to 
 the Lie algebras ${\bf h}_5$, 
 ${\bf h}_7$,  ${\bf h}_8$,  ${\bf h}_{30}$ and  ${\bf h}_{35}$ for $b=0$ 
cannot be  stabilizers of $e \in L$. 
The element $-e_5+e_8 \in {\bf h}_{26}$ 
is conjugate to $\frac{1}{2} e_1+2 e_3 \in {\bf m}_{26}$ 
under  $g=\left (\begin{array}{rrr}
0 & 0 & 1 \\
1 & -\frac{1}{2} & 0 \\
1 & \frac{1}{2} & 0 \end{array} \right)$, the element 
$e_2+e_8 \in {\bf h}_{32}$ is 
conjugate to 
$e_1+2 e_7-e_8+2 e_4 \in {\bf m}_d$ under 
$g=\left ( \begin{array}{rrr}
0 & 0 & 1 \\
0 & -\frac{1}{2} & 0 \\
2 & 2 & 0 \end{array} \right )$ and $e_6-e_7+b(e_5+e_8) \in {\bf h}_{35}$, 
$b>0$, is conjugate to 
$(b^2+1)e_1-e_3+2b(e_5-e_8) \in {\bf m}_c$ under 
$g=\left ( \begin{array}{rrr}
0 & 0 & 1 \\
1 & -b & 0 \\
0 & 1 & 0 \end{array} \right )$. Moreover, the element 
$e_8+\frac{1}{a} e_5 \in {\bf h}_{31,1}$ 
is conjugate to 
$\frac{-a^2+a+1}{a^2} e_1+e_2+e_3+e_4-e_6-e_7 \in {\bf m}_b$ 
 under   
$g=\left ( \begin{array}{ccc}
1 & -\frac{1}{a} & -1 \\
1 & \frac{a+1}{a} & -1 \\
0 & \frac{a}{2+a} & \frac{a}{2+a} \end{array} \right )$. 

\smallskip
\noindent
In the case 2) of Proposition \ref{sl3R1dim} we choose 
$k \in \mathbb R \backslash \{ 0 \}$ in such a way 
that $l:=k^2 c+k+b \neq 0$.  
Then the element $l (e_5-2 e_8) \in {\bf h}_{31,2}$ is conjugate to 
\newline
$e_1+b(e_5-2 e_8)+3 l(e_2-k e_6)+k(e_3+c(e_5-2e_8))+\frac{3k^2c+k+3b}{3 l} (k e_4-e_7) \in $
${\bf m}_{b,c,d}$ 
 under $g=\left ( \begin{array}{ccc}
0 & -\frac{3k^2 c+k+3b}{3k l} & 1 \\
k & 1 & 0 \\
\frac{-k}{3 l} & \frac{1}{3 l} & 1 \end{array} \right )$. 

\smallskip
\noindent 
In the case 3) of Proposition \ref{sl3R1dim} we take $k \in \mathbb R$  such 
that 
$n:=k^2 b-2k+c \neq 0$.  Then the element 
$n (e_5-\frac{1}{2} e_8) \in {\bf h}_{31,3}$ is 
conjugate to 
\newline
$-k e_1+k^2(e_2+b(e_5-\frac{1}{2} e_8))+\frac{3k^2b-2k+3c}{2}(e_3-k e_6)+e_4+c(e_5-\frac{1}{2} e_8)+e_7 \in $
\newline
 ${\bf m}_{b,c,d}$ 
 under $g=\left ( \begin{array}{ccc}
0 & \frac{2}{3 n} & \frac{-3k^2b+2b-3c}{3 n} \\
1 & 1 & -k \\
1 & 0 & k \end{array} \right )$. 

\smallskip
\noindent
In the case 4) of Proposition \ref{sl3R1dim}  we take $k \in \mathbb R$ 
 such  that 
 $m:=k^2 b+k+c \neq 0$.  Then the element 
$m (e_5+e_8) \in {\bf h}_{31,4}$ is conjugate to 
\newline
$(3c+3k^2b+k)(ke_2-e_1)+e_4-ke_3+e_7+c(e_5+e_8)+k^2(e_6+b(e_5+e_8)) \in $
\newline  ${\bf m}_{b,c,d}$ 
under $g=\left ( \begin{array}{ccc}
1 & 1 & -k \\
-\frac{1}{3 m} & 0 & \frac{-3c-k-3k^2b}{3 m} \\
0 & 1 & k \end{array} \right )$.   
These facts contradict Lemma \ref{conjugate}. 
\newline
In the  remaining case  one has 
$[{\bf m}_6, {\bf m}_6]={\bf h}_6$ and the loop $L$ with 
 $T_e L={\bf m}_6$ is a Bruck 
loop. 
The assertion follows now from the proof of the Theorem 13 in 
\cite{figula2}, p. 12. 
\end{proof}

\medskip
\noindent
Since the exponential image 
of the Lie algebra  ${\bf g}=\mathfrak{su_3}(\mathbb C,1)$ is much more 
complicated than the exponential image of 
${\bf g}=\mathfrak{sl_3}(\mathbb R)$ we treat the almost differentiable 
left A-loops having $PSU_3(\mathbb C,1)$ as the group topologically 
generated by the left translations under the assumption that their dimension 
is at most 5.

\begin{Prop} Let  $G$ be locally isomorphic to 
$PSU_3(\mathbb C,1)$. Every at most $5$-dimensional connected 
almost differentiable left A-loop having $G$ as the group topologically 
generated by the left translations is isomorphic to the complex hyperbolic 
plane loop $L_0$ having the group 
$Spin _3 \times SO_2(\mathbb R) / \langle (-1,-1) \rangle $ as the 
stabilizer of $e \in L_0$. 
\end{Prop}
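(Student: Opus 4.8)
The plan is to go through the reductive pairs $({\bf h},{\bf m})$ for ${\bf g}=\mathfrak{su_3}(\mathbb C,1)$ collected in Section~3 and to keep the only one that can arise from a left A-loop. For a connected almost differentiable left A-loop $L$ the tangent space $T_eL={\bf m}$ is a reductive complement of ${\bf h}=T_1H$ generating ${\bf g}$; since $\dim G=8$ the bound $\dim L\le 5$ gives $\dim{\bf h}\ge 3$, and because Section~3 was carried out under $\dim{\bf g}-\dim{\bf h}>3$ we get $\dim{\bf h}\in\{3,4\}$ (the case $\dim L\le 3$ being covered by the classification in \cite{loops} and \cite{figula3}). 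Hence the only possible $(T_1H,T_eL)$ are $({\bf h}_1,{\bf m}_1)$ and $({\bf h}_4,{\bf m}_4)$ of Proposition~\ref{su34dim} and $({\bf h}_6,{\bf m}_6)$ and $({\bf h}_7,{\bf m}_7)$ of Proposition~\ref{su33dim}.

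First I would rule out $({\bf h}_4,{\bf m}_4)$ by the topological obstruction of Lemma~\ref{fundamenta}. As $H$ is a closed subgroup of the linear group $G$, the $\mathfrak{sl_2}(\mathbb R)$-summand of ${\bf h}_4\cong\mathfrak{sl_2}(\mathbb R)\oplus\mathfrak{so_2}(\mathbb R)$ integrates to $SL_2(\mathbb R)$ or $PSL_2(\mathbb R)$, so a maximal compact subgroup $K_1$ of $H$ is a $2$-dimensional torus and $\pi_1(K_1)$ has rank $2$, whereas a maximal compact subgroup $K$ of $G$ is covered by $U_2(\mathbb C)$ and $\pi_1(K)$ has rank $1$. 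Since Lemma~\ref{fundamenta} would force $\pi_1(K)\cong\pi_1(\sigma(G/H))\times\pi_1(K_1)$, this is impossible and $H$ cannot be the stabilizer of $e\in L$.

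Next I would discard $({\bf h}_6,{\bf m}_6)$ and $({\bf h}_7,{\bf m}_7)$ by means of Lemma~\ref{conjugate}; here ${\bf h}\cong\mathfrak{sl_2}(\mathbb R)$ has only a one-dimensional maximal compact subalgebra, so Lemma~\ref{fundamenta} gives nothing. In both cases ${\bf h}$ and ${\bf m}$ contain loxodromic elements of equal Killing norm — for instance $e_4\in{\bf h}_6$, $e_8\in{\bf m}_6$ and $e_7\in{\bf h}_7$, $e_4\in{\bf m}_7$, all with $k=1$ — so the conjugation-invariant $k$ does not separate ${\bf h}$ from ${\bf m}$ and one expects $Ad_g\,{\bf h}$ to meet ${\bf m}$ non-trivially for a suitable $g\in G$. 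Accordingly, in each case I would write down an explicit matrix $g\in SU_3(\mathbb C,1)$ conjugating a chosen element of ${\bf h}$ into ${\bf m}$, which contradicts Lemma~\ref{conjugate}. Producing these conjugating matrices, by solving the linear systems coming from the $3\times3$ matrix realization of $\mathfrak{su_3}(\mathbb C,1)$, is the step I expect to be the main obstacle, the exponential and conjugation behaviour of $\mathfrak{su_3}(\mathbb C,1)$ being far less transparent than that of $\mathfrak{sl_3}(\mathbb R)$.

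There remains the pair ${\bf h}_1=\langle e_1,e_2,e_3,e_6\rangle$, ${\bf m}_1=\langle e_4,e_5,e_7,e_8\rangle$. From the multiplication table one verifies $[{\bf m}_1,{\bf m}_1]\subseteq{\bf h}_1$, so $({\bf h}_1,{\bf m}_1)$ is the symmetric decomposition determined by the maximal compact subalgebra ${\bf h}_1\cong\mathfrak{so_3}(\mathbb R)\oplus\mathfrak{so_2}(\mathbb R)$, that is, the Cartan decomposition of ${\bf g}$; hence $G/H$ is the (Riemannian) complex hyperbolic plane, and the requirement that $H$ contain no non-trivial normal subgroup of $G$ forces $G\cong PSU_3(\mathbb C,1)$. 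Since ${\bf h}_1$ is compact all its non-zero elements are elliptic, while $k(X)=\lambda_4^2+\lambda_5^2+\lambda_7^2+\lambda_8^2\ge 0$ on ${\bf m}_1$, so ${\bf m}_1$ contains no non-zero elliptic element and $\exp{\bf m}_1$ contains no conjugate of a non-identity element of $H$; moreover $H$ normalizes ${\bf m}_1$, so by the global Cartan decomposition $G=(\exp{\bf m}_1)H$ the set $\exp{\bf m}_1$ is an $Ad(H)$-invariant sharply transitive differentiable section and $L$ is the associated Bruck loop. This is precisely the complex hyperbolic plane loop $L_0$ with stabilizer $Spin_3\times SO_2(\mathbb R)/\langle(-1,-1)\rangle$ of \cite{figula2}, p.~9, which shows that every such loop is isomorphic to $L_0$ and, in particular, that no $5$-dimensional loop of the required kind exists.
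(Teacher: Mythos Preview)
Your overall strategy matches the paper's proof exactly: reduce to the four reductive pairs of Propositions~\ref{su34dim} and~\ref{su33dim}, kill $({\bf h}_4,{\bf m}_4)$ with Lemma~\ref{fundamenta}, kill $({\bf h}_6,{\bf m}_6)$ and $({\bf h}_7,{\bf m}_7)$ with Lemma~\ref{conjugate}, and identify $({\bf h}_1,{\bf m}_1)$ with the complex hyperbolic plane loop via \cite{figula2}. The only issue is the step you yourself flag as the main obstacle, namely producing the conjugating elements in $G$ for the two $3$-dimensional cases; the paper handles both by shortcuts you have overlooked.

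For $({\bf h}_6,{\bf m}_6)$ you reach for loxodromic elements $e_4\in{\bf h}_6$ and $e_8\in{\bf m}_6$, but there is a much cheaper pair: $e_2\in{\bf h}_6$ and $e_1\in{\bf m}_6$ both lie in the compact subalgebra $\langle e_1,e_2,e_3\rangle\cong\mathfrak{so_3}(\mathbb R)$, and by {\bf 1.2} any two elements of $\mathfrak{so_3}(\mathbb R)$ of equal norm are conjugate already inside the corresponding $SO_3(\mathbb R)\subset G$. No matrix computation is needed.

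For $({\bf h}_7,{\bf m}_7)$ the paper does use the loxodromic elements $e_7\in{\bf h}_7$ and $e_4\in{\bf m}_7$ that you suggest, but it avoids solving for $g\in SU_3(\mathbb C,1)$ directly. Instead it invokes Proposition~3.2.3(d) of Chen--Greenberg \cite{chen1}: two loxodromic elements of $\mathfrak{su_3}(\mathbb C,1)$ are conjugate under $SU_3(\mathbb C,1)$ if and only if they have the same eigenvalues, i.e.\ if and only if they are conjugate in $SL_3(\mathbb C)$. One then only has to exhibit $g\in SL_3(\mathbb C)$ with $Ad_g(e_7)=e_4$, which is an easy linear-algebra exercise (the paper gives $g=\begin{pmatrix}0&1&0\\1&0&\sqrt{2}\\\sqrt{2}&0&1\end{pmatrix}$). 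With these two observations your proof is complete and coincides with the paper's.
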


\begin{proof} Since the tangent space $T_e L$ for an almost differentiable  
left A-loop $L$ 
is reductive we have to deal  only with  the pairs $({\bf h}, {\bf m})$ 
described in the Propositions \ref{su34dim}, \ref{su33dim}.   
The complex hyperbolic plane loop $L_0$ is realized on the exponential 
image of the subspace ${\bf m}_1$ (cf. \cite{figula2}, p. 8). The Lie 
group corresponding to  ${\bf h}_4$
cannot be the stabilizer of a $4$-dimensional topological  loop $L$ (see 
Lemma 
\ref{fundamenta}). According to ${\bf 1.2}$ the element 
 $e_2 \in {\bf h}_6$  is conjugate to 
$e_1 \in {\bf m}_6$,  
which is a contradiction to Lemma \ref{conjugate}.   
Two loxodromic elements of $\mathfrak{su_3}(\mathbb C,1)$ 
are conjugate in $SU_3(\mathbb C,1)$ if and only if they have the same 
eigenvalues (cf. Prop. 3.2.3 (d) in \cite{chen1}, p. 65) and 
 therefore they are 
conjugate in $SL_3(\mathbb C)$. Since the elements $e_7 \in {\bf h}_7$ and 
$e_4 \in {\bf m}_7$ are loxodromic and $Ad _g(e_7)=e_4$ with 
$g=\left ( \begin{array}{ccc}
0 & 1 & 0 \\
1 & 0 & \sqrt{2} \\
\sqrt{2} & 0 & 1 \end{array} \right ) \in SL_3(\mathbb C)$ we have also a 
contradiction to Lemma \ref{conjugate}. 
\end{proof}

\medskip
\noindent
At the end of this section we show that several reductive spaces 
$({\bf g},{\bf h},{\bf m})$, where ${\bf g}=\mathfrak{su_3}(\mathbb C,1)$ 
and $\hbox{dim}\ {\bf h} \le 2$ can not correspond to an almost 
differentiable left A-loop.

\begin{Prop}
There is no almost differentiable left A-loop 
corresponding to one of  
the following triples:  $({\bf g}, {\bf h}_{12}, {\bf m}_{12})$ in 
Proposition 
\ref{su32dim} and  $({\bf g}, {\bf h}, {\bf m}_a)$ in the case 6) as 
well as  
$({\bf g}, {\bf h}, {\bf m}_b)$ in the case 7) of Proposition \ref{su31dim}. 
\end{Prop}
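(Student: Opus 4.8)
The plan is to confront the three triples with Lemmas \ref{conjugate} and \ref{fundamenta}, exactly as in the proofs of the preceding propositions. For $({\bf g},{\bf h}_{12},{\bf m}_{12})$ I would argue as was done there for ${\bf h}_4$. Since $[e_1,e_6]=0$ and both $e_1,e_6$ are elliptic --- indeed ${\bf h}_{12}$ is a $2$-dimensional abelian, hence Cartan, subalgebra of the maximal compact subalgebra ${\bf h}_1\cong\mathfrak{so_3}(\mathbb R)\oplus\mathfrak{so_2}(\mathbb R)$ --- the connected stabilizer $H$ is contained in a maximal compact subgroup of $G$ and, being closed, connected, abelian of dimension $2$, is a $2$-torus; hence a maximal compact subgroup of $H$ has fundamental group $\mathbb Z^2$. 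But a maximal compact subgroup of $PSU_3(\mathbb C,1)$ (isomorphic to $U_2(\mathbb C)/\langle\omega I\rangle$ with $\omega^{3}=1$) has fundamental group of rank $1$ and cannot contain $\mathbb Z^2$, so Lemma \ref{fundamenta} excludes this triple.

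For the two triples coming from the one-dimensional subalgebras of Proposition \ref{su31dim} the stabilizer is generated by a loxodromic element, so Lemma \ref{fundamenta} is vacuous and I would instead use Lemma \ref{conjugate}. In the matrix model of $\mathfrak{su_3}(\mathbb C,1)$ fixed above one computes $k(e_8)=1>0$ and $k(e_6+e_7+c\,e_8)=c^2>0$, so for $c\neq 0$ these generators of ${\bf h}$ are loxodromic, with eigenvalues $\{0,1,-1\}$ and $\{0,c,-c\}$ respectively (the latter because the lower $2\times 2$ block $\left(\begin{array}{cc}i&1+ci\\1-ci&-i\end{array}\right)$ is trace-free with determinant $-c^2$). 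The elements $e_4\in{\bf m}_a$ and $c\,e_4\in{\bf m}_b$ are loxodromic with exactly these eigenvalues, so by the eigenvalue criterion for loxodromic elements (cf. \cite{chen1}, already used in the preceding proof) $e_8$ is conjugate to $e_4$ and $e_6+e_7+c\,e_8$ to $c\,e_4$ in $SU_3(\mathbb C,1)$, hence in $G=PSU_3(\mathbb C,1)=PU_3(\mathbb C,1)$; for the first pair $g=\left(\begin{array}{ccc}0&1&0\\1&0&0\\0&0&i\end{array}\right)$ gives $Ad_g(e_8)=e_4$. Thus each of ${\bf m}_a$, ${\bf m}_b$ contains a non-zero element of $Ad_g{\bf h}$, contradicting Lemma \ref{conjugate}.

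The step I expect to require the most care is this last one: one must be certain that the conjugations take place inside $SU_3(\mathbb C,1)$ and not merely in $SL_3(\mathbb C)$ --- this is precisely what the eigenvalue criterion provides --- and that the conjugating element may be chosen in $G$, which is automatic since $PSU_3(\mathbb C,1)=PU_3(\mathbb C,1)$, so that conjugation by any element of $U_3(\mathbb C,1)$ descends to $G$. In the torus case the only subtlety is that $H$ is genuinely closed, hence a $2$-torus rather than a dense winding subgroup, and this holds because $H$, being a point stabilizer, is closed.
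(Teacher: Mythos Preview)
Your argument is correct, but differs from the paper's in two places.

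For the one-dimensional cases 6) and 7) both you and the paper invoke Lemma~\ref{conjugate}, but the paper chooses witnesses that lie inside the subalgebra $\langle e_6,e_7,e_8\rangle\cong\mathfrak{sl_2}(\mathbb R)$ rather than appealing to the loxodromic eigenvalue criterion from \cite{chen1}: since $e_8$ and $e_7\in{\bf m}_a$ are both hyperbolic in that $\mathfrak{sl_2}$ (and likewise $e_6+e_7+ce_8$ and $e_7-\tfrac{1}{c}e_8\in{\bf m}_b$), the one-dimensional subalgebras they span are already conjugate under the corresponding $PSL_2(\mathbb R)$-subgroup of $G$ by {\bf 1.1}. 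This is more self-contained than your route via $e_4$ and eigenvalues, though your explicit $g\in U(2,1)$ conjugating $e_8$ to $e_4$ is correct and the descent to $PSU_3(\mathbb C,1)=PU_3(\mathbb C,1)$ is handled properly.

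The genuine divergence is in the treatment of ${\bf h}_{12}$. The paper does \emph{not} use Lemma~\ref{fundamenta} here; it again applies Lemma~\ref{conjugate}, observing that $e_1\in{\bf h}_{12}$ and $e_2\in{\bf m}_{12}$ both lie in the compact subalgebra $\langle e_1,e_2,e_3\rangle\cong\mathfrak{so_3}(\mathbb R)$ and are therefore conjugate under the corresponding $SO_3$-subgroup of $G$ (see {\bf 1.2}). Your torus argument via $\pi_1$ is valid and parallels the paper's earlier exclusion of ${\bf h}_4$ in the preceding proposition, but the conjugacy route is shorter and sidesteps any discussion of the rank of $\pi_1(K)$ or of closedness of $H$. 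On the other hand, your approach has the merit of excluding ${\bf h}_{12}$ for \emph{every} possible complement, whereas the paper's argument uses the specific element $e_2\in{\bf m}_{12}$.
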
 
\begin{proof}  Since the elements $e_1 \in {\bf h}_{12}$ and $e_2 \in {\bf m}_{12}$ 
are elliptic in a subalgebra isomorphic to  $ \mathfrak{so_3}(\mathbb R)$ 
of ${\bf g}$ (see {\bf 1.2}) they are conjugate under   
$Ad\ PSU_3(\mathbb C,1)$.   
Since he element $e_8 \in {\bf h}$ in the 
case 6 as well as  $e_6+e_7+c e_8 \in {\bf h}$,  $c \neq 0$,  in the case 
7 of Proposition \ref{su31dim} is  hyperbolic in a subalgebra 
isomorphic to 
$ \mathfrak{sl_2}(\mathbb R)$ 
of ${\bf g}$ (see {\bf 1.1}), we have that $e_8$ and $e_7 \in {\bf m}_a$ 
respectively $e_6+e_7+c e_8$ and   
 $-\frac{1}{\sqrt{2+2 c^2}}(e_7-\frac{1}{c} e_8) \in {\bf m}_b$ 
are conjugate  under $Ad\ PSU_3(\mathbb C,1)$. 
This contradicts Lemma  \ref{conjugate}.  
\end{proof}

\section{ Reductive loops corresponding to  semi-simple Lie groups 
of  dimension 6}

\noindent
Let $G=G_1 \times G_2$  be the group topologically generated by the left 
translations of a 
connected almost differentiable left A-loop $L$, such that $G_i$, $i=1,2$,
 is a 
$3$-dimensional quasi-simple Lie group. 
In contrast to the non-existence of $3$-dimensional almost differentiable 
left A-loops belonging to $G$  (cf. Propositions 5 and 8 in 
\cite{figula3})  we will show that there are such loops $L$ with 
$G=G_1 \times G_2$ as the group topologically 
generated by the  left translations if $\hbox{dim} \ L \ge 4$.

\medskip
\noindent
The following fact is well known  from linear algebra:

\begin{Lemma} \label{mdimenzioja} 
Let ${\bf g}={\bf g}_1 \oplus {\bf g}_2$, where ${\bf g}_i$, $i=1,2$ are simple Lie algebras of dimension $3$.  For 
any  subspace ${\bf m}$ with dimension $4$ respectively $5$ the intersections ${\bf m} \cap {\bf g}_1$ and  
${\bf m} \cap {\bf g}_2$ have  dimension at least $1$ respectively at least $2$.  
\end{Lemma}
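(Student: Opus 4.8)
The plan is to reduce the statement to the elementary dimension formula for the intersection of two subspaces of a finite-dimensional vector space. Concretely, for any two subspaces $U$ and $V$ of a vector space $W$ of finite dimension $n$, one has $\dim(U\cap V)=\dim U+\dim V-\dim(U+V)\ge \dim U+\dim V-n$, since $U+V$ is itself a subspace of $W$ and hence $\dim(U+V)\le n$.

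First I would apply this with $W={\bf g}$, so $n=\dim{\bf g}=\dim{\bf g}_1+\dim{\bf g}_2=6$, taking $U={\bf m}$ and $V={\bf g}_i$ for $i=1,2$. The inequality immediately gives
\[
\dim({\bf m}\cap{\bf g}_i)\ \ge\ \dim{\bf m}+\dim{\bf g}_i-6\ =\ \dim{\bf m}-3 .
\]
Then I would simply substitute the two cases: if $\dim{\bf m}=4$ the right-hand side is $1$, and if $\dim{\bf m}=5$ it is $2$, which is exactly the asserted lower bound on $\dim({\bf m}\cap{\bf g}_1)$ and $\dim({\bf m}\cap{\bf g}_2)$.

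There is essentially no obstacle here; the lemma is purely linear-algebraic and the only thing being used beyond the intersection formula is that ${\bf g}$ decomposes as a direct sum of the two $3$-dimensional ideals, so that $\dim{\bf g}=6$ and ${\bf g}_1,{\bf g}_2$ are genuine subspaces. (The simplicity of ${\bf g}_1$ and ${\bf g}_2$ plays no role in the statement itself and is only relevant for the later applications.) Hence the proof is a one-line dimension count, and I would present it as such.
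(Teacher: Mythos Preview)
Your argument is correct and is precisely what the paper has in mind: the paper does not give a proof at all but simply remarks that the lemma ``is well known from linear algebra,'' which amounts to the dimension formula $\dim(U\cap V)\ge \dim U+\dim V-\dim W$ you apply. There is nothing to add.
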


\medskip
\noindent
The fact that the coset space $G/H$ is parallelizable is reflected in the 
following lemma.  

\begin{Lemma} \label{egyszerso3} Let $G$ be isomorphic to the Lie group  
$G_1 \times G_2$, such that 
$G_2 \cong SO_3(\mathbb R)$ and for the subgroup $H$ of $G$ one has  
$H=H_1 \times H_2$ with $1 \neq H_2 \le G_2$. 
Then $G$ cannot be the group topologically generated by the left 
translations of a topological loop. 
\end{Lemma}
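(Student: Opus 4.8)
The plan is to assume such a loop $L$ exists and to force a contradiction from Lemma~\ref{fundamenta} by tracking the fundamental groups of maximal compact subgroups. Let $H$ be the stabilizer of the identity of $L$, write $H=H_1\times H_2$ with $H_1\le G_1$ and $1\neq H_2\le G_2\cong SO_3(\mathbb R)$, and recall that $L$ is homeomorphic to the image $\sigma(G/H)$ of the associated section. Up to conjugacy the only connected subgroups of $SO_3(\mathbb R)$ are $\{1\}$, the one-parameter group $SO_2(\mathbb R)$, and $SO_3(\mathbb R)$ itself; so I first dispose of the case $H_2=SO_3(\mathbb R)$, where $H$ contains the non-trivial normal subgroup $\{1\}\times G_2$ of $G=G_1\times G_2$, contradicting the standing hypothesis that $H$ contain no non-trivial normal subgroup of $G$. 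Thus I may assume $H_2\cong SO_2(\mathbb R)$.

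Writing $\mathrm{mc}(X)$ for a maximal compact subgroup of a Lie group $X$, one has $\mathrm{mc}(G)=\mathrm{mc}(G_1)\times SO_3(\mathbb R)$ and $\mathrm{mc}(H)=\mathrm{mc}(H_1)\times SO_2(\mathbb R)$, so Lemma~\ref{fundamenta} gives
\[
\pi_1(\mathrm{mc}(G_1))\times\mathbb Z/2\mathbb Z\;\cong\;\pi_1(\sigma(G/H))\times\pi_1(\mathrm{mc}(H_1))\times\mathbb Z ,
\]
all factors being finitely generated abelian (fundamental groups of compact Lie groups, resp.\ a direct factor of such). Comparing free ranks shows that $\pi_1(\mathrm{mc}(G_1))$ must have positive rank; since $\mathrm{mc}(G_1)$ is a compact connected subgroup of the three-dimensional group $G_1$, its fundamental group lies in $\{0,\mathbb Z/2\mathbb Z,\mathbb Z\}$, hence $\mathrm{mc}(G_1)\cong SO_2(\mathbb R)$, and therefore $G_1$ deformation retracts onto $SO_2(\mathbb R)$ with $\pi_1(G_1)\cong\mathbb Z$. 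The rank equality then also forces $\pi_1(\sigma(G/H))$ and $\pi_1(\mathrm{mc}(H_1))$ to be finite, so comparing torsion subgroups yields $\pi_1(\sigma(G/H))\times\pi_1(\mathrm{mc}(H_1))\cong\mathbb Z/2\mathbb Z$. As $\mathrm{mc}(H_1)$ is a compact connected subgroup of $G_1$, it is either trivial or a circle; a circle is impossible since $\pi_1(S^1)\cong\mathbb Z$ is not a direct factor of $\mathbb Z/2\mathbb Z$, hence $\mathrm{mc}(H_1)=\{1\}$, so $H_1$ is contractible and $\pi_1(\sigma(G/H))\cong\mathbb Z/2\mathbb Z$.

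On the other hand $\sigma(G/H)$ is homeomorphic to $G/H=(G_1/H_1)\times\bigl(SO_3(\mathbb R)/SO_2(\mathbb R)\bigr)=(G_1/H_1)\times S^2$, so $\pi_1(\sigma(G/H))\cong\pi_1(G_1/H_1)$; and since $H_1$ is contractible, the exact homotopy sequence of the bundle $H_1\to G_1\to G_1/H_1$ gives $\pi_1(G_1/H_1)\cong\pi_1(G_1)\cong\mathbb Z$. This contradicts $\pi_1(\sigma(G/H))\cong\mathbb Z/2\mathbb Z$, and the lemma follows.

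I expect the delicate point to be the middle paragraph: one must extract, using only ranks and torsion subgroups of the finitely generated abelian groups in Lemma~\ref{fundamenta}, that the single configuration not excluded outright is $\mathrm{mc}(G_1)\cong SO_2(\mathbb R)$ together with $H_1$ contractible, and only then does the genuinely topological input close the argument — namely that the $SO_2(\mathbb R)$ inside $H$ has already consumed the one free $\mathbb Z$ available in $\pi_1(\mathrm{mc}(G))$, while $S^2$ contributes nothing to $\pi_1$ and $G_1/H_1$ retains the generator of $\pi_1(G_1)\cong\mathbb Z$. A minor preliminary issue is the reduction to a connected $H$, which is needed so that Lemma~\ref{fundamenta} applies, together with the (standard) enumeration of connected subgroups of $SO_3(\mathbb R)$.
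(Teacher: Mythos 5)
The paper does not actually prove this lemma in the text; it refers to Lemma~2 of \cite{figula2}, so there is no in-paper argument to compare yours with. Judged on its own, the core of your argument is sound in the situation the paper actually uses the lemma for: eliminating $H_2=SO_3(\mathbb R)$ by normality, the isomorphism $\pi_1(\mathrm{mc}(G_1))\times\mathbb Z/2\mathbb Z\cong\pi_1(\sigma(G/H))\times\pi_1(\mathrm{mc}(H_1))\times\mathbb Z$ from Lemma~\ref{fundamenta}, the rank-and-torsion bookkeeping that forces $\mathrm{mc}(G_1)\cong SO_2(\mathbb R)$, $\mathrm{mc}(H_1)=\{1\}$ and $\pi_1(\sigma(G/H))\cong\mathbb Z/2\mathbb Z$, and the clash with the direct computation $\pi_1(\sigma(G/H))\cong\pi_1\bigl((G_1/H_1)\times S^2\bigr)\cong\pi_1(G_1)\cong\mathbb Z$ are all correct.

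Two points are genuine gaps relative to the lemma as stated. First, the hypothesis allows an arbitrary nontrivial $H_2\le SO_3(\mathbb R)$, in particular a finite one; then $H$ need not be connected and Lemma~\ref{fundamenta} does not apply. You flag this but do not close it, and it is not cosmetic, since your enumeration of the possible $H_2$ only covers connected subgroups. The fix is cheap and should be stated: a continuous section yields a homeomorphism $G/H\times H\to G$, $(xH,h)\mapsto\sigma(xH)h$, so connectedness of $G$ already forces $H$, hence $H_2$, to be connected; only then is $\{1\}$, $SO_2(\mathbb R)$, $SO_3(\mathbb R)$ an exhaustive list. Second, the statement places no hypothesis on $G_1$, whereas your middle paragraph uses that $G_1$ is three-dimensional and, implicitly, that its compact connected subgroups have fundamental group in $\{0,\mathbb Z/2\mathbb Z,\mathbb Z\}$; this silently excludes, e.g., a $2$-torus inside $G_1$ and thus really uses that $G_1$ is quasi-simple of dimension $3$. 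That is the only case arising in Section~5, but it is not part of what you set out to prove, so either import these hypotheses explicitly or extend the rank comparison to handle $\pi_1(\mathrm{mc}(G_1))$ of rank greater than one.
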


\noindent
For the proof see Lemma 2 in \cite{figula2}, p. 5. 

\medskip
\noindent
First let $G$ be locally isomorphic to 
$SO_3(\mathbb R) \times SO_3(\mathbb R)$.  
Since the at most $2$-dimensional connected subgroups of $G$ are tori and 
$\hbox{dim}\ L \ge 4$ Lemma \ref{fundamenta} gives

\begin{Prop} There is no left A-loop as differentiable section in a 
group locally 
isomorphic to 
$SO_3(\mathbb R) \times SO_3(\mathbb R)$. 
\end{Prop}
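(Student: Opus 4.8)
The plan is to eliminate every admissible stabilizer by the fundamental-group obstruction of Lemma \ref{fundamenta}. First I would fix a connected almost differentiable left A-loop $L$ with $\dim L\ge 4$, write $G$ for the Lie group topologically generated by its left translations and $H$ for the stabilizer of $e\in L$. Since $G$ is locally isomorphic to the compact group $SU_2(\mathbb C)\times SU_2(\mathbb C)$ it is compact, so the closed subgroup $H$ is compact too; moreover $H$ is connected, because by Lemma \ref{conjugate} the multiplication map is a bijection $\sigma(G/H)\times H\to G$ onto the connected group $G$ with $\sigma(G/H)\cong L$ connected. From $\dim G=6$ and $\dim L\ge 4$ we get $\dim H\le 2$, and we may set aside the trivial case $H=1$ (in which $L=G$), so $1\le\dim H\le 2$. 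Now a subalgebra of $\mathfrak{so}_3(\mathbb R)\oplus\mathfrak{so}_3(\mathbb R)$ of dimension $1$ or $2$ projects into an at most $1$-dimensional subalgebra of each summand (since $\mathfrak{so}_3(\mathbb R)$ has subalgebras only in dimensions $0,1,3$), hence lies in the abelian $\mathfrak{so}_2(\mathbb R)\oplus\mathfrak{so}_2(\mathbb R)$; therefore $H$ is a torus $T^k$ with $k\in\{1,2\}$ — this is just the fact, quoted before the statement, that the at most $2$-dimensional connected subgroups of $G$ are tori.

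Then I would apply Lemma \ref{fundamenta}. Since $G$ is compact, $G$ itself is a maximal compact subgroup, and $\pi_1(G)$ is finite: any $G$ locally isomorphic to $SO_3(\mathbb R)\times SO_3(\mathbb R)$ is a quotient of the simply connected $SU_2(\mathbb C)\times SU_2(\mathbb C)$ by a subgroup of its centre $\mathbb Z/2\mathbb Z\times\mathbb Z/2\mathbb Z$, so $|\pi_1(G)|\le 4$. Likewise $H\cong T^k$ is its own maximal compact subgroup, with $\pi_1(H)\cong\mathbb Z^k$. Lemma \ref{fundamenta} then yields $\pi_1(G)\cong\pi_1(\sigma(G/H))\times\mathbb Z^k$, forcing the finite group $\pi_1(G)$ to contain the infinite group $\mathbb Z^k$ — a contradiction. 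Hence there is no such loop $L$.

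I do not anticipate any real obstacle: the argument is short and calculation-free. The two places that deserve explicit mention are the identification of $H$ as a nontrivial torus (where $\dim L\ge 4$ and Lemma \ref{conjugate} are used, together with the structure of $\mathfrak{so}_3(\mathbb R)\oplus\mathfrak{so}_3(\mathbb R)$) and the finiteness of $\pi_1(G)$. As a side remark, for the product stabilizers $H=T^1\times T^1$ and $H=1\times T^1$ one could invoke Lemma \ref{egyszerso3} instead; only a diagonally embedded $1$-torus genuinely requires the fundamental-group argument.
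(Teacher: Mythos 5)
Your argument is correct and is exactly the paper's proof, which is stated in one line just before the Proposition: the at most $2$-dimensional connected subgroups of $G$ are tori, and since $\dim L\ge 4$ forces $\dim H\le 2$, Lemma \ref{fundamenta} gives the contradiction between the finite $\pi_1(G)$ and the infinite $\pi_1(H)\cong\mathbb Z^k$. Your additional remarks (connectedness of $H$ via Lemma \ref{conjugate}, the explicit bound $|\pi_1(G)|\le 4$) just fill in details the paper leaves implicit.
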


\noindent
Now let $G$ be locally isomorphic to 
$PSL_2(\mathbb R) \times G_2$, where $G_2$ is either the group 
$SO_3(\mathbb R)$ or $PSL_2(\mathbb R)$. Using the real basis of 
$\mathfrak{sl_2}(\mathbb R)$ respectively of $\mathfrak{so_3}(\mathbb R)$ 
introduced in {\bf 1.1} respectively in {\bf 1.2} we can choose 
$(e_1,0)$, $(e_2,0)$, 
$(e_3,0)$, $(0,  \varepsilon e_1)$, $(0, \varepsilon  e_2)$, $(0, e_3)$ as 
 a real basis of the Lie algebra 
${\bf g}=\mathfrak{sl_2}(\mathbb R) \oplus {\bf g}_2$, where 
$\varepsilon =\hbox{i}$ with $\hbox{i} ^2=-1$ for  
${\bf g}_2=\mathfrak{so_3}(\mathbb R)$ and $\varepsilon =1$ for  
${\bf g}_2=\mathfrak{sl_2}(\mathbb R)$. 
\newline
Denote by $H$ a subgroup of $G$. 
First  we assume that  $H$ is decomposable into 
a direct product.   
If $H$ has dimension $2$ then  
with Lemma \ref{egyszerso3} we obtain that 
$H$ is (up to interchanging the components) either 
${\mathcal L}_2 \times \{ 1 \}$ or $K_1 \times K_2$, where $K_i$, $i=1,2$ 
are $1$-dimensional 
subgroups of $PSL_2(\mathbb R)$. Now  according to {\bf 1.1} the Lie algebra 
${\bf h}$ of $H$ has one of the following forms:

\medskip
\noindent
\centerline{${\bf h}_1=\langle (e_3,0), (0, e_3) \rangle $, \ \ 
${\bf h}_2=\langle (e_3,0), (0, e_2+e_3) \rangle $, \ \  
${\bf h}_3=\langle (e_3,0), (0, e_1) \rangle $, }

\medskip
\noindent
\centerline{${\bf h}_4=\langle (e_1,0), (0, e_1) \rangle $, \ \ 
${\bf h}_5=\langle (e_1,0), (0, e_2+e_3) \rangle $, }

\medskip
\noindent
\centerline{${\bf h}_6=\langle (e_2+e_3,0), (0, e_2+e_3) \rangle $, \ \ 
${\bf h}_7=\langle (e_1,0), (e_2+e_3,0) \rangle $.} 

\medskip
\noindent 
The Lie algebras ${\bf h}_1$ till ${\bf h}_7$ are subalgebras of 
${\bf g}=\mathfrak{sl_2}(\mathbb R) \oplus \mathfrak{sl_2}(\mathbb R)$ 
but  ${\bf h}_7$ is also a subalgebra of 
${\bf g}=\mathfrak{sl_2}(\mathbb R) \oplus \mathfrak{so_3}(\mathbb R)$. 
\newline
If $\hbox{dim}\ H=1$ then $H$ has the shape $K_1 \times \{ 1 \}$ with a 
$1$-dimensional 
subgroup $K_1$  of $PSL_2(\mathbb R)$. 
Then according to {\bf 1.1} the Lie algebra ${\bf h}$ of 
$H$ has (up to interchanging the components) one of the following forms:

\medskip
\noindent
\centerline{${\bf h}_8=\langle (e_3,0)  \rangle $, \ \ 
${\bf h}_9=\langle (e_1,0)  \rangle $, \ \  
${\bf h}_{10}=\langle (e_2+e_3,0)  \rangle $. }

\medskip
\noindent
These algebras are subalgebras of 
${\bf g}=\mathfrak{sl_2}(\mathbb R) \oplus \mathfrak{sl_2}(\mathbb R)$ 
as well as 
${\bf g}=\mathfrak{sl_2}(\mathbb R) \oplus \mathfrak{so_3}(\mathbb R)$. 
\newline
Now we suppose that  $H$ is not a  direct product of two subgroups.  In 
the case  
$\hbox{dim}\ H=2$ one has 
$H=\{ (x, \varphi (x))|\ x \in {\mathcal L}_2 \}$,  where  
$\varphi \neq 1$ is a homomorphism of ${\mathcal L}_2$
into  $PSL_2(\mathbb R)$. If $\varphi $ is injective  then the Lie algebra 
of $H$ is a subalgebra of 
$\mathfrak{sl_2}(\mathbb R) \oplus \mathfrak{sl_2}(\mathbb R)$ and 
has the shape 

\medskip
\noindent
\centerline{${\bf h}_{11}= \langle (e_1,e_1), (e_2+e_3, e_2+e_3) \rangle $.}
 
\medskip
\noindent
If $\varphi $ has $1$-dimensional kernel  then the Lie algebra of $H$ is 
given by 

\medskip
\noindent
\centerline{${\bf h}_{12}= \langle (e_1,k), (e_2+e_3, 0) \rangle $,}  

\medskip
\noindent
where $k$ denotes 
either the element $e_1$ or $e_2+e_3$ of $\mathfrak{sl_2}(\mathbb R)$ 
or $e_3$ of $\mathfrak{sl_2}(\mathbb R) \cap \mathfrak{so_3}(\mathbb R) $ 
(see {\bf 1.1} and {\bf 1.2}).   

\medskip
\noindent 
In the case  $\hbox{dim}\ H=1$ one has 
$H=\{ (k_1, \varphi(k_1))| \ k_1 \in K_1\}$, where  $K_1$ is a 
$1$-dimensional subgroup of 
$PSL_2(\mathbb R)$ 
and $\varphi \neq 1$ is a homomorphism of $K_1$ 
into  $PSL_2(\mathbb R)$ or $SO_3(\mathbb R)$.
Then the Lie algebra ${\bf h}$  of  $H$  has (up to interchanging the 
components) one of the 
following forms:  

\medskip
\noindent
\centerline{${\bf h}_{13}=\langle (e_1,e_1) \rangle $, \ \  
${\bf h}_{14}=\langle (e_1,e_2+e_3) \rangle $, \ \  
${\bf h}_{15}=\langle (e_2+e_3,e_2+e_3) \rangle $, }

\medskip
\noindent
\centerline{${\bf h}_{16}=\langle (e_1,e_3) \rangle $, \ \ 
${\bf h}_{17}=\langle (e_2+e_3,e_3) \rangle $, \ \  
${\bf h}_{18}=\langle (e_3,e_3) \rangle $. }

\medskip
\noindent
The Lie algebra ${\bf h}_{13}$ till ${\bf h}_{18}$ are subalgebras 
of ${\bf g}=\mathfrak{sl_2}(\mathbb R) \oplus \mathfrak{sl_2}(\mathbb R)$ 
but 
 ${\bf h}_{16}, {\bf h}_{17}, {\bf h}_{18}$ are also 
subalgebras 
of  
${\bf g}=\mathfrak{sl_2}(\mathbb R) \oplus \mathfrak{so_3}(\mathbb R)$. 

\begin{Prop} \label{reductiveslg2} The Lie algebra 
${\bf g}=\mathfrak{sl_2}(\mathbb R) 
\oplus {\bf g}_2$, where ${\bf g}_2$ is a $3$-dimensional simple Lie 
algebra,  
 is reductive with an at most $2$-dimensional subalgebra 
${\bf h}$ and a complementary subspace ${\bf m}$ generating ${\bf g}$ in 
exactly one of the 
following cases:  

\medskip
\noindent
1) ${\bf h}_8=\langle (e_3,0) \rangle $, ${\bf m}_a=\langle (e_1,0), (e_2,0), (0, \varepsilon e_1), (0, \varepsilon e_2), (a e_3, e_3) \rangle $,

\medskip
\noindent
2) ${\bf h}_8=\langle (e_3,0) \rangle $, ${\bf m}_b=\langle (e_1,0), (e_2,0), (0, \varepsilon e_1), (b e_3,  \varepsilon e_2), (0, e_3) \rangle $,

\medskip
\noindent
3) ${\bf h}_8=\langle (e_3,0) \rangle $, ${\bf m}_c=\langle (e_1,0), (e_2,0), (c e_3,  \varepsilon e_1), (0,  \varepsilon e_2), (0, e_3) \rangle $,

\medskip
\noindent
4) ${\bf h}_9=\langle (e_1,0) \rangle $, ${\bf m}_d=\langle (e_2,0), 
(e_3,0), (0,  \varepsilon e_1), (0,  \varepsilon e_2), (d e_1, e_3) \rangle 
$,  

\medskip
\noindent
5) ${\bf h}_9=\langle (e_1,0) \rangle $, ${\bf m}_f=\langle (e_2,0), (e_3,0), (0,  \varepsilon e_1), (f e_1,  \varepsilon e_2), (0, e_3) 
\rangle $, 

\medskip
\noindent
6) ${\bf h}_9=\langle (e_1,0) \rangle $, ${\bf m}_g=\langle (e_2,0), (e_3,0), (g e_1,  \varepsilon e_1), (0,  \varepsilon e_2), (0, e_3) 
\rangle $, 

\medskip
\noindent   
7) ${\bf h}_{16}=\langle (e_1,e_3) \rangle $, ${\bf m}_h=\langle (e_2,0), 
(e_3,0), (0, \varepsilon e_1), (0, \varepsilon e_2), (h e_1, (1+h) e_3) 
\rangle $,

\medskip
\noindent
8) ${\bf h}_{17}=\langle (e_2+e_3,e_3) \rangle $, 
${\bf m}_k=\langle (e_3,k e_3), 
(e_1,0), (0, \varepsilon e_1), (0, \varepsilon e_2), (e_2+e_3, 0) 
\rangle $, 

\medskip
\noindent
9) ${\bf h}_{18}=\langle (e_3,e_3) \rangle $, 
${\bf m}_l=\langle (l e_3,(1+l) e_3), 
(e_1,0), (e_2,0),  (0, \varepsilon e_1), 
(0, \varepsilon e_2) \rangle $,  

\medskip
\noindent
10) ${\bf h}_1=\langle (e_3,0), (0, e_3) \rangle $, 
${\bf m}_1=\langle (e_1,0), (e_2,0), (0, e_1), (0, e_2) \rangle $, 

\medskip
\noindent
11) ${\bf h}_3=\langle (e_3,0), (0, e_1) \rangle $, 
${\bf m}_3=\langle (e_1,0), (e_2,0), (0, e_2), (0, e_3) \rangle $, 

\medskip
\noindent
12) ${\bf h}_4=\langle (e_1,0), (0, e_1) \rangle $, 
${\bf m}_4=\langle (e_2,0), (e_3,0), (0, e_2), (0, e_3) \rangle $, 

\medskip
\noindent
13) ${\bf h}_{13}=\langle (e_1,e_1) \rangle $, 
${\bf m}_m=\langle  (e_2,0), (e_3,0), 
(0,e_3), (0,e_2), (m e_1,(1+m)e_1) \rangle$,

\medskip
\noindent
14) ${\bf h}_{14}=\langle (e_1,e_2+e_3) \rangle $, 
${\bf m}_n=\langle (e_2,0), (e_3,0), 
(0,e_1), (0,e_2+e_3), (n e_1,e_2) \rangle $, 

\medskip
\noindent
where $a,b,c,d,f,g,h,k,l,m,n \in \mathbb R$ and 
$\varepsilon =$ $\hbox{i}$ for  ${\bf g}_2=\mathfrak{so_3}(\mathbb R)$ 
whereas 
$\varepsilon =1$ for  ${\bf g}_2=\mathfrak{sl_2}(\mathbb R)$. The cases 
1) till 10) occur for both simple $3$-dimensional Lie algebras whereas 
 the cases 10) till 14) occur only for 
${\bf g}_2=\mathfrak{sl_2}(\mathbb R)$.   
\end{Prop}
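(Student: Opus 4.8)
The plan is to work through the complete list of candidate stabilizer algebras ${\bf h}_1,\dots,{\bf h}_{18}$ assembled just above the statement, and for each one impose the reductivity condition $[{\bf h},{\bf m}]\subseteq{\bf m}$ together with the requirement that ${\bf m}$ generate ${\bf g}$, exactly in the style of the proofs of Propositions \ref{sl3R4dim}--\ref{sl3R1dim}. For a fixed ${\bf h}_i$ I would first write down a generic complement ${\bf m}_i$: choose $6-\dim{\bf h}_i$ of the basis vectors $(e_j,0),(0,\varepsilon e_j)$ which together with a basis of ${\bf h}_i$ span ${\bf g}$, and add to each of them an arbitrary linear combination of the basis of ${\bf h}_i$ with real parameters. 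Every complement to ${\bf h}_i$ arises in this way.

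Then $[{\bf h}_i,{\bf m}_i]\subseteq{\bf m}_i$ becomes a finite system of equations in those parameters (linear, or at worst quadratic where two parameters multiply inside a bracket). Solving it either yields the families displayed in the statement or produces a contradiction -- typically a bracket $[{\bf h}_i,{\bf m}_i]$ landing back in ${\bf h}_i$, as in the proof of Proposition \ref{sl2c}. This step eliminates ${\bf h}_2,{\bf h}_5,{\bf h}_6,{\bf h}_7,{\bf h}_{10},{\bf h}_{11},{\bf h}_{12},{\bf h}_{15}$ outright and pins down the reductive complements for the remaining algebras ${\bf h}_1,{\bf h}_3,{\bf h}_4,{\bf h}_8,{\bf h}_9,{\bf h}_{13},{\bf h}_{14},{\bf h}_{16},{\bf h}_{17},{\bf h}_{18}$.

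Among the complements that survive the reductivity test one must still check generation: such a complement ${\bf m}$ is admissible only if the subalgebra it generates is all of ${\bf g}$. Since ${\bf g}_2$ is simple of dimension $3$, its proper subalgebras are at most $2$-dimensional, so failure of generation is easy to detect -- it occurs precisely when the brackets of the chosen generators close up inside ${\bf h}+{\bf g}_1$, inside ${\bf g}_1\oplus(\text{a proper subalgebra of }{\bf g}_2)$, or symmetrically -- and this is where the reductive-but-non-generating complements of the borderline cases get discarded. Here I would also invoke Lemma \ref{mdimenzioja} to control $\dim({\bf m}\cap{\bf g}_1)$ and $\dim({\bf m}\cap{\bf g}_2)$, which keeps the case analysis finite; note that Lemmas \ref{fundamenta}, \ref{conjugate} and \ref{egyszerso3} are \emph{not} needed at this point (they enter only in Section 5, when one asks which of these reductive spaces actually carry a sharply transitive section), so the present proof is purely Lie-algebraic.

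Finally I would separate the two possibilities for ${\bf g}_2$. A case survives for ${\bf g}_2=\mathfrak{so_3}(\mathbb R)$ only if every element of ${\bf g}_2$ occurring in ${\bf h}_i$ or ${\bf m}_i$ is elliptic (every element of $\mathfrak{so_3}(\mathbb R)$ is conjugate to $e_3$), whereas a subalgebra whose ${\bf g}_2$-component is of hyperbolic ($e_1$) or parabolic ($e_2+e_3$) type simply does not exist inside $\mathfrak{sl_2}(\mathbb R)\oplus\mathfrak{so_3}(\mathbb R)$; replacing $e_1,e_2$ by $\hbox{i}e_1,\hbox{i}e_2$ in the ${\bf g}_2$-slot and tracking the single sign change $[\hbox{i}e_1,\hbox{i}e_2]=-2e_3$ shows the reductivity computation is otherwise identical, which is why the low-numbered cases hold for both Lie algebras while those whose stabilizer or complement genuinely uses a non-elliptic element in the second summand occur only for ${\bf g}_2=\mathfrak{sl_2}(\mathbb R)$, as recorded at the end of the statement. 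The main obstacle is purely the bookkeeping: eighteen candidate subalgebras, several carrying free parameters, each producing its own system of equations, and for each surviving solution a generation check -- no single step is deep, but the computation must be organized carefully (grouping the ${\bf h}_i$ by $\dim{\bf h}_i$ and by whether they are direct products) to remain manageable.
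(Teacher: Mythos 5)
Your proposal follows essentially the same route as the paper: for each candidate subalgebra ${\bf h}_1,\dots,{\bf h}_{18}$ one writes a generic complement with real parameters, imposes $[{\bf h}_i,{\bf m}_i]\subseteq{\bf m}_i$ (eliminating a subalgebra whenever a bracket lands in ${\bf h}_i\setminus\{0\}$), and combines this with Lemma~\ref{mdimenzioja} and the generation requirement to arrive at the listed cases, with the $\varepsilon$ substitution distinguishing the two choices of ${\bf g}_2$. This is exactly the computation the paper carries out, so the proposal is correct.
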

\begin{proof}
The basis elements of an arbitrary complement ${\bf m}_i$ to ${\bf h}_i$, 
$i=1, \cdots , 18$, in 
${\bf g}=\mathfrak{sl_2}(\mathbb R) \oplus {\bf g}_2$, where 
${\bf g}_2$ is either $\mathfrak{sl_2}(\mathbb R)$ or 
$\mathfrak{so_3}(\mathbb R)$,  are:  
\newline
In the case $i=1$ 

\medskip
\noindent 
\centerline{$(e_1+a_1 e_3, a_2 e_3)$, \ \ $(e_2+b_1 e_3, b_2 e_3)$, \ \ $(c_1 e_3, e_1+c_2 e_3)$, \ \ $(d_1 e_3, e_2+d_2 e_3)$, }

\medskip
\noindent 
in the case $i=2$ 

\medskip
\noindent 
\centerline{$(e_1+a_1 e_3, a_2 (e_2+ e_3))$, \ \ 
$(e_2+b_1 e_3, b_2 (e_2+e_3))$, }

\medskip
\noindent 
\centerline{$(c_1 e_3, e_1+c_2 (e_2+e_3))$, \ \ 
$(d_1 e_3, e_3+d_2 (e_2+e_3))$, }
 
\medskip
\noindent 
in the case $i=3$ 

\medskip
\noindent 
\centerline{$(e_1+a_1 e_3, a_2 e_1)$, \ \ $(e_2+b_1 e_3, b_2 e_1)$, \ \ 
$(c_1 e_3, e_2+c_2 e_1)$, \ \ $(d_1 e_3, e_3+d_2 e_1)$, }
 
\medskip
\noindent 
in the case $i=4$ 

\medskip
\noindent 
\centerline{$(e_2+a_1 e_1, a_2 e_1)$, \ \ $(e_3+b_1 e_1, b_2 e_1)$, \ \ 
$(c_1 e_1, e_2+c_2 e_1)$, \ \ $(d_1 e_1, e_3+d_2 e_1)$, }
 
\medskip
\noindent 
in the case $i=5$ 

\medskip
\noindent 
\centerline{$(e_2+a_1 e_1, a_2 (e_2+e_3))$, \ \ $(e_3+b_1 e_1, b_2 (e_2+e_3))$, } 

\medskip
\noindent 
\centerline{$(c_1 e_1, e_1+c_2 (e_2+e_3))$, \ \ $(d_1 e_1, e_3+d_2 (e_2+e_3))$, }
 
\medskip
\noindent 
in the case $i=6$ 

\medskip
\noindent 
\centerline{$(e_1+a_1 (e_2+e_3), a_2 (e_2+e_3))$, \ \ $(e_3+b_1 (e_2+e_3), b_2 (e_2+e_3))$, } 

\medskip
\noindent 
\centerline{$(c_1 (e_2+e_3), e_1+c_2 (e_2+e_3))$, \ \ $(d_1 (e_2+e_3), e_3+d_2 (e_2+e_3))$, }

\medskip
\noindent 
in the case $i=7$ 

\medskip
\noindent 
\centerline{$(e_3+a_1 e_1 +a_2 (e_2+e_3),0)$, \ \ $(b_1 e_1+ b_2 (e_2+e_3), 
\varepsilon e_1)$, } 

\medskip
\noindent 
\centerline{$(c_1 e_1+c_2 (e_2+e_3), \varepsilon e_2)$, \ \ 
$(d_1 e_1+d_2 (e_2+e_3), e_3)$, }

\medskip
\noindent
in the case $i=8$ 

\medskip
\noindent
\centerline{$(e_1+a_1 e_3,0)$, \ \ $(e_2+a_2 e_3,0)$, \ \ 
$(a_3 e_3, \varepsilon e_1)$, \ \ $(a_4 e_3, \varepsilon e_2)$, \ \  
$(a_5 e_3,e_3)$, }   

\medskip
\noindent
in the case $i=9$ 

\medskip
\noindent
\centerline{$(e_2+a_1 e_1,0)$, \ \ $(e_3+a_2 e_1,0)$, \ \ 
$(a_3 e_1, \varepsilon e_1)$, \ \ $(a_4 e_1, \varepsilon e_2)$, \ \  
$(a_5 e_1,e_3)$, }

\medskip
\noindent
in the case $i=10$ 

\medskip
\noindent
\centerline{$(e_2+a_1 (e_2+e_3),0)$, \ \ $(e_1+a_2 (e_2+e_3),0)$, \ \ 
$(a_3 (e_2+e_3), \varepsilon e_1)$, }

\medskip
\noindent
\centerline{$(a_4 (e_2+e_3), \varepsilon e_2)$, \ \  $(a_5 (e_2+e_3), e_3)$, }

\medskip
\noindent
in the case $i=11$ 

\medskip
\noindent
\centerline{$(e_3+a_1 e_1+a_2(e_2+e_3),a_1 e_1+a_2(e_2+e_3))$, }

\medskip
\noindent
\centerline{$(b_1 e_1+b_2(e_2+e_3),e_1+b_1 e_1+b_2(e_2+e_3))$, }

\medskip
\noindent
\centerline{$(c_1 e_1+c_2(e_2+e_3),e_2+c_1 e_1+c_2(e_2+e_3))$, }  

\medskip
\noindent
\centerline{$(d_1 e_1+d_2(e_2+e_3),e_3+d_1 e_1+d_2(e_2+e_3))$, }

\medskip
\noindent
in the case $i=12$  

\medskip
\noindent
\centerline{$(e_3+a_1 e_1+a_2(e_2+e_3),a_1 k)$, \  
$(b_1 e_1+b_2(e_2+e_3), \varepsilon e_1+b_1 k)$, }

\medskip
\noindent
\centerline{$(c_1 e_1+c_2(e_2+e_3), \varepsilon e_2+c_1 k)$, \  
$(d_1 e_1+d_2(e_2+e_3),e_3+d_1 k)$, }

\medskip
\noindent
in the case $i=13$ 

\medskip
\noindent
\centerline{$(e_2+a_1 e_1,a_1 e_1)$, \ \ $(e_3+a_2 e_1,a_2 e_1)$, \ \ 
$(a_3 e_1, e_1+a_3 e_1)$, }

\medskip
\noindent
\centerline{$(a_4 e_1, e_2+a_4 e_1)$, \ \  $(a_5 e_1,e_3+a_5 e_1)$, }

\medskip
\noindent
in the case $i=14$ 

\medskip
\noindent
\centerline{$(e_2+a_1 e_1,a_1 (e_2+e_3))$, \ \ $(e_3+a_2 e_1,a_2 (e_2+e_3))$, \ \ 
$(a_3 e_1, e_1+a_3 (e_2+e_3))$, }

\medskip
\noindent
\centerline{$(a_4 e_1, e_2+a_4 (e_2+e_3))$, \ \  $(a_5 e_1,e_3+a_5 (e_2+e_3))$, }

\medskip
\noindent
in the case $i=15$ 

\medskip
\noindent
\centerline{$(e_2+a_1 (e_2+e_3),a_1 (e_2+e_3))$, \ \ $(e_1+a_2 (e_2+e_3),a_2 (e_2+e_3))$, }

\medskip
\noindent
\centerline{$(a_3 (e_2+e_3), e_1+a_3 (e_2+e_3))$, \ \ $(a_4 (e_2+e_3), e_2+a_4 (e_2+e_3))$, }

\medskip
\noindent
\centerline{$(a_5 (e_2+e_3),e_3+a_5 (e_2+e_3))$, }

\medskip
\noindent
in the case $i=16$ 

\medskip
\noindent
\centerline{$(e_2+a_1 e_1,a_1 e_3)$, \ \ $(e_3+a_2 e_1,a_2 e_3)$, \ \ 
$(a_3 e_1, \varepsilon e_1+a_3 e_3)$, }

\medskip
\noindent
\centerline{$(a_4 e_1, \varepsilon e_2+a_4 e_3)$, \ \  
$(a_5 e_1,e_3+a_5 e_3)$, }

\medskip
\noindent
in the case $i=17$

\medskip
\noindent
\centerline{$(e_2+a_1 (e_2+e_3),a_1 e_3)$, \ \ $(e_1+a_2 (e_2+e_3),a_2 e_3)$, \ \ 
$(a_3 (e_2+e_3), \varepsilon e_1+a_3 e_3)$, }

\medskip
\noindent
\centerline{$(a_4 (e_2+e_3), \varepsilon e_2+a_4 e_3)$, \ \  
$(a_5 (e_2+e_3),e_3+a_5 e_3)$, }

\medskip
\noindent
in the case $i=18$ 

\medskip
\noindent
\centerline{$(e_1+a_1 e_3,a_1 e_3)$, \ \ $(e_2+a_2 e_3,a_2 e_3)$, \ \ 
$(a_3 e_3, \varepsilon e_1+a_3 e_3)$, } 

\medskip
\noindent
\centerline{$(a_4 e_3, \varepsilon e_2+a_4 e_3)$, \ \  $(a_5 e_3,e_3+a_5 e_3)$, } 

\medskip
\noindent
where $a_i$, $i=1,2, \cdots ,5$, $b_j$, $j=1,2$, $c_k$, $k=1,2$, $d_l$, 
$l=1,2$, are real parameters, $\varepsilon =\hbox{i}$ for 
${\bf g}_2=\mathfrak{so_3}(\mathbb R)$ and $\varepsilon =1$ for 
${\bf g}_2=\mathfrak{sl_2}(\mathbb R)$.  
\newline 
Using the relation $[{\bf h}_i,{\bf m}_i] \subseteq {\bf m}_i$, 
$i=1, \cdots ,18$,  and   Lemma \ref{mdimenzioja} we obtain the assertion. 
\end{proof}

\begin{Prop} Let $G$ be locally isomorphic to 
$PSL_2(\mathbb R) \times G_2$, where $G_2$ is either $PSL_2(\mathbb R)$ or 
$SO_3(\mathbb R)$. If $G$ is the group topologically generated by the 
 left translations of a connected almost differentiable  left A-loop $L$ 
then $L$ is either  a Scheerer extension of 
$G_2$ by $\mathbb H_2$ or  the direct product  
$\mathbb H_2 \times \mathbb H_2$, where $\mathbb H_2$ denotes 
 the hyperbolic plane loop. 
In the second case $G$ is isomorphic to 
$PSL_2(\mathbb R) \times PSL_2(\mathbb R)$.  
\end{Prop}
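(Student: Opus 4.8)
The plan is to run the tangent-level list of Proposition~\ref{reductiveslg2} against the three exclusion tools already at hand (Lemmas~\ref{fundamenta}, \ref{egyszerso3}, \ref{conjugate}) and then to realise the few survivors explicitly as sections. Since $T_eL$ is reductive, the pair $({\bf h},{\bf m})=(T_1H,T_eL)$ is, up to conjugacy, one of the fourteen pairs listed there; recall that 10)--14) occur only for ${\bf g}_2=\mathfrak{sl_2}(\mathbb R)$, and that 10) will turn out to be the source of $\mathbb H_2\times\mathbb H_2$.

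First I would eliminate cases by Lemma~\ref{conjugate}. In 4), 5), 6) the element $(e_2,0)\in{\bf m}$ is hyperbolic in the first factor, hence $Ad\,G$-conjugate to $(e_1,0)\in{\bf h}_9$. In 7) with $h\neq-1$ a suitable combination $(he_1+\alpha e_2+\beta e_3,(1+h)e_3)\in{\bf m}_h$ is conjugate to $(1+h)(e_1,e_3)\in{\bf h}_{16}$, and in 8) with $k\neq0$ the element $(e_1+e_3,ke_3)\in{\bf m}_k$ is conjugate to $k(e_2+e_3,e_3)\in{\bf h}_{17}$; these use the conjugacy of hyperbolic, resp.\ parabolic, elements of $\mathfrak{sl_2}(\mathbb R)$ recorded in 1.1. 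For 9), where ${\bf h}_{18}=\langle(e_3,e_3)\rangle$ is elliptic in both factors, one must keep track of the two $PSL_2(\mathbb R)$-orbits of elliptic elements of a given norm: a short sign analysis shows that ${\bf m}_l$ contains a nonzero element conjugate to an element of ${\bf h}_{18}$ precisely when $l\notin(-1,0)$, and --- for ${\bf g}_2=\mathfrak{so_3}(\mathbb R)$, which has a single elliptic orbit of each norm --- also at $l=-1$ (e.g.\ $(e_3,\hbox{i}e_1)\in{\bf m}_{-1}$ is conjugate to $(e_3,e_3)$). The same hyperbolic/parabolic mechanism discards 11)--14), and Lemma~\ref{egyszerso3} discards 10) when ${\bf g}_2=\mathfrak{so_3}(\mathbb R)$, where $H=SO_2(\mathbb R)\times SO_2(\mathbb R)$ with the second factor nontrivial in $G_2\cong SO_3(\mathbb R)$.

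The main work is to dispose of the remaining ``false positives'' and to globalise the genuine ones via the second half of Lemma~\ref{conjugate}: a global loop exists iff $\exp{\bf m}$ is a system of representatives for $\{xH\}$. For 7) with $h=-1$ and 8) with $k=0$ one computes $\exp{\bf m}=PSL_2(\mathbb R)\times P$, where $P=\exp\langle e_1,e_2\rangle$ is the Cartan transversal realising $\mathbb H_2$, while $H=\{(\exp tX,\exp te_3)\}$ with $X$ hyperbolic (case 7) or parabolic (case 8); since $t\mapsto\exp te_3$ is periodic but $t\mapsto\exp tX$ is injective, the identity coset $H$ meets $PSL_2(\mathbb R)\times P$ in the infinite set $\{(\exp(n\pi X),1):n\in\mathbb Z\}$, so $\exp{\bf m}$ is not a transversal. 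The same periodicity argument removes the remaining parameter values: in 1), 2), 3) with the ${\bf h}_8$-part of some generator nonzero, and in 9) with $l\notin\mathbb Z$, the two circle subgroups contained in $H$ run at incommensurable speeds, so $\exp{\bf m}$ again meets the identity coset more than once. Only $a=b=c=0$ in 1)--3), $l\in\{0,-1\}$ in 9) (with $l=-1$ admissible only for $\mathfrak{sl_2}(\mathbb R)$, by the previous paragraph), and 10) for $\mathfrak{sl_2}(\mathbb R)$ survive.

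It remains to name the survivors. For 1)--3) with $a=b=c=0$ the section is $P\times G_2$ over $G/(SO_2(\mathbb R)\times\{1\})$, so $L$ is the trivial Scheerer extension $\mathbb H_2\times G_2$ of $G_2$ by $\mathbb H_2$. For 9) with $l=0$ (section $P\times G_2$) and $l=-1$ (section $PSL_2(\mathbb R)\times P$) the stabiliser is the compact twisted circle $\{(\exp te_3,\exp te_3)\}$, whose two coordinate circles have equal period; after interchanging factors the section has the form $M_1\times G_2$, so $L$ is a nontrivial Scheerer extension of $G_2$ by $\mathbb H_2$, with $\varphi$ the inclusion of a circle subgroup. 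For 10) the section is $P\times P$ over $G/(SO_2(\mathbb R)\times SO_2(\mathbb R))$, so $L=\mathbb H_2\times\mathbb H_2$; since the group topologically generated by the left translations of $\mathbb H_2$ is $PSL_2(\mathbb R)$, the group of this $L$ is $PSL_2(\mathbb R)\times PSL_2(\mathbb R)$, which also pins down the global form. I expect the delicate points to be the orbit/sign bookkeeping in case 9) and the verification that a twisted circle yields a transversal exactly when the two factor periods coincide.
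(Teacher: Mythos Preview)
Your overall strategy---run the list from Proposition~\ref{reductiveslg2} against Lemma~\ref{conjugate} and then test the survivors for global transversality---is exactly the paper's approach, and your eliminations of cases 4)--6) and 11)--14) by conjugacy are fine. The differences and gaps are concentrated in cases 1)--3) and 7)--9).

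First, your periodicity argument in paragraph~3 is not right as stated. In cases 1)--3) and 9) the stabiliser $H$ is the exponential of a \emph{one}-dimensional subalgebra, so there are no ``two circle subgroups contained in $H$'' running at incommensurable speeds; the argument you describe does not apply, and in particular would not rule out integer values of the parameter. The paper instead exhibits, for every $a\neq 0$ (resp.\ $b,c\neq 0$), two explicit elements $v,w\in{\bf m}$ whose exponentials lie in the same $H$-coset: $v=(ke_3,\tfrac{k}{a}\varepsilon e_j)$ and $w=(\sqrt{k^2-4\pi^2}\,e_2+ke_3,\tfrac{k}{a}\varepsilon e_j)$ with an integer $k>2\pi$, so that the first component of $w$ is parabolic and $\exp w$ has first component $I$. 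For cases 7)--9) with generic parameter the paper does not argue directly at all but invokes the computations of \cite{figula3}, Proposition~13.

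Second, your case 9) bookkeeping is internally inconsistent. You claim the conjugacy obstruction hits ``precisely when $l\notin(-1,0)$'', but at $l=0$ the second component of any element of ${\bf m}_0$ lies in $\langle e_1,e_2\rangle$ and is never elliptic, and symmetrically at $l=-1$ (for $\varepsilon=1$) the first component is never elliptic; so the correct conjugacy range is $l\notin[-1,0]$. Your stated range would already kill $l\in\{0,-1\}$, contradicting your later claim that precisely these survive.

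Finally, you eliminate 7) at $h=-1$ and 8) at $k=0$ for $\varepsilon=1$ by the transversality failure, whereas the paper keeps them and declares them Scheerer extensions with $\varphi:SO_2(\mathbb R)\to$ (hyperbolic or parabolic one-parameter subgroup). Your objection is in fact correct: no homomorphism from the compact circle onto a noncompact one-parameter subgroup exists, and indeed $H_{16}\cap(PSL_2(\mathbb R)\times M)$ is infinite. This is a slip in the paper's proof, though it does not affect the statement of the Proposition, since the surviving cases (1)--3) at $a=b=c=0$ and 9) at $l\in\{0,-1\}$) already furnish the Scheerer extensions.
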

\begin{proof} Since we assume that 
$\hbox{dim}\ L \ge 4$ 
we have to consider only the pairs $({\bf h}, {\bf m})$ in 
Proposition  \ref{reductiveslg2}. 
Now using  {\bf 1.1} and {\bf 1.2}  we obtain that 
 the element $(0, e_1) \in {\bf h}_3 \cap {\bf h}_4$, the 
element $(e_1,0) \in {\bf h}_9$, the element $(e_1,e_1) \in {\bf h}_{13}$ 
respectively 
the element $(e_1,e_2+e_3) \in {\bf h}_{14}$ is conjugate in this order to 
$(0,e_2) \in {\bf m}_3 \cap {\bf m}_4$, to 
$(e_2,0) \in {\bf m}_d \cap {\bf m}_f \cap {\bf m}_g$, to 
$(e_2,e_2) \in {\bf m}_{m}$ 
respectively to $(e_2,e_2+e_3) \in {\bf m}_{n}$. Hence there exists no 
global left A-loop $L$ 
such that $T_e L$ is a reductive complement listed in the cases 4), 5), 6), 
 11), 12), 13), 14)  
 (see. Lemma \ref{conjugate}). 

\medskip
\noindent
Now we consider the reductive complements ${\bf m}_a$, ${\bf m}_b$, 
${\bf m}_c$ in 1) till 3) of  Proposition \ref{reductiveslg2}. 
First we assume that  $a \neq 0$, $b \neq 0$, $c \neq 0$. 
The vectors $v_{j,l}=\left( k e_3,  \frac{k}{l} \varepsilon e_j \right)$, 
$w_{j,l}=\left( \sqrt{ k^2 -4 \pi ^2 } e_2+ k e_3, \frac{k}{l} \varepsilon 
e_j \right)$, where $k > 2 \pi$ is an integer,  
are contained in the subspace ${\bf m}_a$ for $j=3$, $l=a$ and 
 $\varepsilon=1$,
in the  
subspace ${\bf m}_b$ for $j=2$, $l=b$, respectively in 
${\bf m}_c$ for $j=1$, $l=c$, where 
 $\varepsilon=1$ for 
${\bf g}_2= \mathfrak{sl_2}(\mathbb R)$ and  $\varepsilon=\hbox{i}$ for 
${\bf g}_2= \mathfrak{so_3}(\mathbb R)$.   
According to {\bf 1.1} and {\bf 1.2} the images of $v_{j,l},  w_{j,l}$, 
$j=1, 2, 3$,  under the exponential map have the following 
representatives in $PSL_2(\mathbb R) \times G_2$:   

\medskip
\noindent
\centerline{$m_1=\exp {v_{3,a}}=
\left (  A, 
 \left ( \begin{array}{rr}
\cos { \frac{k}{a}} & \sin { \frac{k}{a}} \\
-\sin { \frac{k}{a}} & \cos { \frac{k}{a}} \end{array} \right ) \right )$, } 
 
\medskip
\noindent
\centerline{$m_2=\exp {w_{3,a}}=\left ( I , \left (\begin{array}{rr}
\cos { \frac{k}{a}} & \sin { \frac{k}{a} } \\
-\sin { \frac{k}{a} } & \cos { \frac{k}{a}} \end{array} \right ) \right )$, } 
 
\medskip
\noindent
\centerline{$m_3=\exp {v_{2,b}}=
\left ( A, 
 \left ( \begin{array}{rr}
\cosh { (\frac{k}{b} \varepsilon ) } &  \sinh {(\frac{k}{b} \varepsilon )} \\
- \sinh {(\frac{k}{b} \varepsilon )} & \cosh {(\frac{k}{b} \varepsilon )} 
\end{array} 
\right ) \right )$, } 

\medskip
\noindent
\centerline{$m_4=\exp {w_{2,b}}=
\left ( \pm I,  
 \left ( \begin{array}{rr}
\cosh {(\frac{k}{b} \varepsilon )} & \sinh {(\frac{k}{b} \varepsilon )} \\
- \sinh {(\frac{k}{b} \varepsilon ) } & \cosh {(\frac{k}{b} \varepsilon )} 
\end{array} \right ) \right )$, } 
 
\medskip
\noindent
\centerline{$m_5=\exp {v_{1,c}}=\left (  A, 
 \left ( \begin{array}{cc}
\cosh {(\frac{k}{c} \varepsilon )}+ \sinh {(\frac{k}{c} \varepsilon )} & 0  \\
0  & \cosh {(\frac{k}{c} \varepsilon )}- \sinh {(\frac{k}{c} 
\varepsilon ) } \end{array} \right ) \right )$, }

\medskip
\noindent
\centerline{$m_6=\exp {w_{1,c}}=\left ( I,  
 \left ( \begin{array}{cc}
\cosh {(\frac{k}{c} \varepsilon )}+ \sinh {(\frac{k}{c} \varepsilon )}  &  0 \\
0 & \cosh {(\frac{k}{c} \varepsilon )}- \sinh {(\frac{k}{c} 
\varepsilon )} \end{array} \right ) \right )$, } 

\medskip
\noindent
where $A=\left ( \begin{array}{rr}
\cos {k} & \sin {k} \\
-\sin {k} & \cos {k} \end{array} \right )$, $\varepsilon =$ $\hbox{i}$ for 
${\bf g}_2=\mathfrak{so_3}(\mathbb R)$,   whereas  
$\varepsilon =1$ for ${\bf g}_2=\mathfrak{sl_2}(\mathbb R)$.    
For the representatives 

\medskip
\noindent
\centerline{$g_1=\left ( I , \left (\begin{array}{rr}
\cos { \frac{k}{a}} & \sin { \frac{k}{a} } \\
-\sin { \frac{k}{a} } & \cos { \frac{k}{a}} \end{array} \right ) \right )$, }

\medskip
\noindent
\centerline{$g_2=\left ( I,  \left ( \begin{array}{rr}
\cosh {(\frac{k}{b} \varepsilon )} & \sinh {(\frac{k}{b} \varepsilon )} \\
- \sinh { (\frac{k}{b} \varepsilon )} & \cosh { (\frac{k}{b} 
\varepsilon )} \end{array} \right ) \right )$, }

\medskip
\noindent
\centerline{$g_3=\left ( I,  
 \left ( \begin{array}{cc}
\cosh { (\frac{k}{c} \varepsilon )}+ 
\sinh {(\frac{k}{c} \varepsilon )}  &  0 \\
0 & \cosh { (\frac{k}{c} \varepsilon )}-  
\sinh { (\frac{k}{c} \varepsilon )}\end{array} \right ) \right )$ }

\medskip
\noindent
we have  $g_1=m_1 \cdot h_1=m_2$,  $g_2=m_3 \cdot h_1=m_4$, 
$g_3=m_5 \cdot h_1=m_6$ such that 
$h_1= \left ( A^{-1}, I \right )$. 
These facts  again contradict Lemma \ref{conjugate}.

\medskip
\noindent
For $a=0$, $b=0$, $c=0$ the 
complements ${\bf m}_a$, ${\bf m}_b$, ${\bf m}_c$ in 1) till 3) of  
Proposition \ref{reductiveslg2} reduce to 
${\bf m}_0=\langle (e_1,0), (e_2,0), (0, \varepsilon  e_1), 
(0, \varepsilon e_2), 
(0, e_3) \rangle $. The exponential image $\exp {\bf m}_0$ is the direct 
product  $M \times G_2$, such that $M$ is the image of the 
section corresponding to the hyperbolic plane loop $\mathbb H_2$ (cf. 
\cite{loops}, pp. 283-284) and $G_2$ is the group $PSL_2(\mathbb R)$ 
respectively $SO_3(\mathbb R)$ according whether 
 $\varepsilon =1$ or $\varepsilon =$ $\hbox{i} $.  Since 
$H$ has the shape 
$H_1 \times \{ 1 \}$, where 
$H_1 \cong SO_2(\mathbb R) \le PSL_2(\mathbb R)$ the  
global loop $L_0$ realized on  $\exp {\bf m}_0$ is the direct product of 
$\mathbb H_2$  and $G_2$.

\medskip
\noindent
Now we treat the complements ${\bf m}_h$,  ${\bf m}_k$, ${\bf m}_l$, 
$h,k,l \in \mathbb R$ of the cases 7) till 9) in Proposition 
\ref{reductiveslg2}. The 
reductive complement 
${\bf m}_a, a \in \mathbb R$, ${\bf m}_b, b \in \mathbb R$, respectively   
${\bf m}_c, c \in \mathbb R$ of Lemma 12 in 
\cite{figula3}, p. 404, is in this order a subspace of 
${\bf m}_h$,  ${\bf m}_k$, respectively  
${\bf m}_l$.  Moreover, the subalgebra ${\bf h}_{16}$ in the case 7) 
coincides with the subalgebra ${\bf h}$ in case 1) of Lemma 12 in 
\cite{figula3},  the subalgebra ${\bf h}_{17}$ in the case 8) 
is equal  with the subalgebra ${\bf h}$ in case 2) of Lemma 12 in 
\cite{figula3}, and the subalgebra ${\bf h}_{18}$ in the case 9) 
coincides with the subalgebra ${\bf h}$ in case 3) of Lemma 12 in 
\cite{figula3}, p. 404. Hence the same computations as in the proof of 
Proposition 13 in 
\cite{figula3}, pp. 404-406, show that for $h \neq -1$ the complement 
${\bf m}_h$, for $k \neq 0$ the complement 
${\bf m}_k$ and  
for $l \notin \{ 0, -1 \}$ the complement ${\bf m}_l$ cannot be the 
tangent space of a global almost differentiable left A-loop. 

\smallskip
\noindent
It remains to consider the complements ${\bf m}_{h=-1}$, 
${\bf m}_{k=0}$, ${\bf m}_{l=0}$ and  ${\bf m}_{l=-1}$. 
First let $\varepsilon =$ $\hbox{i}$. Then 
the element 
$(e_1,e_3) \in {\bf h}_{16}$ is conjugate to 
$(e_2,\hbox{i} e_1) \in {\bf m}_{h=-1}$, the element 
$(e_2+e_3,e_3) \in {\bf h}_{17}$ is conjugate to 
$(e_2+e_3,\hbox{i} e_1) \in {\bf m}_{k=0}$ and the element 
$(e_3,e_3) \in {\bf h}_{18}$ is 
conjugate to $(e_3,\hbox{i} e_1) \in {\bf m}_{l=-1}$ (see {\bf 1.2}), 
which are contradictions to Lemma \ref{conjugate}.  
Since the exponential image of the Lie algebra ${\bf h}_{18}$ 
has the shape $H_n=\{ (x, x^n) \ | \ x \in SO_2(\mathbb R), 
n \in \mathbb N \backslash \{ 0 \} \}$ the exponential image 
$M \times SO_3(\mathbb R)$ of the complement ${\bf m}_{l=0}$, where $M$ 
is the image of the section belonging to the hyperbolic plane loop 
$\mathbb H_2$ (cf. \cite{loops}, pp. 283-284), yields 
Scheerer extensions of $SO_3(\mathbb R)$ by $\mathbb H_2$ (cf. \cite{loops}, 
Section 2). 

\smallskip
\noindent
Finally let $\varepsilon =1$. The  complements ${\bf m}_{h=-1}$, 
${\bf m}_{k=0}$,  
${\bf m}_{l=-1}$ and ${\bf m}_{l=0}$ 
are (up to interchanging the components) equal to the vector space 

\medskip
\noindent
\centerline{${\bf m}'=\langle (e_1,0), (e_2,0), (e_3,0), 
(0, e_1), (0, e_2) \rangle $}

\medskip
\noindent
and its 
exponential image $\exp {\bf m}'$ is 
the direct 
product  $PSL_2(\mathbb R) \times M$, where  $M$ is the image of the 
section corresponding to  $\mathbb H_2$. 
The group $H=\{ (\varphi (x),x) \ | \ x \in SO_2(\mathbb R) \}$ coincides 
with the group $H_{16}$ belonging to  ${\bf h}_{16}$ 
respectively with $H_{17}$ of ${\bf h}_{17}$ if 
 $\varphi $ is a homomorphism from $SO_2(\mathbb R)$  onto a hyperbolic 
respectively  a 
parabolic $1$-parameter subgroup of $PSL_2(\mathbb R)$. 
The subgroup  $H_{18}$ of 
${\bf h}_{18}$ has the form: 
$H'_n=\{ (x^n, x) \ | \ x \in SO_2(\mathbb R), 
n \in \mathbb N \backslash \{ 0 \} \}$. According to \cite{loops}, 
Section 2, any loop $L$ realized  on the factor 
space $G/H_n$, $n=16, 17, 18,$ and having $\exp {\bf m}'$ as the image of 
its section is a  Scheerer extension of the Lie 
group $PSL_2(\mathbb R)$ by  $\mathbb H_2$.  

\smallskip
\noindent
All  Scheerer 
extensions having $PSL_2(\mathbb R) \times SO_3(\mathbb R)$ or 
$PSL_2(\mathbb R) \times PSL_2(\mathbb R)$ as the group topologically 
generated by their left translations
satisfy the Bol 
identity because of $\big [ [{\bf m}, {\bf m}], {\bf m} \big ] \subset 
{\bf m}$ but they  are not Bruck loops since there is no involutory 
automorphism $\sigma :{\bf g} \to {\bf g}$ such that 
$\sigma ({\bf m})=-{\bf m}$ and $\sigma ({\bf h})={\bf h}$.

\medskip
\noindent 
In the remaining  case 10) in Proposition \ref{reductiveslg2} 
the subgroup $H_1$ of ${\bf h}_1$ is the direct product 
$SO_2(\mathbb R) \times SO_2(\mathbb R)$ and the exponential image 
$M_1$ of ${\bf m}_1$ is the 
direct product $M \times M$, where $M$ is the image of the section 
belonging to  
$\mathbb H_2$. 
According to Proposition 1.19 in \cite{loops}, p. 28, the loop $L$ is 
the direct product 
$\mathbb H_2 \times \mathbb H_2$.  
\end{proof}

\medskip
\noindent
Eingegangen am 2. August 2006. 

\medskip
\noindent 
Author address: \\
Mathematisches Institut der Universit\"at 
Erlangen-N\"urnberg, \\   Bismarckstr. 1 $\frac{1}{2}$, \\  
D-91054 Erlangen, Germany; \\ 
and Institute of Mathematics, University of Debrecen, \\ 
 P.O.B. 12, H-4010 Debrecen,  Hungary \\
 figula@mi.uni-erlangen.de;  figula@math.klte.hu

\end{document}